\theoremstyle{plain}
\newtheorem{teo}{Theorem}[section]
\newtheorem{coro}[teo]{Corollary}
\newtheorem{lema}[teo]{Lemma}
\newtheorem{prop}[teo]{Proposition}
\theoremstyle{remark}
\newtheorem{defi}[teo]{Definition}
\newtheorem{remark}[teo]{Remark}
\tikzset{
    set arrow inside/.code={\pgfqkeys{/tikz/arrow inside}{#1}},
    set arrow inside={end/.initial=>, opt/.initial=},
    /pgf/decoration/Mark/.style={
        mark/.expanded=at position #1 with
        {
            \noexpand\arrow[\pgfkeysvalueof{/tikz/arrow inside/opt}]{\pgfkeysvalueof{/tikz/arrow inside/end}}
        }
    },
    arrow inside/.style 2 args={
        set arrow inside={#1},
        postaction={
            decorate,decoration={
                markings,Mark/.list={#2}
            }
        }
    },
}
  \newcounter{constant}
  \newcommand{\newconstant}[1]{\refstepcounter{constant}\label{#1}}
  \newcommand{\uc}[1]{c_{\textnormal{\tiny \ref{#1}}}}
\newcommand{\I}{\mathds{1}} 
\newcommand{\distr}{\ensuremath{\stackrel{\scriptstyle d}{=}}}
\newcommand{\supp}{\ensuremath{\text{\upshape supp }}}
\newcommand{\dist}{\ensuremath{\text{\upshape dist}}}
\newcommand{\Poi}{\text{\upshape Poi}}
\newcommand{\PPP}{\text{\upshape PPP}}
\newcommand{\per}{\text{\upshape per}}
\newcommand{\comp}{{\mathsf c}}
\newcommand{\tI}{\smash{\tilde{I}}}
\newcommand{\ustick}{u^{\mathsf{st}}_c}
\newcommand{\SLE}{\ensuremath{\text{\upshape SLE}}}
\newcommand{\Arm}{\ensuremath{\text{\upshape Arm}}}
\newcommand{\EArm}{\ensuremath{\text{\upshape EArm}}}
\newcommand{\Circ}{\ensuremath{\text{\upshape Circ}}}
\newcommand{\vCirc}{\ensuremath{\overline{\text{\upshape Circ}}}}
\newcommand{\cC}{\ensuremath{\mathcal{C}}}
\newcommand{\cE}{\ensuremath{\mathcal{E}}}
\newcommand{\cF}{\ensuremath{\mathcal{F}}}
\newcommand{\cH}{\ensuremath{\mathcal{H}}}
\newcommand{\cP}{\ensuremath{\mathcal{P}}}
\newcommand{\cQ}{\ensuremath{\mathcal{Q}}}
\newcommand{\cS}{\ensuremath{\mathcal{S}}}
\newcommand{\cT}{\ensuremath{\mathcal{T}}}
\newcommand{\cV}{\ensuremath{\mathcal{V}}}
\newcommand{\EE}{\ensuremath{\mathbb{E}}}
\newcommand{\NN}{\ensuremath{\mathbb{N}}}
\newcommand{\PP}{\ensuremath{\mathbb{P}}}
\newcommand{\RR}{\ensuremath{\mathbb{R}}}
\newcommand{\ZZ}{\ensuremath{\mathbb{Z}}}
\begin{document}

\title{Holder continuity of interfaces for scale-invariant Poisson stick soup}
\author{Augusto Teixeira%
\thanks{
IMPA, Estrada Dona Castorina 110, 22460-320, Rio de Janeiro, RJ, Brazil.
Email: \texttt{augusto@impa.br}}
\quad \qquad
Daniel Ungaretti%
\thanks{
Instituto de Matemática, Universidade Federal do Rio de Janeiro, RJ, Brazil.
Email: \texttt{daniel@im.ufrj.br}}}
\maketitle

\begin{abstract}
We study the interface of covered and vacant sets in the subcritical phase of a
scale-invariant Poisson stick soup on the plane. This model is a natural
candidate for scaling limit of some planar models and has connections with
long-range percolation on the plane with critical parameter $s=4$. We
analyze a family of exploration paths on boxes and prove tightness for this
family and Holder continuity for its limiting measures.

\vspace{0.5cm}\noindent {\it Keywords:} Continuum percolation; Poisson stick
    soup; scale-invariance; exploration paths\\
    \noindent {\it MSC 2020:} 60K35, 82B43, 60G55.
\end{abstract}

\section{Introduction}
\label{sec:Introduction}

Percolation processes are one of the simplest statistical physics models that
present a phase transition. They can be interpreted as models for environments in
which connectivity is assumed to be random and thus provide a natural framework
for modelling phenomena such as the spread of an infection through a
population, of information through wireless networks and of forest fires.
When the underlying environment is discrete, percolation is
usually defined as a probability measure on state configurations of a fixed
infinite graph and when the environment is continuous, a usual approach
is to define the model via a Poisson Point Process.
For background on the more classical Bernoulli percolation
on infinite graphs we refer the reader to
books~\cite{grimmett2010percolation, bollobas2006percolation} and 
a reference for continuum percolation theory
is~\cite{meester1996continuum}.

In this paper we study a very symmetrical continuum percolation model on the
plane, which we call \textit{scale-invariant Poisson stick soup} (IPS).
It is part of a family of random stick soups we call
\textit{scale-homogeneous Poisson stick soup} (HPS). These models are defined
via a Poisson Point Process (PPP) on the space
\begin{equation*}
    S := \RR^{2} \times \RR^{+} \times (-\pi/2, \pi/2],
\end{equation*}
in which a point $s = (z, R, V) \in S$ is interpreted as a segment
(stick) centered at $z$ with length $2R$ and direction $V$, denoted by
$E_0(s)$.
Fixed a parameter $u > 0$ that controls the density of sticks and parameter
$\alpha > 0$ that governs the tails of the radius distribution, we define HPS $\xi$ as a
PPP with intensity measure
\begin{equation}
\label{eq:shpss_measure}
u \mu_{\alpha}
    := u \ \mathrm{d}z \otimes \alpha R^{-(1+\alpha)} \text{d}R \otimes
    \frac{1}{\pi}\mathrm{d}V.
\end{equation}
Taking $\alpha = 2$ corresponds to the IPS with intensity $u$.
In our nomenclature, the term `scale-homogeneous' stresses that
after applying a homothety of ratio $c >0$ to the process we obtain a process
with intensity given by $c^{2 - \alpha}$ times the original intensity, see
Proposition~\ref{prop:homogeneity_relation_PSS}.
Hence, when $\alpha = 2$ applying any homothety to $\xi$ produces a model
with the same law as $\xi$, rendering the model scale-invariant.

Measures $u \mu_{\alpha}$ defining HPS are related to the study of
long-range percolation on $\ZZ^{2}$ (although the notion of connectivity in
these models are different). This can be seen by
reparametrizing the measure $u \mu_{\alpha}$ by using the sticks endpoints
$x, y$ instead of $z, R, V$. In this case, we obtain a measure on
$\RR^4$ with intensity
$\mu'_{\beta, s}(\,\mathrm{d} x \,\mathrm{d} y)
    = \frac{\beta}{|x-y|^s} \,\mathrm{d} x \,\mathrm{d} y$, with $s=2+\alpha$
and $\beta= \frac{2^{\alpha} \alpha}{\pi} u$. This measure can be seen as a
continuous version of long-range percolation on $\ZZ^{2}$ and has played an
important part in some arguments regarding the model,
cf.~\cite{biskup2019sharp} and references therein. Notice IPS, the model
with $\alpha=2$, correponds to the critical value $s=4$ in long-range
percolation.

Every HPS model has sticks of all scales and the sticks are actually
dense on the plane. We refer to the region covered by sticks as 
the \textit{covered} set, $\cE$, and its complement as the
\textit{vacant} set, $\cV$.
Regarding percolation probabilities for $\cE$ and $\cV$, only
IPS behaves in a non-trivial way, due to its scale-invariance. Indeed, we
prove in Proposition~\ref{prop:percolation_when_alpha_neq_2} that
\begin{equation*}
\text{for $\alpha \neq 2$ and any $u > 0$ an HPS satisfies
$\PP(\cE \ \text{percolates}) = 1$ and $\PP(\cV \ \text{percolates}) = 0$.}
\end{equation*}
Scale-invariant planar models are close in spirit to random Cantor sets and
have fractal properties worth investigating. Nacu and
Werner~\cite{nacu2011random} study scale-invariant soups built from
distributions on compact planar curves and their Hausdorff dimension.
One appealing reason for studying scale-invariant models is
that they are natural candidates for scaling limits. Here, by scaling limit
of a planar model we mean a limiting object obtained when we contract space
through a homothety while adjusting the density of the model to obtain a
non-degenerate process. In~\cite{nacu2011random} there is special emphasis
on the \textit{Brownian loop-soup}, an object introduced
in~\cite{lawler2004brownian} that arises as scaling limit of
conformally invariant planar lattice systems.

Our interest on IPS is born from investigating scaling limits of the 
\textit{ellipses model}, a model introduced by the authors
in~\cite{teixeira_ungaretti2017ellipses}. The ellipses model is a
Boolean model in the plane with defects given by heavy-tailed random ellipses.
The ellipses are centered on a Poisson point process of intensity $u > 0$, they
have uniform direction and the size of their minor axis is always equal to one.
However, their major axis has distribution $\rho$ supported on $[1, \infty)$
and satisfying
\begin{equation*}
\newconstant{const:R_decay} 
\uc{const:R_decay}^{-1} r^{-\alpha}
    \le \rho[r,\infty)
    \le \uc{const:R_decay} r^{-\alpha}, \ \text{for every} \ r \geq 1
\end{equation*}
for some positive constant $\uc{const:R_decay}$ and $\alpha > 0$. The behavior
of connection probabilities on these models depends essentially on parameters
$u$ and $\alpha$. Reference~\cite{teixeira_ungaretti2017ellipses} studies 
phase transition for percolation on these models when we vary these parameters.

When performing scaling limits on ellipses model one should expect to obtain
a soup of sticks, but we should specify in which sense we are taking a limit.
One could for instance consider weak convergence for the PPP's on $S$ that
represents the major axis of the ellipses. Alternatively, it might be
interesting to study convergence in Smirnov Schramm
topology~\cite{schramm2011scaling}, see Remark~\ref{rem:smirnov_schramm}.

Our main purpose in this paper is understanding connection properties of
IPS related to `vacant crossings' of boxes of $\RR^2$. For any fixed $u>0$, we
say that the \textit{box-crossing property} holds if for any fixed $k > 0$ and $l > 0$
\begin{equation}
\label{eq:vacant_crossing_bounded_away_IPS}
\delta
    \le \PP\biggl(
        \begin{array}{c}
        \text{there is a left-right crossing of the box} \\
        \text{$[0,kl]\times[0,l]$ that does not `cross' any stick}
        \end{array}
        \biggr)
    \le 1 - \delta,
\end{equation}
where $\delta = \delta(u, k) > 0$. Definition~\ref{defi:cross_segment_by_curve}
makes precise our notion of a curve to `cross' a stick. At an intuitive
level, the curve is allowed to even go along the stick without `crossing' it,
provided that it goes around it.
The existence of vacant crossings is only possible for small $u$ intensities, characterized by
\begin{prop}[Nacu and Werner~\cite{nacu2011random}]
\label{prop:vacant_crossing_boxes_IPS}
It holds that
\begin{equation*}
\label{eq:defi_bar_u}
\bar{u}
    := \sup \{u; 
        \text{$\xi$ with intensity $u$ satisfies the box-crossing
        property~\eqref{eq:vacant_crossing_bounded_away_IPS}}\} > 0.
\end{equation*}
Moreover, for $u \in (0, \bar{u})$ we have:
\begin{equation}
\label{eq:cV_and_cE_not_percolate_IPS}
\PP[\text{neither $\cV$ nor $\cE$ percolate}] = 1.
\end{equation}
\end{prop}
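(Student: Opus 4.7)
Throughout I exploit scale invariance: by Proposition~\ref{prop:homogeneity_relation_PSS} at $\alpha=2$, the homothety $z\mapsto z/l$ sends the IPS of intensity $u$ to itself in distribution and maps $[0,kl]\times[0,l]$ to $[0,k]\times[0,1]$. Writing $p(u,k):=\PP[\text{there is a vacant left-right crossing of }[0,k]\times[0,1]]$, the probability in~\eqref{eq:vacant_crossing_bounded_away_IPS} equals $p(u,k)$ for every $l>0$, so it suffices to produce $u>0$ with $0<p(u,k)<1$ for every $k>0$. The upper bound is soft and valid for every $u>0$: a stick whose supporting segment has both $x$-coordinates of its endpoints in $(0,k)$ while one endpoint has $y<0$ and the other has $y>1$ separates $[0,k]\times[0,1]$ into a left and a right component and must be crossed in the strict sense of Definition~\ref{defi:cross_segment_by_curve} by every candidate curve. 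Integrating~\eqref{eq:shpss_measure} over such parameters produces a mass proportional to $uk$, and hence $p(u,k)\le e^{-cuk}<1$.

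\textbf{Lower bound $p(u,k)>0$ for small $u$.} This is the heart of the proof. I would split $\xi$ dyadically by radius into independent Poisson processes $\xi_j:=\xi\cap\{R\in[2^j,2^{j+1})\}$ and handle three ranges separately. Summing the upper-bound computation over $j\ge J$ shows that, for $u$ small, no stick of radius $\ge 2^J$ meets $[0,k]\times[0,1]$ with probability close to $1$. At the opposite end, sticks of radius $\le 2^{-J'}$ are short enough that the topological leniency of Definition~\ref{defi:cross_segment_by_curve} (a curve may touch and detour around a stick without strictly crossing it) lets a fixed nearly-straight candidate curve be locally deformed to avoid each of them. For the finitely many remaining intermediate scales one has a Boolean-type model whose total intensity on the rectangle is of order $u$, and for $u$ small a Peierls contour argument on a fine grid rules out a top-to-bottom chain of such blocking sticks. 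Combining the three estimates yields $p(u,k)\ge\delta(u,k)>0$.

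\textbf{Non-percolation for $u<\bar u$; main obstacle.} Fix $u<\bar u$. The IPS is translation invariant and ergodic. If $\cV$ percolated, standard 2D arguments (Burton-Keane uniqueness combined with FKG for decreasing events and ergodicity) show that the vacant crossing probability of a square $[0,L]\times[0,L]$ tends to $1$ as $L\to\infty$; by scale invariance this probability is independent of $L$, forcing $p(u,1)=1$ and contradicting the upper bound $p(u,1)\le 1-\delta$. For $\cE$, a left-right covered crossing of $[0,k]\times[0,1]$ topologically blocks every top-to-bottom vacant crossing of the same rectangle; by scale invariance applied to the rotated rectangle and the lower bound in~\eqref{eq:vacant_crossing_bounded_away_IPS}, top-to-bottom vacant crossings have probability at least some $\delta>0$ uniformly, so left-right covered crossings have probability at most $1-\delta$ uniformly, and the same ergodic argument excludes $\cE$ from percolating. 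The principal obstacle is the lower bound: because $\int_0^\infty R^{-3}\,\mathrm{d}R$ diverges at both ends, $\mu_2$ is infinite on every open bounded portion of $S$ and $\xi$ a.s.\ deposits infinitely many sticks in every rectangle, so no classical Boolean-model argument applies directly. The topological leniency of Definition~\ref{defi:cross_segment_by_curve} is the essential ingredient permitting the multiscale reduction.
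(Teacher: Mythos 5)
Your reduction by scale invariance, your upper bound via a single blocking stick spanning the rectangle (this is exactly the paper's use of $\PP(LR_1)$ from Proposition~\ref{prop:crossing_box_with_one_stick}), and the broad shape of the non-percolation step are reasonable, although the paper obtains non-percolation of $\cV$ far more cheaply for \emph{every} $u>0$ (Borel--Cantelli applied to circuits of four long sticks in the annuli $B(3^{n+1})\setminus B(3^n)$, Lemma~\ref{lema:NW_properties}) and handles $\cE$ via vacant circuits in annuli at scales $3^n$ plus Fatou's lemma and ergodicity, without needing Burton--Keane uniqueness or a square-root trick.

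The genuine gap is your treatment of the small sticks in the lower bound, which you correctly identify as the heart of the proof. You propose to fix $J'$ and dispose of all sticks of radius $\le 2^{-J'}$ by locally deforming a fixed nearly-straight candidate curve around each of them. This cannot work: for every $J'$ the sticks of radius $\le 2^{-J'}$ still have infinite $\mu_2$-mass on any set of positive area (Proposition~\ref{prop:intersection_computations}), they are dense in the rectangle and occur at all scales below $2^{-J'}$, and for $u$ large their union already percolates and blocks every crossing --- so no argument that avoids them deterministically, uniformly in $u$, can be correct. Even when each scale is individually sparse, the successive deformations produce a sequence of curves whose limit need not exist as a curve, which is precisely the topologist's-sine-curve obstruction of Remark~\ref{rem:topological_issue}. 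What is needed is a genuine multiscale percolation input: the paper partitions $B$ into dyadic squares at each scale $n$, uses Liggett--Schonmann--Stacey to dominate from below the field of indicators ``no stick of radius in $(2^{-(n+1)},2^{-n}]$ meets this square'' by an i.i.d.\ field with parameter $\beta(u)\to 1$ as $u\to 0$, and invokes the Chayes--Chayes--Durrett theorem that the resulting fractal percolation set crosses $B$ with positive probability once $\beta(u)$ is close enough to $1$; it then passes from $\cap_{r>0}\overline{LR}_r(B)$ to $\overline{LR}_0(B)$ via Proposition~\ref{prop:LR0_measurable}, which itself rests on the tightness machinery of Section~\ref{sec:Holder_regularity_exploration_paths}. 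Your sketch is missing an ingredient of this kind, and without it the lower bound does not close.
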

Proposition~\ref{prop:vacant_crossing_boxes_IPS} is a reformulation
of results from \cite{nacu2011random}. For convenience of the reader, a proof
is provided in Section~\ref{sub:percolation_on_IPS}. It shows that IPS has some kind of
critical behavior in interval $(0,\bar{u})$. Our work investigates if
the model also presents a critical geometry.
Notice that it is not clear that the existence of such `vacant crossings' is
well-defined since the fact that small sticks are dense in the plane may raise
measurability issues. Also, some subtle topological issues have to be
considered, see Remark~\ref{rem:topological_issue}.  In
Section~\ref{sec:exploration_paths_IPS_ellipses} we give a precise definition
to such crossings via a natural family of exploration paths $\{\gamma_r^0\}_{r
> 0}$, see Figure~\ref{fig:example_exp_path}.
\begin{figure}
\centering
\begin{tikzpicture}[scale = 2.5,
   occupied/.style={pattern=north east lines,pattern color=gray!80}
]

\draw           (-2,-1  ) rectangle (2, 1);
\fill[occupied] (-2,-1.3) rectangle (2,-1);
\draw[dashed]   (-2.3,-1) rectangle (-2,1);

 \node at (0,-1.15) {Covered};
 \node[rotate = 90] at (-2.1,0) {Vacant};

\clip (-2,-1  ) rectangle (2, 1);

\draw[name path = s1, rotate around={ 50:(-1.1, -1.1)}] (-1.1,-1.1) circle (1 and 0);
\draw[name path = s2, rotate around={-10:(-1.1, -0.5)}] (-1.1,-0.5) circle (1 and 0);
\draw[name path = s3, rotate around={ 15:(-1.7, -0.5)}] (-1.7,-0.5) circle (.7 and 0);
\draw[name path = s4, rotate around={ 90:(-1.1, -0.1)}] (-1.1,-0.1) circle (.3 and 0);
\draw[name path = s5, rotate around={  5:(   0,  0.2)}] (   0, 0.2) circle (1.2 and 0);
\draw[name path = s6, rotate around={100:( 1.2,  0.2)}] ( 1.2, 0.2) circle (.6 and 0);
\draw[name path = s7, rotate around={  3:( 1.6, -0.2)}] ( 1.6,-0.2) circle (.6 and 0);

\draw[line width=.8mm] (-2,-1) -- (-1,-1) -- ++(50:.53) --
    ++(170:.83) -- (-2,-.6); 
\draw[line width=.8mm] (-2,-.33) -- ++(-10:.54) -- ++(15:.4) --
    ++(90:.42) -- ++(185:.12) -- ++(5:.12) -- ++(90:.1) --
    ++(-90:.1) -- ++(5:2.28) -- ++(100:.51) --
    ++(-80:1.03) -- (2,-.18); 

\draw[rotate around={110:(1,-1)}] ( 1,-1) circle (1 and 0);
\draw[rotate around={60:(.7,-1)}] (.7,-1) circle (.6 and 0);
\draw[rotate around={-6:(-1.5,.6)}] (-1.5,.6) circle (.4 and 0);
\end{tikzpicture}
\caption{Depiction of a random curve from family $\{\gamma_r^0\}_{r > 0}$. For
    $r>0$ fixed, only finitely many sticks from $\cE$ intersect the box.
    The exploration path $\gamma_r^0$ starts at the bottom-left corner and follows the
    vacant/covered interface without `crossing' sticks, until reaching either the
    right or top sides. As $r \to 0$ more sticks are added to the picture and
    the exploration path is updated. See
    Section~\ref{sec:exploration_paths_IPS_ellipses} for a precise
    definition.}
\label{fig:example_exp_path}
\end{figure}
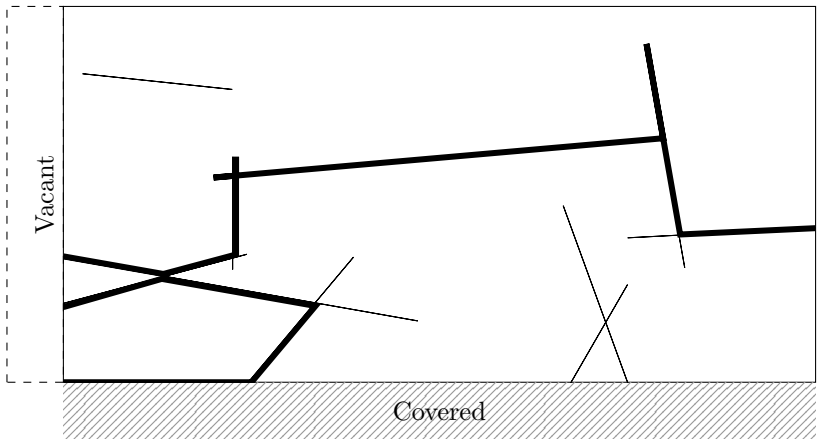

Exploration processes are quite common in the literature and have great
importance in percolation theory. They appear in Russo Seymour Welsh
theory~\cite{russo1978note, seymour1978percolation}, in the study of
noise sensitivity~\cite{garban2014noise} and also on the proof that the
interface of critical percolation on the triangular lattice converges to
$\SLE_6$~\cite{camia2007critical}.

The exploration paths $\{\gamma^0_r\}_{r > 0}$ that we use reveal the
interface between covered and vacant regions of approximations of
IPS that get increasingly better as $r \to 0$. Our main contribution to
understanding exploration paths $\{\gamma^0_r\}$ is
\begin{teo}
\label{teo:tight_regularity_exploration_paths}
Let $u \in (0, \bar{u})$ and consider an IPS $\xi$ of intensity $u$ and
a box $B \subset \RR^2$. The family of exploration curves
$\{\gamma^0_r\}_{r > 0}$ in box $B$ is tight and there is $c(u) > 0$
such that any subsequential limit law in the space of curves is
supported on Holder continuous curves with upper-box dimension and
Hausdorff dimension in the interval $(1, 2-c(u))$.
\end{teo}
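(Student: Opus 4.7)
The plan is to invoke the Aizenman--Burchard regularity framework, which from suitable polynomial bounds on $k$-arm events in annuli yields both tightness in H\"older spaces and dimension bounds for subsequential limits. Concretely, if one can exhibit $k\in\NN$ and $\eta>0$ such that, uniformly in $r$, in the centre $x$, and in the scales $0<\rho<R$, the probability that $\gamma_r^0$ makes $k$ disjoint crossings of the annulus $A(x;\rho,R)$ is at most $C(\rho/R)^{2+\eta}$, then the Aizenman--Burchard criterion will output the desired tightness and the upper bound on the dimensions of any subsequential limit.

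First, I would upgrade Proposition~\ref{prop:vacant_crossing_boxes_IPS} to a full Russo--Seymour--Welsh theorem for IPS. Scale- and rotation-invariance of the model make the vacant (and the covered) crossing probability of any $k\times 1$ rectangle depend only on $k$ and $u$; a standard RSW gluing using the FKG inequality for Poisson processes (applied to the decreasing events ``no stick crosses a given region'' and the increasing events ``existence of a covered path'') then gives crossings of rectangles of arbitrary aspect ratio, and, more importantly, circuits in annuli of any fixed modulus with probability bounded below by a constant $\delta(u)>0$.

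Second, I would deduce polynomial decay of $k$-arm probabilities. Between scales $\rho$ and $R$ one can fit of order $\log_{2}(R/\rho)$ disjoint concentric annuli; in each such annulus a vacant circuit blocks any covered arm from inside to outside, and conversely. The delicate point is independence across scales: one must decompose the PPP according to the stick scale (sticks of length comparable to that of the annulus), prove via the scale-homogeneity relation (Proposition~\ref{prop:homogeneity_relation_PSS}) that the contribution of sticks of very different scales to the crossing event is negligible, and patch the estimates across all scales. This yields that the probability of $k$ alternating arms from $\rho$ to $R$ is at most $C(\rho/R)^{\eta(k)}$ with $\eta(k)\to\infty$ as $k\to\infty$. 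Taking $k$ large enough so that $\eta(k)>2$ verifies the Aizenman--Burchard hypothesis and gives the H\"older tightness, an $\alpha(u)$-H\"older modulus for any subsequential limit, and the upper bound $2-c(u)$ on the upper-box (hence Hausdorff) dimension. For the lower bound $\dim>1$, I would use the matching one-arm lower estimate produced by RSW, $\PP(\text{one arm from }\rho\text{ to }R)\ge c(\rho/R)^{\beta}$ with $\beta<1$, and a Frostman/second-moment argument on the number of $\rho$-boxes met by the curve to force strictly more than $1/\rho$ such boxes.

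The main obstacle I expect is the independence-across-scales step in the $k$-arm bound. Unlike Bernoulli percolation, one cannot rely on disjoint blocks of independent edges: the Poisson stick soup couples every scale through a single PPP and in particular has sticks of all sizes simultaneously dense in the plane. One must therefore truncate the PPP scale-by-scale, quantify the defect introduced by the truncation using the scale-homogeneity, and check that the arm estimates remain \emph{uniform in $r$} so that the hypothesis is satisfied by the pre-limiting family rather than by a still-undefined limit. Additional care is needed to handle the topological subtleties mentioned in Remark~\ref{rem:topological_issue} when arguing that crossings of the curve $\gamma_r^0$ really correspond to macroscopic vacant/covered arms in the underlying configuration.
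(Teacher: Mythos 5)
Your overall strategy (verify the Aizenman--Burchard crossing hypothesis for the pre-limiting family, uniformly in $r$, then read off tightness and the dimension bounds) is the same as the paper's, but the core technical step is missing, and the place where you locate the difficulty is exactly where your proposed fix fails. You plan to get the $k$-arm bound by stacking $\log_2(R/\rho)$ dyadic annuli, each carrying a blocking vacant circuit with probability $\ge\delta(u)$, and to recover independence across scales by truncating the PPP scale-by-scale and arguing that sticks of very different scales contribute negligibly. For IPS ($\alpha=2$) this does not work: by Lemma~\ref{lema:crossing_annulus}--\ref{lema:decay_of_correlations_HPS} the correlation between the annuli $A_i=D(2^{i-1},2^i)$ and $A_j$ decays only like $2^{-|i-j|}$, so any \emph{deterministic} sparse subfamily of annuli yields at best of order $\sqrt{m}$ effectively decoupled trials and a bound of the form $K2^{-\eta\sqrt{m}}$ --- a stretched exponential, not the power law $K2^{-\eta m}$ needed for \textbf{H1}. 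The paper points this out explicitly and resolves it with a genuinely different device: a \emph{random} set of indices $I_0=m$, $I_j=D_{I_{j-1}}$, obtained by exploring annuli from the outside in and skipping every annulus touched by an already-revealed stick. Conditionally on the exploration, each event $\vCirc_0(A_{I_j})^{\comp}$ has probability at most $1-\delta$ (Lemma~\ref{lema:up_bound_cap_random_vacant_circuits}), and a separate stochastic-domination and large-deviation argument (Lemmas~\ref{lema:stochastic_domination_for_L} and~\ref{lema:exponential_Markov_for_Lj}) shows the number of surviving trials is linear in $m$ with high probability. Without this (or an equivalent idea), your Proposition~\ref{prop:up_bound_arm_IPS}-type estimate, and hence \textbf{H1}, does not follow.

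A second gap is the passage from ``$k$ separate traversals of the annulus by $\gamma_r^0$'' to ``$ck$ \emph{disjointly occurring} covered arms'' needed to apply the BK inequality (Corollary~\ref{coro:bk_ineq_for_arm_events}). You flag this as a ``topological subtlety'' but give no mechanism; it is not minor, because distinct segments of the exploration path can use the same stick, which defeats a naive disjoint-occurrence count. The paper handles it with the topological properties \textbf{P1}--\textbf{P3} of Lemma~\ref{lema:topological_properties} (a stick can serve at most two entering arms, and only if it meets the inner circle twice), together with the fact that the number of sticks meeting $\partial B(l)$ twice is Poisson of mean $2\pi u$ (Proposition~\ref{prop:sticks_cap_boundary_ball}) and the counting estimate of Lemma~\ref{lema:hypothesis_H1_part2}. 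Finally, for the lower bound $\dim_{\cH}>1$ you propose an RSW one-arm lower bound plus a Frostman/second-moment argument; the paper instead verifies Property~($\varnothing$) of Basdevant et al.\ (Proposition~\ref{prop:property_varnothing}) by surrounding each of $n$ well-separated balls with a four-stick circuit, giving an exponentially small probability of hitting all of them. Your route is not obviously wrong, but it is only sketched and is a different (and less elementary, in this setting) argument than the one the paper uses.
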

For a more precise statement, see
Theorem~\ref{teo:tight_regularity_exploration_paths_2}. All properties in
Theorem~\ref{teo:tight_regularity_exploration_paths} are consequences of
previous results by Aizenman and Burchard~\cite{aizenman1999holder}.
The proof of the lower bound on Hausdorff dimension is based on
\textit{Property ($\varnothing$)}, a concept introduced by Basdevant, Blanc,
Curien and Singh~\cite{basdevant} which is inspired in Theorem 1.3
of~\cite[]{aizenman1999holder}.
For the upper bound $2-c(u)$, we prove that our family of exploration paths 
satisfy a property called \textit{Hypothesis H1} in \cite{aizenman1999holder},
which we abbreviate to \textbf{H1}. Intuitively, it quantifies how erratic
the family of curves $\{\gamma^{0}_r\}$ can be by estimating the probability
that the curve $\gamma^{0}_r$ traverses an annulus many times. 
\begin{teo}[\textbf{H1} for exploration paths]
\label{teo:hypothesis_H1_IPS}
Let $u \in (0, \bar{u})$.
For all $k \in \NN$ and $0 < l_1 < l_2$ we have uniformly in
$r > 0$ and $z \in \RR^2$ that
\begin{equation}
\label{eq:hypothesis_H1_IPS}%
\PP\biggl( 
    \begin{array}{l}
    \text{Annulus $\{w; l_1 < |w-z| \le l_2\}$ is traversed} \\
	\text{by $k$ separate segments of curve $\gamma^{0}_r$}
    \end{array}
\biggr)
    \le K_k \left(\frac{l_1}{l_2}\right)^{c(u)k}
\end{equation}
for positive constants $K_k(u) < \infty$ and $c(u) > 0$.
\end{teo}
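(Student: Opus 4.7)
The approach is a scale-invariant version of the classical RSW iteration. By the $\alpha = 2$ case of Proposition~\ref{prop:homogeneity_relation_PSS}, IPS is invariant under homotheties and the family $\{\gamma^0_r\}$ transforms compatibly, so applying the homothety of ratio $1/l_2$ centered at $z$ we may assume $z = 0$, $l_2 = 1$, and write $\rho := l_1 \in (0,1)$. Partition the annulus $A := \{\rho < |w| \le 1\}$ into $N := \lfloor \log_2(1/\rho) \rfloor$ disjoint aspect-ratio-$2$ sub-annuli $A^{(j)} := \{2^{-j-1} < |w| \le 2^{-j}\}$. Any curve $\gamma^0_r$ that traverses $A$ by $k$ disjoint segments also crosses each $A^{(j)}$ by at least $k$ disjoint arms, so the event of interest is contained in $\bigcap_{j=0}^{N-1}\{A^{(j)} \text{ is crossed by at least } k \text{ arms of } \gamma^0_r\}$.

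The core one-scale estimate is the following: there exists $p = p(u) \in (0,1)$ such that for every $j$ and every $k \ge 1$,
\begin{equation*}
\PP\bigl[A^{(j)} \text{ has } \ge k \text{ arms of } \gamma^0_r \mid \cF_j^{\mathrm{ext}}\bigr] \le p(u)^{k},
\end{equation*}
where $\cF_j^{\mathrm{ext}}$ encodes the restriction of $\xi$ to sticks with center outside $A^{(j)}$. This is proved by the standard RSW iteration on $k$: given $k$ arms, the presence of a $(k{+}1)$-th arm is ruled out by the existence of a monochromatic (vacant or covered) circuit in $A^{(j)}$ winding around the inner boundary and disjoint from the existing arms. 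The box-crossing property \eqref{eq:vacant_crossing_bounded_away_IPS}, together with an RSW gluing of rectangle crossings into such circuits (in the spirit of Proposition~\ref{prop:vacant_crossing_boxes_IPS}), provides a uniform positive conditional probability $\delta(u) > 0$ for such a blocking circuit, so one may take $p := 1-\delta(u)$. Multiplying the one-scale estimates across the $N$ dyadic sub-annuli, working outer-to-inner so that the conditioning is incorporated at each step, yields
\begin{equation*}
\PP[k \text{ separate traversals of } A] \le K_k\, p(u)^{Nk} \le K_k\, \rho^{\,c(u) k},\qquad c(u) := -\log_2 p(u) > 0,
\end{equation*}
which is precisely \eqref{eq:hypothesis_H1_IPS}.

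The principal obstacle is that IPS is non-local: a single long stick can intersect many sub-annuli simultaneously, so the $A^{(j)}$-events are not independent and the one-scale estimate must hold uniformly in $\cF_j^{\mathrm{ext}}$. This is handled by decomposing the Poisson intensity according to stick length. Scale invariance together with the $R^{-3}\,\mathrm{d}R$ radius density imply that only $O(1)$ sticks per dyadic length scale are expected to intersect $A^{(j)}$, so after conditioning on the (finitely many) sticks whose length exceeds the diameter of $A^{(j)}$ the remainder is a PPP of short sticks, to which the FKG-based RSW circuit construction applies essentially unchanged. This robustification of the RSW framework to the long-range stick-soup setting is the main technical input beyond \cite{aizenman1999holder}; once it is in place, scale invariance and the dyadic iteration deliver the conclusion.
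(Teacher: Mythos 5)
Your overall architecture (reduce to $z=0$, $l_2=1$ by scale invariance, decompose into dyadic sub-annuli, beat the dependence coming from long sticks) matches the spirit of the paper, but the proof hinges on a one-scale estimate that is neither proved nor provable as stated, and it skips the two difficulties that the paper identifies as the actual content of Theorem~\ref{teo:hypothesis_H1_IPS}. First, the claimed bound $\PP[A^{(j)} \text{ has } \ge k \text{ arms of } \gamma^0_r \mid \cF_j^{\mathrm{ext}}] \le p(u)^{k}$ cannot hold uniformly in the conditioning: sticks centered outside $A^{(j)}$ can cross $A^{(j)}$ entirely, and an exterior configuration containing many such crossing sticks (suitably hooked into the rest of the soup) can force many traversals of the exploration path, pushing the conditional probability toward $1$. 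Your remedy --- ``condition on the finitely many long sticks and treat the rest as a short-stick PPP'' --- only names the problem; it does not produce a bound uniform over those conditionings. The paper instead splits off this pathology quantitatively: it tracks $N_r(j)$, the number of sticks hitting $\partial B(2^j)$ twice (which is $\Poi(2\pi u)$ by Proposition~\ref{prop:sticks_cap_boundary_ball}), waits for the first scale $J$ where $N_r(J) \le k/4$, and separately shows via an exponential-moment computation (Lemma~\ref{lema:hypothesis_H1_part2}) that $J > m/2$ has probability $\le K_k e^{-cmk}$.

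Second, your ``RSW iteration on $k$'' (each additional arm ruled out by a blocking circuit disjoint from the existing arms) silently assumes that distinct traversals of $\gamma^0_r$ can be treated as disjointly witnessed events. They cannot: the paper's Discussion explicitly flags that different segments of the curve may share the same stick, which is why the passage from $k$ traversals to disjoint covered arms requires the topological Lemma~\ref{lema:topological_properties} (property \textbf{P3}: a stick participates in at most two entering arms, and in two only if it meets the inner circle twice) combined with the control on $N_r(j)$ above; only after discarding at most $k/4$ shared arms does one obtain $\ge k/4-2$ covered arms using pairwise disjoint stick sets, to which the BK inequality (Corollary~\ref{coro:bk_ineq_for_arm_events}) applies. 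Finally, the exponent in $m$ does not come from a per-scale estimate at all in the paper: it comes from the covered $1$-arm bound of Proposition~\ref{prop:up_bound_arm_IPS}, whose own dyadic argument handles inter-annulus dependence not by ``incorporating the conditioning outer-to-inner'' over consecutive annuli (adjacent dyadic annuli share sticks, so no such conditional estimate is available) but by adaptively skipping to the next annulus untouched by previously explored sticks (the random indices $I_j$) and showing one still gets linearly many independent attempts. Your proposal conflates these two separate mechanisms (BK in $k$, random-scale selection in $m$) into a single unproven conditional estimate, so as written the argument has a genuine gap.
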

Reference~\cite{aizenman1999holder} relates \textbf{H1} to tightness of
the family of curves and regularity of their possible limits.
Given that Theorem~\ref{teo:hypothesis_H1_IPS} holds, the upper bound in
Theorem~\ref{teo:tight_regularity_exploration_paths} is a straightforward
consequence of Theorems~1.1 and 1.2 from~\cite{aizenman1999holder}.

\subsection{Previous results}
\label{sub:previous_results}

Nacu and Werner~\cite{nacu2011random} studied a class of Poissonian translation and
scale-invariant models in the plane that include IPS. In order to motivate and
contextualize our results, we make a brief summary of known properties of IPS.

Let $B(l)$ be an (euclidean) ball of radius $l$. IPS is a \textit{thin soup} as defined in~\cite{nacu2011random},
meaning that
\begin{equation*}
\mu_2(s; R \geq r, E_0(s) \cap B(1) \neq \varnothing) < \infty
\quad \text{for every $r>0$.}
\end{equation*}
There, they prove that for any fixed annulus in $\RR^2$ only a
finite number of sticks in $\xi$ intersect both its internal and external
boundaries. These results for IPS can be seen as particular cases of our
Proposition~\ref{prop:intersection_computations} and
Lemma~\ref{lema:crossing_annulus}, that consider HPS models in general.

About percolation of the covered set, they prove that there is a critical
parameter $u_c \in (0, \infty)$ that separates two distinct regimes:
\begin{description}
\item[Subcritical phase.] For $0 < u < u_c$ the soup is composed of
    bounded covered clusters;
\item[Supercritical phase.] For $u > u_c$ there is a unique and dense
    unbounded covered cluster.
\end{description}
Actually, their argument is phrased for random curves with positive inner area
instead of IPS, but essentially the same argument works for IPS, see
Lemma~\ref{lema:NW_properties}. They also introduce the
\textit{Subcriticality Assumption 2}, which we denote \textbf{SA2} for short.
Assumption \textbf{SA2} means that with positive probability, there
exists a (random) closed loop $l$ in the plane that surrounds the origin and
does not `cross' any sticks of the full soup.

Nacu and Werner also proved that IPS with small densities satisfy \textbf{SA2},
by claiming that the same coupling with fractal percolation present
in~\cite{sheffield2012conformal} works. This is essentially the content of
Proposition~\ref{prop:vacant_crossing_boxes_IPS}, since an application of FKG
inequality implies \textbf{SA2} holds for $u \in (0, \bar{u})$ and hence
$\bar{u} \le u_c$. Another easy comparison is $u_c \le \ustick$, where
$\ustick$ is the critical point of a Poisson stick soup with sticks of radius
1.  In~\cite{nacu2011random} it is argued that $\bar{u} = u_c$ should hold for
most cases of interest, including the Brownian loop soup. However, this is not
clear for IPS since it has a different notion of `crossing' loops of the soup,
see Definition~\ref{defi:cross_segment_by_curve}.

The main result from~\cite{nacu2011random} is about the Hausdorff dimension of
a set they call \textit{carpet}, under \textbf{SA2}. For any simply connected domain
$D \subset \RR^2$ define the random soup in $D$, denoted by $\Gamma_D$, as the
random sticks of $\xi$ that are contained in $D$. For $D \neq \RR^2$, the
\textit{carpet} $G$ is defined as the set of points $z \in D$ such that for any
neighborhood of $z$ there is a path connecting it to $\partial D$ that does not
`cross' any stick of $\Gamma_D$.

We emphasize that when analyzing IPS in region $D$, Nacu and Werner
consider only sticks in $\Gamma_D$. This is a very important difference to our
analysis of exploration paths, since we cannot disregard sticks intersecting
the boundary of our regions, as discussed in Section~\ref{sub:discussion}.
Under \textbf{SA2}, Nacu and Werner were able to estimate the probability of
a vacant 1-arm event. Let $D(l_1,l_2) := \{z \in \RR^2; l_1 < |z| \le l_2\}$
and consider event $A_{\epsilon}$ in which there is a path joining the
inner and outer boundaries of $D(\epsilon, 1)$ that do not `cross' any
sticks of $\Gamma_{D(\epsilon, 1)}$. Their Corollary~8 proves there are
constants $\bar{\eta}(u)$ and $k'(u)$ such that
\begin{equation}
\label{eq:nacu_poly_bound}
\epsilon^{\bar{\eta}} \le \PP(A_\epsilon) \le k'\epsilon^{\bar{\eta}},
\end{equation}
a power law for $\PP(A_\epsilon)$. This vacant 1-arm estimate is
related to the Hausdorff dimension of the carpet. If $D$ is a bounded
non-empty open domain then its carpet $G$ has Hausdorff dimension
$2 - \bar{\eta}$ almost surely. Finally, they study how
$\dim_{\cH}(G)$ behaves as $u$ decreases to zero, obtaining that
$\dim_{\cH}(G) = 2 - o(u)$.

\begin{remark}
\label{remark:Hausdorff_corollary}
It is worth mentioning that exploration paths of a box are contained in its
carpet, so the conclusion in
Theorem~\ref{teo:tight_regularity_exploration_paths} that limiting subsequential
curves have Hausdorff dimension smaller than $2-c(u)$ can be seen as a corollary
of Nacu and Werner. However, tightness of the sequence of curves and estimates
on their Holder continuity are not a consequence of~\cite{nacu2011random}, see
Section~\ref{sub:discussion}.
\end{remark}

\subsection{Discussion and Idea of Proofs}
\label{sub:discussion}

Our main results, Theorems~\ref{teo:tight_regularity_exploration_paths}
and~\ref{teo:hypothesis_H1_IPS}, have a similar flavor
to the ones just described. Denote by $C_\epsilon$ the event in which
there is a finite sequence of sticks of the full soup connecting the
internal and external boundaries of annulus $D(\epsilon,1)$, or in
other words, in $C_\epsilon$ we have a covered 1-arm. The estimate \textbf{H1} in
Theorem~\ref{teo:hypothesis_H1_IPS} is based on a polynomial decay estimate
for $\PP(C_\epsilon)$: there are constants $K(u)$ and $\eta(u)$ such that
\begin{equation}
\label{eq:C_eps_poly_bound}
\PP(C_\epsilon) \le K\epsilon^{\eta},
\end{equation}
see Proposition~\ref{prop:up_bound_arm_IPS}.
We emphasize that both events $A_{\epsilon}$ and $C_{\epsilon}$ search for
paths that cross the annulus $D(\epsilon,1)$, but are different in nature.
The event $A_{\epsilon}$ concerns paths that may even intersect sticks from
$\Gamma_{D(\epsilon,1)}$, but do not `cross' them, while $C_{\epsilon}$
concerns paths entirely contained in the covered set.

While the lower bound in~\eqref{eq:nacu_poly_bound} can be adapted for
$C_{\epsilon}$ (see Lemma~\ref{lema:lbound_power_law_C_epsilon}), the estimates
in~\eqref{eq:C_eps_poly_bound} and~\eqref{eq:hypothesis_H1_IPS} rely on new
ideas to overcome two extra difficulties: the lack of independence and the
application of BK inequality.

\medskip
\noindent
\textbf{Lack of independence.}
Events $A_{\epsilon}$ look for paths that do not cross sticks in
$\Gamma_{D(\epsilon,1)}$, disregarding boundary effects of sticks intersecting 
$\partial D(\epsilon,1)$. The upper bound in~\eqref{eq:nacu_poly_bound} follows
from independence of finding paths that do not cross sticks in
$D(\epsilon\epsilon', \epsilon)$ and in $D(\epsilon, 1)$.
The same argument does not apply for covered 1-arms, since paths of
sticks in the two annuli above must share sticks.

To overcome this lack of independence we decompose annulus $D(1, 2^{m})$ into
annuli of the form $D(2^{j-1}, 2^{j})$. If there is a 1-arm covered crossing,
then none of these annuli can contain a circuit around the origin that do not
cross sticks of the soup, and such events have probability bounded away from 1.
These events are also not independent, but independence can be enforced by
considering an appropriate (random) subset of indices $j$ obtained by exploring
these annuli from the outermost to the innermost.
Estimate~\eqref{eq:C_eps_poly_bound} follows from the fact that both events
`there are many independent annuli without a circuit around the origin' and
`there are too few attempts' have small probability. This is the content of
Proposition~\ref{prop:up_bound_arm_IPS} in
Section~\ref{sub:1_arm_events_for_sipss}.

\begin{remark}
We are not able to fully recover a power law for $\PP(C_{\epsilon})$ with
matching exponents, as it holds for $\PP(A_{\epsilon})$
in~\eqref{eq:nacu_poly_bound}. However, comparing IPS with other percolation
models in general, it is quite plausible that the covered and vacant sets of IPS
have different behaviors since the model does not present a clear symmetry
between covered and vacant sets. As a close example of this lack of
symmetry in continuum percolation, we can cite
reference~\cite{ahlberg2018sharpness}. Their Corollary~4.4 presents bounds
for the probability of covered and vacant arm events in Boolean models on
the plane that are unavoidably different. For some radius distributions,
the probability of having a vacant arm decays polynomially but the decay
for covered arms is slower.
\end{remark}

\medskip
\noindent
\textbf{BK inequality.}
The covered 1-arm estimate from Proposition~\ref{prop:up_bound_arm_IPS}
is an important tool to prove that our family of exploration curves
satisfies \textbf{H1}, but is not enough. Since our exploration
paths `follow' sticks, it is natural to expect that proving \textbf{H1} could
follow these arguments:
\begin{equation}\begin{array}{c}
\label{disp:exploration_arms}    
\text{ if one has many separate segments of curve $\gamma^{0}_r$ traversing the
    same annulus then}\\
\text{there are many covered arms crossing the annulus that use disjoint sets of sticks}
\end{array}\end{equation}
and then apply BK inequality to obtain \textbf{H1}, a standard inequality for
bounding the disjoint occurrence of events (see Section~\ref{sub:bk_inequality}).
Unfortunately, the possibility that different segments of the curve
share a same stick makes the reasoning
behind~\eqref{disp:exploration_arms} more convoluted. Again, this does not play
a role in estimate~\eqref{eq:nacu_poly_bound} from~\cite{nacu2011random}, since
$\Gamma_{D(\epsilon \epsilon', \epsilon)}$ and $\Gamma_{D(\epsilon, 1)}$ do not
share sticks.
We are indeed able to conclude that~\eqref{disp:exploration_arms} holds, but to
do so we rely on topological properties of our exploration paths (see
Proposition~\ref{lema:topological_properties}) and the fact that the number of
sticks of the soup that intersect $\{z; |z| = 1\}$ in two different points is
finite a.s., see Proposition~\ref{prop:sticks_cap_boundary_ball}.

\medskip
\noindent
\textbf{Open problems.}
This work allows for various extensions and questions. We expect that ellipses model
converges to IPS on Smirnov Schramm topology. Another interesting result
would be to prove that the probability of having a covered 1-arm event,
$\PP(C_{\epsilon})$, has a power law decay with some explicit exponent, and if
this is the case, how the exponents of vacant and covered 1-arms are related.
It also could be interesting to ensure that family $\{\gamma_r^r\}_{0 < r \le 1}$
satisfies \textbf{H1} (see
Section~\ref{sec:exploration_paths_IPS_ellipses}). Possibly
the same argument used for $\{\gamma_r^0\}_{0 < r \le 1}$ could be adapted
to this other family of curves. However, we are still not able to fill in
the details. One could reasonably expect that in the limit curves
$\gamma^r_r$ and $\gamma^0_r$ get arbitrarily close almost surely.

\medskip
\noindent
\textbf{Structure of the paper.}
In Section~\ref{sec:scaling_ellipses_model} we show that HPS measures
$\mu_{\alpha}$ given in~\eqref{eq:shpss_measure} are indeed what we
expect when considering weak convergence of the PPP's defining ellipses
model. Section~\ref{sec:some_intersection_events} collects estimates on the
probability of HPS intersecting balls and segments. These estimates
are useful on many computations and also quantify the decay of
correlations of a HPS model. In
Section~\ref{sec:exploration_paths_IPS_ellipses} we focus on IPS model.
The definition of 
our exploration paths $\gamma^{0}_r$ together with some of its properties is in
Section~\ref{sub:exploration_paths_and_their_properties}. Finally,
Section~\ref{sec:Holder_regularity_exploration_paths} contains the proof of
Theorems~\ref{teo:tight_regularity_exploration_paths}
and~\ref{teo:hypothesis_H1_IPS}.


\section{Scaling ellipses model}
\label{sec:scaling_ellipses_model}

In order to consider homotheties of ellipses models it is useful to see all
these models as being random subsets based on Poisson Point Processes (PPPs) on
$S := \RR^2 \times \RR^+ \times (-\pi/2, \pi/2]$, usually denoted by
$\xi$. From $\xi$ we can build collections of objects (ellipses or sticks) on
the plane.

Fix $l \geq 0$ and for every point $s = (z, R, V)\in \xi$ with $R \geq l$ 
we define $E_l(s)$ as the ellipse with minor axis of size $l$ 
and coordinates $z, R, V$ being its center, major axis size
and direction, respectively. By this, a point process $\xi$ on
$S \cap \{R \geq l\}$ can be used to build the subset of the plane
$\cE_l(\xi) = \cup_{s \in \xi} E_l(s)$, which we call the \textit{covered} set.
Its complementary set $\cV_l(\xi)$ is called the \textit{vacant} set.
Notice that in the degenerate case $l = 0$ we have a collection of sticks.

In a previous work~\cite{teixeira_ungaretti2017ellipses} we use this setup with
$l=1$ to define the \textit{ellipses model}, a collection of heavy-tailed
random ellipses with size of their minor axis always equal to one.
Consider a PPP $\xi$ on $S$ with intensity measure $u\lambda \otimes
\rho \otimes \nu$ where $\lambda$ is the Lebesgue measure on $\RR^2$, $u>0$ is
a parameter that controls the density of ellipses, $\nu$ is the uniform
probability measure on $(-\pi/2, \pi/2]$ and $\rho$ is supported on $[1,
\infty)$ and satisfies $\rho[r, \infty) = L(r) r^{-\alpha}$ for some
\textit{slowly varying} function $L$, i.e., a function
$L: \RR^+ \to \RR^+$ such that for every $a > 0$ it holds $L(ax)/L(x) \to 1$
as $x \to \infty$. Parameter $\alpha > 0$ controls the tail decay of $\rho$.

The homothety of ratio $l > 0$ preserves directions and contracts space by a factor
$l^{2}$. It induces a transformation on $S$ defined as $\cH_l: S \to S$ such that
$(z, R, V) \mapsto (lz, lR, V)$ and we also denote
$\cH_l: \xi = \sum_i \delta_{s_i} \mapsto \sum_i \delta_{\cH_{l}(s_i)}$.
Thus, if $\xi$ is a PPP on $S$ with intensity measure $u\lambda \otimes
\rho \otimes \nu$ then $\cH_l\xi$ is also a PPP, but with intensity measure
\begin{equation*}
ul^{-2} \lambda \otimes \rho_l \otimes \nu,
    \quad \text{where $\rho_l[r,\infty) := \rho[\tfrac{r}{l},\infty)$},
\end{equation*}
and we want to tune $u(l)$ so that $\cH_l\xi$ converges to a
non-degenerate PPP on $S$ as $l \to 0$.
\begin{lema}[PPP convergence]
\label{lema:ppp_convergence}
Let distribution $\rho$ satisfy $\rho[r, \infty) = L(r) r^{-\alpha}$ for some
slowly varying function $L$ and for a fixed constant $\beta > 0$ define
$u(l) = \beta \cdot L(1/l)^{-1} \cdot l^{2-\alpha}$ and
$\xi_l = \PPP(u \lambda \otimes \rho \otimes \nu)$. The $\PPP$ on $S$ given by
$\cH_l\xi_l$ converges weakly to
$\PPP(\beta \mu_{\alpha})$ as $l \to 0$,
where $\mu_{\alpha}$ is the measure
\begin{equation*}
    \mu_{\alpha} := \lambda \otimes \phi_{\alpha}(\mathrm{d}x) \otimes \nu,
\end{equation*}
with $\phi_{\alpha}(\mathrm{d}x) := \alpha x^{-(1+\alpha)} \,\mathrm{d}x$.
\end{lema}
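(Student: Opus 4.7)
The plan is to reduce the weak convergence of Poisson point processes to vague convergence of their intensity measures on the base space, and then verify the latter by a direct computation that exploits the slow variation of $L$. The reduction is a standard fact: on a locally compact second countable Hausdorff space, if $\Lambda_l$ and $\Lambda$ are locally finite Borel measures and $\Lambda_l \to \Lambda$ vaguely, then $\PPP(\Lambda_l) \to \PPP(\Lambda)$ weakly in the vague topology on counting measures. I would apply this with the base space $S' := \RR^2 \times (0, \infty) \times (-\pi/2, \pi/2]$, on which $\beta \mu_\alpha$ is locally finite — the only singularity of $\phi_\alpha$ sits at $R = 0$, which is excluded.

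As noted in the paragraph preceding the lemma, the intensity of $\cH_l \xi_l$ equals $u(l) l^{-2} \lambda \otimes \rho_l \otimes \nu$ with $\rho_l[r, \infty) = \rho[r/l, \infty) = L(r/l)(r/l)^{-\alpha}$. Substituting the chosen $u(l) = \beta L(1/l)^{-1} l^{2-\alpha}$ gives
\[
    u(l) l^{-2} \rho_l[r, \infty)
        = \beta L(1/l)^{-1} l^{2-\alpha} \cdot l^{-2} \cdot L(r/l) (r/l)^{-\alpha}
        = \beta \cdot \frac{L(r/l)}{L(1/l)} \cdot r^{-\alpha}.
\]
For each fixed $r > 0$, slow variation of $L$ (applied with $x = 1/l \to \infty$ and $a = r$) yields $L(r/l)/L(1/l) \to 1$ as $l \to 0$, so the right-hand side converges pointwise to $\beta r^{-\alpha} = \beta \phi_\alpha[r, \infty)$.

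Since the limit tail function $r \mapsto \beta r^{-\alpha}$ is continuous on $(0, \infty)$, pointwise convergence of tails translates to vague convergence $u(l) l^{-2} \rho_l \to \beta \phi_\alpha$ on $(0, \infty)$. The two remaining marginals, $\lambda$ on $\RR^2$ and $\nu$ on $(-\pi/2, \pi/2]$, are independent of $l$, so tensoring yields vague convergence of $u(l) l^{-2} \lambda \otimes \rho_l \otimes \nu$ to $\beta \mu_\alpha$ on $S'$. The conclusion follows from the reduction in the first paragraph.

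The main (small) point to attend to is that one wants to avoid assuming any regularity on the measure $\rho$ itself (it may be purely atomic, for instance), which is why I phrase the verification at the level of tail functions rather than densities. If desired, Karamata's uniform convergence theorem upgrades the slow-variation limit to uniform convergence on compact subsets of $(0, \infty)$, but this is not strictly necessary: continuity of the limiting tail already promotes pointwise tail convergence to vague convergence of the marginal on $(0, \infty)$.
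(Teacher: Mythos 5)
Your argument is correct. Note that the paper itself contains no proof of this lemma --- it defers entirely to Section~3.1.1 of the second author's thesis --- so there is nothing in the text to compare against; your route (compute the intensity of $\cH_l\xi_l$, use slow variation of $L$ to show the rescaled tail $u(l)l^{-2}\rho_l[r,\infty)\to\beta r^{-\alpha}$ pointwise, upgrade to vague convergence of the intensity on $\RR^2\times(0,\infty)\times(-\pi/2,\pi/2]$, and conclude weak convergence of the Poisson processes via the standard equivalence, e.g.\ through Laplace functionals) is the standard and evidently intended one. Your two points of care --- excluding $R=0$ so that $\beta\mu_\alpha$ is locally finite, and phrasing everything at the level of tail functions so that no density assumption on $\rho$ is needed --- are both appropriate.
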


\begin{proof}
See~\cite[Section 3.1.1]{ungaretti2017phdthesis}.
\end{proof}

Measure $\mu_{\alpha}$ has a scaling property that resembles
homogeneity, as shown by
\begin{prop}
\label{prop:homogeneity_relation_PSS}
For any $\alpha, c > 0$ we have the homogeneity relation
$\mu_\alpha(\cH_c( \cdot )) = c^{2 - \alpha} \mu_\alpha$. In particular,
measure $\mu_2$ is scale-invariant.
\end{prop}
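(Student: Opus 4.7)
The plan is to verify the scaling identity by a direct change of variables on product sets and then invoke the uniqueness of product measures to conclude it on the full product $\sigma$-algebra. Writing $\mu_\alpha = \lambda \otimes \phi_\alpha \otimes \nu$, it suffices to check the relation on measurable rectangles of the form $A \times [a,b] \times U$ with $A \subset \RR^2$ Borel, $0 < a < b$, and $U \subset (-\pi/2, \pi/2]$ Borel, since such rectangles form a $\pi$-system generating the Borel $\sigma$-algebra on $S$.

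First I would compute the three factors separately. Since $\cH_c$ acts as $(z,R,V) \mapsto (cz, cR, V)$, its action on the rectangle gives $\cH_c(A \times [a,b] \times U) = (cA) \times [ca, cb] \times U$. The Lebesgue measure scales as $\lambda(cA) = c^2 \lambda(A)$, the angular marginal is unchanged because the last coordinate is fixed by $\cH_c$, so $\nu(U) = \nu(U)$. The remaining factor is
\begin{equation*}
\phi_\alpha([ca, cb]) = \int_{ca}^{cb} \alpha x^{-(1+\alpha)}\, \mathrm{d}x = c^{-\alpha} \int_{a}^{b} \alpha y^{-(1+\alpha)}\, \mathrm{d}y = c^{-\alpha}\, \phi_\alpha([a,b]),
\end{equation*}
by the substitution $x = cy$. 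Multiplying the three contributions yields $\mu_\alpha(\cH_c(A \times [a,b] \times U)) = c^2 \cdot c^{-\alpha} \cdot \mu_\alpha(A \times [a,b] \times U) = c^{2-\alpha}\, \mu_\alpha(A \times [a,b] \times U)$.

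Since both $\mu_\alpha \circ \cH_c$ and $c^{2-\alpha} \mu_\alpha$ are $\sigma$-finite Borel measures on $S$ that agree on the generating $\pi$-system of rectangles, a standard monotone class (or Dynkin $\pi$-$\lambda$) argument gives equality on all Borel sets. Setting $\alpha = 2$ yields $c^{2-\alpha} = 1$, so $\mu_2$ is invariant under every homothety $\cH_c$, proving scale-invariance.

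There is no real obstacle here: the argument is a one-line change of variables in the radial coordinate, the only subtlety being the correct bookkeeping of the exponents ($+2$ from the planar Lebesgue factor, $-\alpha$ from the radial factor, and $0$ from the angular factor). I would keep the exposition short and emphasize the cancellation at $\alpha = 2$, since this is precisely what distinguishes IPS within the HPS family.
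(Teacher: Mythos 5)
Your proof is correct and follows essentially the same route as the paper: evaluate $\mu_\alpha(\cH_c(\cdot))$ on measurable rectangles, observe the factors $c^{2}$ from the Lebesgue part and $c^{-\alpha}$ from the radial part, and extend to all Borel sets by a generating-class argument. The only difference is cosmetic — you spell out the $\pi$-$\lambda$/uniqueness step that the paper compresses into ``equality holds for sufficiently many sets.''
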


\begin{proof}
For any rectangular event $I$ of the form
$I = K \times (a,b) \times J$ it holds
\begin{align*}
\mu_\alpha\bigl(\cH_c(I)\bigr)
    &= \mu_\alpha\bigl(cK \times (ca,cb) \times J\bigr)
    =  c^{2} \lambda(K) \cdot
        \int_{ac}^{bc} \alpha x^{-(1 + \alpha)} \, \text{d}x \, \cdot \nu(J) \\
&= c^{2} \lambda(K)
    \bigl((ac)^{-\alpha} - (bc)^{-\alpha}\bigr)
    \nu(J)
= c^{2 - \alpha} \lambda(K) \bigl(a^{-\alpha} - b^{-\alpha}\bigr) \nu(J) \\
  &= c^{2 - \alpha} \mu_\alpha (K \times (a,b) \times J).
\end{align*}
Since equality holds for sufficiently many sets, the measures
$\mu_\alpha (\cH_c(\cdot))$ and $c^{2 - \alpha}\mu_\alpha$ are equal.
\end{proof}


\begin{remark}
\label{rem:smirnov_schramm}
As mentioned in the introduction, one could study convergence in the
Smirnov-Schramm topology, defined on the space of quads
(topological quadrilaterals), denoted by $(\cQ, \cT)$, for studying scaling
limits of planar percolation models. The idea behind this topology is that a
percolation model can be associated to a probability distribution on $\cQ$ that
codifies all quads that have been crossed by open paths. Under some uniform
Russo Seymour Welsh estimates on the sequence of percolation models, it holds
that their respective probability distributions on $\cQ$ are precompact with
respect to weak convergence. If we consider a sequence of ellipses models with
$\alpha = 2$, Theorem~1.3 from~\cite{teixeira_ungaretti2017ellipses}
asserts that
\begin{equation}\begin{array}{c}
\label{disp:box_cross}
\text{the probability of crossing boxes of fixed proportion with a}\\
\text{left-right vacant path is bounded away from $0$ and $1$,}
\end{array}\end{equation}
implying that a subsequential limiting distribution might be non-trivial.
We have not followed this line of work.
\end{remark}

\section{Some intersection events}
\label{sec:some_intersection_events}

In this section we study the behavior of small sticks and large sticks on a
HPS. Here, homogeneity is a very useful tool. A good example is applying
homogeneity to study percolation for the covered and the vacant sets, as shown
by
\begin{prop}
\label{prop:percolation_when_alpha_neq_2}
For $\alpha \neq 2$ and any intensity $u>0$, a HPS satisfies
$\PP(\cE \ \text{percolates}) = 1$ and $\PP(\cV \ \text{percolates}) = 0$.
\end{prop}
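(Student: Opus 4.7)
The plan is to exploit the scaling law from Proposition~\ref{prop:homogeneity_relation_PSS} together with translation ergodicity of the underlying PPP: these together force the two percolation probabilities to lie in $\{0,1\}$ and to be independent of $u$, after which the value is determined by a single evaluation at one conveniently chosen intensity. Writing $\xi_u$ for an HPS of intensity $u$, the pushforward of $u\mu_\alpha$ under $\cH_c$ equals $u c^{\alpha-2}\mu_\alpha$, so $\cH_c(\xi_u)$ has the same law as $\xi_{u c^{\alpha-2}}$. Since the events $\{\cE \text{ percolates}\}$ and $\{\cV \text{ percolates}\}$ are invariant under spatial homotheties,
\[
\PP_u(\cE\text{ perc.}) = \PP_{u c^{\alpha-2}}(\cE\text{ perc.}), \qquad \text{and similarly for } \cV,
\]
for every $c>0$. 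When $\alpha\neq 2$ the map $c\mapsto u c^{\alpha-2}$ sweeps out all of $(0,\infty)$, so both probabilities are independent of $u$; being also translation invariant, ergodicity of the Poisson process under spatial shifts forces them to take values in $\{0,1\}$.

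To pin down the two constants I would restrict to the auxiliary sub-process $\xi^*_u := \{(z,R,V)\in \xi_u : R\in[1,2]\}$, a Poisson process of line segments of length in $[2,4]$ with finite spatial intensity $u(1-2^{-\alpha})$ and uniform direction. This is a classical Boolean model of compact segments; known continuum percolation results (see~\cite{meester1996continuum}) give it a finite positive critical intensity for percolation of its covered set, and the usual RSW bootstrap then shows that at sufficiently large intensity its covered set also satisfies the box-crossing property uniformly in the scale.

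Choosing $u_0$ large enough that $\xi^*_{u_0}$ both percolates and has uniform positive probability of left-right covered crossings of any rectangle of fixed aspect ratio, both claims follow. Since the covered set of $\xi^*_{u_0}$ is contained in $\cE$, percolation of the former gives $\PP_{u_0}(\cE\text{ perc.}) = 1$. The box-crossing property, combined with FKG and a Borel--Cantelli argument over dyadic scales, almost surely produces a nested sequence of closed covered circuits around the origin at arbitrarily large radii; any infinite vacant path from the origin must topologically cross such a circuit and hence `cross' one of its constituent sticks in the sense of Definition~\ref{defi:cross_segment_by_curve}, ruling out vacant percolation. Together with the preceding paragraph, this proves the proposition for every $u>0$.

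The main obstacle is verifying the RSW box-crossing property for the one-dimensional sticks of $\xi^*$ at large intensity, since segments (unlike disks) do not by themselves generate two-dimensional coverage. The cleanest route I see is to restrict further to sticks with direction $V$ in a narrow window, observe that at large $u_0$ almost every unit cell contains many such sticks fully crossing it in the chosen direction, and then compare with a highly supercritical Bernoulli percolation on a coarse grid to export box crossings to all scales.
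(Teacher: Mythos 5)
Your proposal is correct, and at its core it runs on the same engine as the paper's proof: everything reduces to comparing the effective intensity of a radius-band of the soup against the critical intensity of a fixed-length Poisson stick soup, and the comparison is decided by the exponent gap between $\alpha$ and $2$. The packaging differs, though. The paper restricts to radii in $(r/2,r]$, notes the resulting intensity $u(2^\alpha-1)r^{-\alpha}$ dwarfs the scaled critical value $\ustick(r/2)=4\,\ustick(1)r^{-2}$ as $r\to 0$ (for $\alpha>2$) or $r\to\infty$ (for $\alpha<2$), and concludes directly for every $u$; you instead use the homothety relation $\cH_c(\xi_u)\distr\xi_{uc^{\alpha-2}}$ to show the percolation probabilities are constant in $u$, and then evaluate once at a large $u_0$ via the band $R\in[1,2]$. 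These are literally the same computation seen from two angles, but your version buys a cleaner logical structure (one evaluation suffices) at the cost of an extra ingredient, the $0$--$1$ law: note that the paper's Remark~\ref{remark:sipss_ergodic} only asserts ergodicity for $\alpha>1$, so for $\alpha\le 1$ you would need to invoke the general fact that a Poisson process with translation-invariant intensity is mixing under spatial shifts --- or, better, observe that ergodicity is not actually load-bearing in your argument, since your evaluation at $u_0$ already produces percolation of $\cE$ almost surely and covered circuits at all scales almost surely, giving the exact values $1$ and $0$ without any $0$--$1$ law. On the evaluation itself you are more explicit than the paper (which defers to~\cite{meester1996continuum}): the ``RSW bootstrap'' phrasing is a red herring for segment processes, but the coarse-grid comparison you describe in your last paragraph --- good cells containing near-horizontal and near-vertical crossing sticks centered in disjoint regions, dominating a highly supercritical Bernoulli field whose circuits block the vacant set --- is the standard and correct substitute, and also the right way to justify the step the paper leaves implicit, namely that the vacant set of a sufficiently supercritical fixed-length stick soup does not percolate. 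One cosmetic point: to rule out vacant percolation you do not need the refined notion of crossing from Definition~\ref{defi:cross_segment_by_curve}; a covered circuit is a closed separating curve contained in $\cE$, so plain set-theoretic intersection already confines every vacant component.
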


\begin{proof}
One important observation is that whenever we restrict the radius of the sticks
of a HPS to a finite interval, we can easily compare the soup obtained
with a Poisson stick soup (PSS) of sticks with fixed length, a model that
is already quite well-understood (see \cite{meester1996continuum}). Indeed,
notice that for any $r > 0$
\begin{equation*}
u \phi_\alpha ((r/2, r]) \nu((- \tfrac{\pi}{2}, \tfrac{\pi}{2}])
    = u (2^{\alpha} - 1) r^{-\alpha}
\end{equation*}
is the intensity of the process obtained when we only consider sticks with
radius in the interval $(r/2,r]$. In this restricted range of radii, the covered
set of HPS contains the covered set of a PSS with sticks of radius $r/2$ and intensity
$u (2^{\alpha} - 1) r^{-\alpha}$ and is contained in the covered set of a PSS of radius $r$
and same intensity. Moreover, if we denote by $\ustick(l)$ the critical parameter
for PSS of radius $l$, we must have by scaling that $\ustick(l) =
\ustick(1)l^{-2}$. Indeed, just as in Section~\ref{sec:scaling_ellipses_model}
we can use the homothety of ratio $l > 0$ to couple a PSS of radius 1 and
intensity $u$ to a PSS of radius $l$ and intensity $ul^{-2}$.
Notice that if $\alpha > 2$ then
\begin{equation}
\label{eq:density_ratio_for_dominating_PSS}
u (2^{\alpha} - 1) r^{-\alpha} \gg 4\ustick(1) r^{-2} = \ustick(r/2)
    \quad \text{when $r \to 0$.}
\end{equation}
This means that for $\alpha > 2$ small sticks
dominate fixed length PSSs with arbitrarily high intensities, and thus
$\cE$ percolates but $\cV$ does not. Analogously, if we fix $\alpha < 2$ we
have $u (2^{\alpha} - 1) r^{-\alpha} \gg \ustick(r)$ when $r \to \infty$
and the same conclusion applies, changing small sticks for large ones. We
emphasize that this reasoning is valid for any fixed $u > 0$.
\end{proof}

We also list some computations and estimates for the probability of
relevant events for HPS. Some of them are obtained through the same kind of
analysis done in~\cite{teixeira_ungaretti2017ellipses}. Another useful tool is
reference~\cite{parker1976some}, which computes the expectation of some useful
quantities for general random processes of line segments on the plane.

We begin by studying some events that
depend only on sticks with radius within some finite range.
Let $t > r > 0$ and consider the soup $\xi^{r, t}$,
whose intensity measure is given by
\begin{equation*}
u\mu_{\alpha}^{r,t}
    = u\bigl(r^{-\alpha} - t^{-\alpha}\bigr) \lambda \otimes
    \bigl(r^{-\alpha} - t^{-\alpha}\bigr)^{-1}
    \I\{r \le x < t\} \cdot \phi_{\alpha}(\mathrm{d} x) \otimes
    \nu
\end{equation*}

With this representation, the marginal measure on $\RR^{+}$ is a
probability distribution and the model fits into the context of
\cite{parker1976some}. For any convex Borel set $A \subset \RR^{2}$
the expected number $N(A)$ of segments intersecting $A$ is given
by~\cite[Equation~(7)]{parker1976some},
\begin{equation}
\label{eq:parker_cowan_shpss_0}
\EE_{\alpha}^{r, t}[N(A)]
    = u\bigl(r^{-\alpha} - t^{-\alpha}\bigr) \lambda(A) +
    u\bigl(r^{-\alpha} - t^{-\alpha}\bigr)\pi^{-1} \EE_{\alpha}^{r, t}[2R]
    \cdot \per(A),
\end{equation}
where $\smash{\EE_{\alpha}^{r, t}[2R]}$ is the expected length of a
stick and $\per(A)$ is the perimeter of $A$. Computing the expected value of
$R$, one gets
\begin{equation*}
\EE_{\alpha}^{r, t}[R] 
    = \bigl(r^{-\alpha} - t^{-\alpha}\bigr)^{-1}
    \int_{r}^{t} x \cdot \alpha x^{-(1+\alpha)} \,\mathrm{d} x.
\end{equation*}
Noticing that the left hand side of~\eqref{eq:parker_cowan_shpss_0} can be
described as
\begin{equation*}
\EE_{\alpha}^{r, t}[N(A)]
    = \mu_{\alpha}^{r, t}(E_0(s) \cap A \neq \varnothing)
    = \mu_{\alpha}(E_0(s) \cap A \neq \varnothing, r \le R < t),
\end{equation*}

Equation~\eqref{eq:parker_cowan_shpss_0} can be rewritten as
\begin{equation}
\label{eq:parker_cowan_shpss}
\EE_{\alpha}^{r, t}[N(A)]
    = u\bigl(r^{-\alpha} - t^{-\alpha}\bigr) \lambda(A) +
    \frac{2 \alpha u}{\pi} \cdot
    \Bigl(\int_{r}^{t} x^{-\alpha} \,\mathrm{d} x\Bigr)
    \cdot \per(A).
\end{equation}
Even without the aid of
Equation~\eqref{eq:parker_cowan_shpss} it is clear that any fixed Borel
set $A$ with $\lambda(A) > 0$ has infinitely many small sticks
centered on it, since $\phi_\alpha(0,r) = \infty$ for any $r, \alpha >0$.
Using~\eqref{eq:parker_cowan_shpss} we can
also calculate the $\mu_{\alpha}$ measure of sticks
intersecting some fixed sets.
\begin{prop}
\label{prop:intersection_computations}
Fix $a, r > 0$.
Denote by $L(a) \subset \RR^2$ a segment of length $a$. We have
\begin{align*}
\mu_{\alpha}
\bigl( E_0(s) \cap L(a) \neq \varnothing, R \in [r, \infty) \bigr)
    &= \infty
    &\quad \text{for $\alpha \le 1$},
    \\
\mu_{\alpha}
\bigl( E_0(s) \cap L(a) \neq \varnothing, R \in [r, \infty) \bigr)
    &= \tfrac{4}{\pi} \cdot \tfrac{\alpha}{\alpha - 1} \cdot
    a r^{1 - \alpha}
    &\quad \text{for $\alpha > 1$},
    \\
\mu_{\alpha}
\bigl( E_0(s) \cap L(a) \neq \varnothing, R \in (0, r) \bigr)
    &= \infty
    &\quad \text{for $\alpha \geq 1$}, \\
\mu_{\alpha}
\bigl( E_0(s) \cap L(a) \neq \varnothing, R \in (0, r) \bigr)
    &= \tfrac{4}{\pi} \cdot \tfrac{\alpha }{1 - \alpha} \cdot a r^{1-\alpha}
    &\quad \text{for $\alpha < 1$}.
\end{align*}
For a ball $B(a) \subset \RR^2$ of radius $a$ we have
\begin{align*}
\mu_{\alpha}
\bigl( E_0(s) \cap B(a) \neq \varnothing, R \in (0, r) \bigr)
    &= \infty
    &\quad \text{for $\alpha > 0$}, \\
\mu_{\alpha}
\bigl( E_0(s) \cap B(a) \neq \varnothing, R \in [r, \infty) \bigr)
    &= \infty
    &\quad \text{for $\alpha \le 1$},
    \\
\mu_{\alpha}
\bigl( E_0(s) \cap B(a) \neq \varnothing, R \in [r, \infty) \bigr)
    &= \pi a^2 r^{-\alpha} +
    4 \cdot \tfrac{\alpha}{\alpha - 1} \cdot a r^{1 - \alpha}
    &\quad \text{for $\alpha > 1$}.
\end{align*}
\end{prop}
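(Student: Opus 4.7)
The plan is to apply Equation~\eqref{eq:parker_cowan_shpss} directly to each of the two geometric shapes considered (a segment $L(a)$ and a ball $B(a)$), and then extract the required $\mu_\alpha$-measures by taking the two endpoints $r$ and $t$ of the radius window to their appropriate limits. Recall that, for the unit-intensity choice $u=1$, Equation~\eqref{eq:parker_cowan_shpss} reads
\begin{equation*}
\mu_\alpha\bigl(E_0(s) \cap A \neq \varnothing,\ r \le R < t\bigr)
= (r^{-\alpha} - t^{-\alpha})\,\lambda(A)
 + \frac{2\alpha}{\pi}\Bigl(\int_{r}^{t} x^{-\alpha}\,\mathrm{d}x\Bigr)\per(A),
\end{equation*}
for any convex Borel set $A \subset \RR^2$, a formula already established earlier in this section.

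First I would treat the segment case. Since $L(a)$ is a degenerate convex set, one has $\lambda(L(a)) = 0$ and, with the standard integral-geometric convention for perimeter of a convex body, $\per(L(a)) = 2a$ (both sides of the segment are counted). Substituting gives
\begin{equation*}
\mu_\alpha\bigl(E_0(s) \cap L(a) \neq \varnothing,\ r \le R < t\bigr)
  = \frac{4 \alpha a}{\pi}\int_{r}^{t} x^{-\alpha}\,\mathrm{d}x.
\end{equation*}
For the range $R \in [r,\infty)$ I would send $t \to \infty$: the integral converges iff $\alpha > 1$, in which case it equals $r^{1-\alpha}/(\alpha-1)$, yielding the stated value $\tfrac{4}{\pi}\tfrac{\alpha}{\alpha-1} a r^{1-\alpha}$; for $\alpha \le 1$ the integral diverges, which gives the $\infty$ entry. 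Symmetrically, for $R \in (0,r)$ I would let the lower endpoint tend to $0$ with $t=r$ fixed: the integral converges iff $\alpha < 1$, producing the stated value, and diverges for $\alpha \ge 1$.

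For the ball case, I would use $\lambda(B(a)) = \pi a^2$ and $\per(B(a)) = 2\pi a$, which yield
\begin{equation*}
\mu_\alpha\bigl(E_0(s) \cap B(a) \neq \varnothing,\ r \le R < t\bigr)
  = (r^{-\alpha} - t^{-\alpha})\,\pi a^2
  + 4\alpha a \int_{r}^{t} x^{-\alpha}\,\mathrm{d}x.
\end{equation*}
For $R \in [r,\infty)$ the first term tends to $\pi a^2 r^{-\alpha}$ (always finite), while the second behaves as in the segment analysis, so one obtains finiteness exactly when $\alpha > 1$ and the claimed formula follows. For $R \in (0,r)$ the first term alone blows up as the lower radius cutoff goes to $0$, so the measure is infinite for all $\alpha > 0$.

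There is no serious obstacle; the only point that requires care is the perimeter convention for the degenerate convex set $L(a)$, but the Parker--Cowan formula \cite{parker1976some} is precisely set up so that $\per$ counts both sides of a segment (consistent, e.g., with limiting a thin rectangle to a segment), and this is what makes the final constants come out as stated. All other steps reduce to elementary integration and taking monotone limits, which is justified since the integrands are nonnegative and one can invoke monotone convergence on the underlying counting process $N(\,\cdot\,)$.
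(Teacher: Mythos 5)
Your proposal is correct and is essentially identical to the paper's own (one-line) proof, which simply says to apply Equation~\eqref{eq:parker_cowan_shpss} and take the limits $r \downarrow 0$ or $t \uparrow \infty$; your computations with $\lambda(L(a))=0$, $\per(L(a))=2a$, $\lambda(B(a))=\pi a^2$, $\per(B(a))=2\pi a$ reproduce exactly the stated constants. The remark about the degenerate-perimeter convention is a useful clarification that the paper leaves implicit.
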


\begin{proof}
Use~\eqref{eq:parker_cowan_shpss}  and take limits when
$r \downarrow 0$ or $t \uparrow \infty$. 
\end{proof}

Curves that have some part that is close to being linear
should in principle present the same behavior as $L(a)$, but we
are not interested in investigating the behavior of general sets.
Instead, we analyze in depth the $\mu_{\alpha}$ measure of sticks
intersecting the boundary of a circle, which will be important
when studying arm events for IPS. In particular, we are interested in the
$\mu_\alpha$ measure of sticks that intersect a circle twice.
\begin{prop}
\label{prop:sticks_cap_boundary_ball}
For any $r, l > 0$ it holds that
\begin{align}
\label{eq:sticks_cap_boundary_ball}
\mu_{\alpha}\bigl(
    E_0(s) \cap \partial B(l) \neq \varnothing, R \in (0, r)\bigr)
    &< \infty
\quad \text{if and only if $\alpha < 1$}, \\
\label{eq:sticks_cap_boundary_ball_twice}
\mu_{\alpha}\bigl(
    \# E_0(s) \cap \partial B(l) = 2\bigr)
    &< \infty
\quad \text{if and only if $\alpha \in (1, 3)$}.
\end{align}
Moreover, for $\alpha=2$ we have 
$\mu_2\bigl(\# E_0(s) \cap \partial B(l) = 2\bigr) = 2\pi$.
\end{prop}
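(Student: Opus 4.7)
The plan is to use a \emph{chord-based parametrization} of sticks that diagonalizes the geometry of intersections with $\partial B(l)$. For each $V \in (-\pi/2, \pi/2]$, write $e_V = (\cos V, \sin V)$ and $e_V^\perp = (-\sin V, \cos V)$, and decompose the center as $z = q\, e_V + p\, e_V^\perp$. This is a rotation at each fixed $V$, so the Jacobian is $1$ and $\mathrm{d}z \otimes \tfrac{1}{\pi}\mathrm{d}V = \tfrac{1}{\pi}\,\mathrm{d}q\,\mathrm{d}p\,\mathrm{d}V$. In these coordinates the line supporting $E_0(s)$ lies at perpendicular distance $|p|$ from the origin, the stick corresponds to the $q'$-interval $[q-R, q+R]$, and when $|p| < l$ the line meets $\partial B(l)$ at $q' = \pm D$ with $D := \sqrt{l^2 - p^2}$. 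Consequently $E_0(s) \cap \partial B(l) \neq \varnothing$ iff $[q-R, q+R]$ contains at least one of $\pm D$, and $\#\bigl(E_0(s) \cap \partial B(l)\bigr) = 2$ iff it contains both, which forces $R \geq D$. Since the event depends only on $(q, p, R)$, the $V$-integral contributes the factor $1$.

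For~\eqref{eq:sticks_cap_boundary_ball}, I would verify that at fixed $p, R$ the Lebesgue measure of admissible $q$ equals $4R$ when $R < D$ and $2(R + D)$ when $R \geq D$ (a short case analysis of the union of two intervals of length $2R$ centered at $\pm D$). For every $p \in (-l, l)$, the part of the inner $R$-integral over $(0, \min(r, D))$ reduces to $\int_0^{\min(r, D)} 4\alpha R^{-\alpha}\,\mathrm{d}R$, which is finite iff $\alpha < 1$; in that regime the complementary pieces are easily finite (using that $D^{1-\alpha}$ stays bounded as $|p|\to l$) and the outer integral is over the bounded interval $(-l, l)$. The $\alpha \geq 1$ case is handled by observing that the pointwise inner integral already equals $+\infty$ on a full-measure set of $p$'s.

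For~\eqref{eq:sticks_cap_boundary_ball_twice}, the analogous accounting gives admissible $q$-measure $\max\bigl(0, 2(R - D)\bigr)$. Integrating $\alpha R^{-(1+\alpha)}\cdot 2(R - D)$ over $R \geq D$ converges at infinity iff $\alpha > 1$, and a direct computation yields $\tfrac{2}{\alpha - 1}\, D^{1-\alpha}$. The remaining outer integral $\tfrac{2}{\alpha - 1}\int_{-l}^{l}(l^2 - p^2)^{(1-\alpha)/2}\,\mathrm{d}p$ is finite near $p = \pm l$ iff $(1 - \alpha)/2 > -1$, i.e., iff $\alpha < 3$. For $\alpha = 2$ the integrand specializes to $(l^2 - p^2)^{-1/2}$, whose integral over $[-l, l]$ equals $\pi$ (via the standard substitution $p = l\sin\theta$), giving $\mu_2\bigl(\#E_0(s)\cap \partial B(l) = 2\bigr) = 2\pi$.

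The computations are routine once the chord parametrization is in place, so the main ``obstacle'' is really only bookkeeping both sides of each ``iff''. The divergence direction in each case is painless: the pointwise integrand already blows up on a set of positive measure in the remaining variables---near $R = 0$ in~\eqref{eq:sticks_cap_boundary_ball} when $\alpha \geq 1$, near $R = \infty$ in~\eqref{eq:sticks_cap_boundary_ball_twice} when $\alpha \leq 1$, and near $|p| = l$ when $\alpha \geq 3$.
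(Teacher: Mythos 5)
Your proof is correct, and it takes a genuinely different route from the paper. The paper reduces to $l=1$ by homogeneity and then works with the distribution of stick \emph{centers}: it invokes the Parker--Cowan formula~\eqref{eq:parker_cowan_shpss_0} for the mean number of sticks meeting the convex set $B(1)$, subtracts the measure of sticks entirely contained in $B(1)$ (computed via the circular-segment area $\lambda(A_0(R,V)) = 2(\arccos R - \sin(2\arccos R)/2)$), and for the two-point intersections integrates the explicit area $\lambda(A_2(R,V))$ from Figure~\ref{fig:sticks_cap_ball}, finishing the $\alpha=2$ evaluation by exhibiting an antiderivative $g$ of the resulting integrand. You instead rotate coordinates to align with the stick direction, so that $\mathrm{d}z\otimes\tfrac1\pi\mathrm{d}V = \tfrac1\pi\mathrm{d}q\,\mathrm{d}p\,\mathrm{d}V$ and both intersection events become interval-containment conditions on the supporting line; the $q$-measures $4R$, $2(R+D)$, $\max(0,2(R-D))$ and the closed-form inner integral $\tfrac{2}{\alpha-1}D^{1-\alpha}$ are all easy to verify, and they are consistent with the paper's areas (e.g.\ $\int_{-1}^{1}\max(0,2(R-\sqrt{1-p^2}))\,\mathrm{d}p = 4R-\pi$ for $R\ge 1$). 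What the paper's approach buys is reuse of machinery already set up for Proposition~\ref{prop:intersection_computations} and the geometric picture of the center-regions; what yours buys is self-containedness (no appeal to~\cite{parker1976some}), a cleaner separation of the three divergence mechanisms ($R\to 0$, $R\to\infty$, $|p|\to l$), and a painless derivation of the constant $2\pi$ from $\tfrac{2}{\alpha-1}\int_{-l}^{l}(l^2-p^2)^{-1/2}\,\mathrm{d}p = 2\pi$ rather than an ad hoc antiderivative. The only point worth stating explicitly in a written version is that sticks with $|p|\ge l$ contribute nothing (the supporting line misses the circle, and tangency is a null set), so the outer integral really is over $p\in(-l,l)$.
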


\begin{proof}
We can suppose by homogeneity that $l = 1$. Define
\begin{equation*}
A_r := \bigl\{E_0(s) \cap \partial B(1) \neq \varnothing, R \in (0, r)\bigr\}.
\end{equation*}
We have for any
$\varepsilon \in (0, r)$ that
\begin{equation*}
\label{eq:small_sticks_cap_circle1}
\mu_\alpha\bigl(
    E_0(s) \cap \partial B(1) \neq \varnothing,
    R \in [\varepsilon, r)
    \bigr)
    \le \lambda(B(1 + r)) \cdot (\varepsilon^{-\alpha} - r^{-\alpha})
    < \infty,
\end{equation*}
since a stick of radius smaller than $r$ that intersects $B(1)$ must have
center inside $B(1+r)$.  Thus, either $\mu_\alpha(A_r)$ is finite for every
$r > 0$ or it is infinite for every $r > 0$.
Using~\eqref{eq:parker_cowan_shpss} we have that $N = N(B(1))$ satisfies
\begin{equation}
\label{eq:cap_ball}
\EE^{r, 1}_{\alpha} [N]
    = (r^{-\alpha} - 1) \pi +
    \frac{2 \alpha}{\pi} \cdot
    \Bigl(\int_{r}^{1} \! x^{-\alpha} \,\mathrm{d} x\Bigr)
    \cdot 2\pi
    = \int_{r}^{1} \! (\pi + 4x) \alpha x^{-(1+\alpha)} \,\mathrm{d} x.
\end{equation}
If we denote by $C$ the number of sticks that are entirely
contained within $B(1)$ and discount it from $N$ we
obtain precisely the sticks that intersect $\partial B(1)$.
For fixed $R, V$ let $A_{0}(R, V) \subset \RR^2$ denote the set
of centers $z$ for which the stick $(z, R, V)$ is
contained in $B(1)$. Region $A_{0}(R,V)$ is non-empty when $2R < 2$,
case in which its area is
\begin{equation*}
\lambda(A_{0}(R,V)) = 2 \cdot (\arccos R - \sin(2\arccos R)/2),
\end{equation*}
see Figure~\ref{fig:sticks_cap_ball} for a visual proof.
We have
\begin{figure}
\centering
\begin{tikzpicture}[scale=2]
    \draw (0,0) coordinate (o) circle (1);
    \coordinate (B)  at ( 1, 1.2);
    \coordinate (mB) at ( 1,-1.2);
    \coordinate (C)  at (-1, 1.2);
    \coordinate (mC) at (-1,-1.2);
    \draw[pattern=north west lines]
        (B) arc (0:-180:1) -- (C)
        -- (mC) arc (180:0:1) -- cycle;
    \draw[|<->|] (mB) ++(.4, 0) -- ($(B) +(.4, 0) $) node[midway, right] {$2R$};
    \draw[|<->|] (-1.2, -1) -- (-1.2, 1) node[midway, left] {$2$};
    \node at (0  , 1.3) {Case $2R \geq 2$};
    \node at (3.8, 1.3) {Case $2R < 2$};
    \draw[thick, ->, >=latex] (2, -1) -- (2, -.6) node[midway, right] {$V$};
\begin{scope}[shift={(3.8, 0)}]
    \draw (0,0) coordinate (o) circle (1);
    \coordinate (A)  at ({ cos(40)}, 0);
    \coordinate (mA) at ({-cos(40)}, 0);
    \coordinate (B)  at (  40:1);
    \coordinate (mB) at (- 40:1);
    \coordinate (C)  at ( 140:1);
    \coordinate (mC) at (-140:1);
    \draw (mB) -- (B) -- (mC)
        pic["$\theta$", draw, angle eccentricity=1.7, angle radius=10] {angle=mC--B--mB};
    \draw (mC) -- (o) -- (mB)
        pic["$2\theta$", draw, angle eccentricity=1.9, angle radius=7] {angle=mC--o--mB};
    \draw (B) -- (mB) -- (mC)
        pic[draw, angle radius=8] {right angle=B--mB--mC}; 
    \filldraw[black, fill=black!50, fill opacity=.4]
        (A) arc (40:140:1) arc (-140:-40:1) -- cycle;
    \draw
        (mB) -- (mC) arc (-140:-40:1) -- cycle;
    \draw[pattern=north west lines]
        (A) arc (-40:0:1) -- ++($(mB) - (B)$) arc (0:40:1) -- cycle;
    \draw[pattern=north west lines]
        (mA) arc (220:180:1) -- ++($(mB) - (B)$) arc (180:140:1) -- cycle;
    \draw[|<->|] (mB) ++(.4, 0) -- ($(B) +(.4, 0) $) node[midway, right] {$2R$};
    \draw[|<->|] (-1.2, -1) -- (-1.2, 1) node[midway, left] {$2$};
\end{scope}
\end{tikzpicture}
\caption{Regions $A_{0}(R,V)$ (shaded) and $A_{2}(R,V)$ (hatched) represent
    centers $z$ for which $(z,R,V)$ intersects $\partial B(1)$ precisely $0$ and
    $2$ times, respectively. The area of $A_0(R,V)$
    is non-zero only for $2R < 2$, in which case it is given by
    $2 \cdot (\theta - \sin(2\theta)/2)$ (twice the area of a segment
    of the circle with angle $2\theta$, where $\theta = \arccos R$).
    The area of $A_{2}(R, V)$ when $2R \geq 2$ is $4R-\pi$,
    but when $2R < 2$ we must account for the superimposed
    area ($A_0(R,V)$). The area is then
    $4R-\pi + \bigl(2\theta - \sin(2\theta)\bigr)$.}
\label{fig:sticks_cap_ball}
\end{figure}
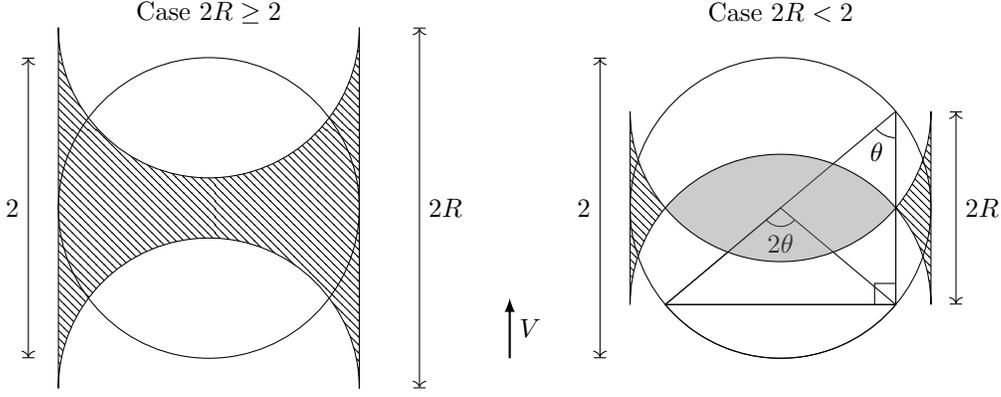
\begin{align*}
\EE_{\alpha}^{r,1}[C]
    = \mu_{\alpha}^{r,1}\bigl(E_0(s) \subset B(1)\bigr)
    &= \frac{1}{\pi} \int_{r}^{1} \int_{-\pi/2}^{\pi/2}
        \hspace{-4mm}\lambda(A_{0}(R, V))
        \,\mathrm{d} V \,\phi_{\alpha}(\mathrm{d} R) \\
    &= \int_{r}^{1}
        \Bigl(2\arccos x - 2x \sqrt{1 - x^{2}}\Bigr)
        \cdot \alpha x^{-(1+\alpha)}\,\mathrm{d} x,
\end{align*}
and thus we can write
\begin{equation*}
\EE_{\alpha}^{r,1}[N - C]
    = \int_{r}^{1}
    \Bigl(\pi + 4x - 2\arccos x + 2x \sqrt{1 - x^{2}}\Bigr)
    \cdot \alpha x^{-(1+\alpha)}\,\mathrm{d} x.
\end{equation*}
The integrand is non-negative and asymptotically
$\bigl(8x + O(x^{3})\bigr) \alpha x^{-(1+\alpha)}$
as $x \downarrow 0$, implying that
\begin{equation*}
\lim_{r\to 0} \EE_{\alpha}^{r,1}[N - C] = \infty
\quad \text{for $\alpha \geq 1$.}
\end{equation*}
Also, noticing that 
$\pi + 4x - 2\arccos x + 2x \sqrt{1 - x^{2}} \le 8x$ by concavity,
we obtain $\EE_{\alpha}^{0,1}[N - C] < \infty$ when $\alpha < 1$ and conclude
the proof of~\eqref{eq:sticks_cap_boundary_ball}.

To prove~\eqref{eq:sticks_cap_boundary_ball_twice} we adopt a
similar approach. Once again, homothety allows us to
consider only the case $l=1$.
Let $T$ denote the number of sticks that intersect
$\partial B(1)$ exactly twice, and for fixed $R, V$ denote
by $A_{2}(R,V)$ the region of centers $z$ of sticks $(z, R, V)$
which intersect $\partial B(1)$ twice when present. See
Figure~\ref{fig:sticks_cap_ball} for the area of $A_{2}(R, V)$.
We have
\begin{align*}
\EE_{\alpha}[T]
    &= \mu_{\alpha}\bigl(\# E_0(s) \cap \partial B(l) = 2\bigr)
    = \frac{1}{\pi} \int_{0}^{\infty} \int_{-\pi/2}^{\pi/2}
        \hspace{-4mm}\lambda(A_{2}(R, V))
        \,\mathrm{d} V \,\phi_{\alpha}(\mathrm{d} R) \\
    &= \int_{0}^{1} \hspace{-2mm}
        (4x-2\arcsin x - 2x \sqrt{1 - x^{2}})
        \phi_{\alpha}(\mathrm{d}x) + 
    \int_{1}^{\infty} \hspace{-2mm}
        (4x - \pi) \phi_{\alpha}(\mathrm{d}x) \\
    &=: (I) + (II).
\end{align*}
Both terms are positive. Notice that $(II)$ is infinite for $\alpha \le 1$,
while for $\alpha > 1$ we can write
\begin{equation*}
    (II)
    = 4\alpha \int_{1}^{\infty} x^{-\alpha} \,\mathrm{d} x -
    \pi \int_{1}^{\infty} \alpha x^{-(1+\alpha)} \,\mathrm{d} x
    = \frac{4\alpha}{\alpha-1} - \pi.
\end{equation*}
For the first term, we have that as $x \downarrow 0$ the integrand
is asymptotically $\tfrac{2}{3} x^3$. Since
$\int_{0}^{1} x^{2-\alpha} \,\mathrm{d} x$ is finite if and only if
$\alpha < 3$, the result in~\eqref{eq:sticks_cap_boundary_ball_twice}
follows. Finally, for $\alpha=2$ it is straightforward
to check that
\begin{equation*}
g(x)
    = 6 \cdot \frac{\smash{\sqrt{1-x^{2}}}}{x} + 
    2 \cdot \frac{\arcsin x}{x^2} + 4 \arcsin x - \frac{8}{x}
\end{equation*}
is an anti-derivative for the integrand in $(I)$, implying
\begin{equation*}
\mu_{2}\bigl(\# E_0(s) \cap \partial B(l) = 2\bigr)
    = g(1) - g(0) + \Bigl(\frac{4 \cdot 2}{2-1} - \pi\Bigr)
    = 2\pi. \qedhere
\end{equation*}
\end{proof}

\subsection{Long connections on HPS}
\label{sub:long_connections_on_shpss}

The estimates from this section make it possible to
quantify the probability of a single stick connecting far away regions.
From Proposition~\ref{prop:intersection_computations} we can estimate the
probability that one stick will intersect the outer and inner boundaries of an
annulus.
\begin{lema}
\label{lema:crossing_annulus}
Take $\alpha > 1$ and let
$
\Gamma_{12}
    := \{s;\ E_0(s) \cap \partial B(l_j) \neq \varnothing, j = 1,2\}
$
with $l_2 > l_1 > 0$ and $a := l_2/l_1$.
There is a constant $c_\alpha > 0$ such that
\begin{equation}
\label{eq:crossing_annulus_l2}
c_\alpha^{-1} l_1^{2 - \alpha}[ a^{-\alpha} + a^{1 - \alpha}] 
    \le \mu_\alpha(\Gamma_{12})
    \le c_\alpha l_1^{2 - \alpha} 
        [ (a - 1)^{-\alpha} + (a - 1)^{1 - \alpha}].
\end{equation}
In particular, if $a \geq 2$ then
\begin{equation}
\label{eq:crossing_annulus_simpler}
\mu_\alpha(\Gamma_{12})
    \le  c_\alpha l_1^{2 - \alpha} a^{1 - \alpha}.
\end{equation}
\end{lema}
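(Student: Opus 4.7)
The plan is to first apply the homogeneity relation of Proposition~\ref{prop:homogeneity_relation_PSS} with $c = l_1$. Setting $\tilde\Gamma_{12} := \{s; E_0(s)\cap\partial B(1) \neq \varnothing,\ E_0(s)\cap\partial B(a) \neq \varnothing\}$, one has $\cH_{l_1}(\tilde\Gamma_{12}) = \Gamma_{12}$, so $\mu_\alpha(\Gamma_{12}) = l_1^{2-\alpha}\mu_\alpha(\tilde\Gamma_{12})$ and the prefactor $l_1^{2-\alpha}$ drops out; it suffices to prove the bounds with $l_1 = 1$ and $l_2 = a$.

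For the upper bound I use rotational invariance around the origin to fix $V = 0$, writing $\mu_\alpha(\tilde\Gamma_{12}) = \int_0^\infty A(\rho)\,\phi_\alpha(\mathrm{d}\rho)$, where $A(\rho)$ is the Lebesgue measure of centers $(x,y)$ of horizontal sticks of half-length $\rho$ meeting both $\partial B(1)$ and $\partial B(a)$. For $|y| \le 1$, the admissible $x$-set for each circle is a union of two intervals of length $2\rho$ centered at $\pm\sqrt{l^2-y^2}$ (for $l \in \{1,a\}$); since the minimum gap between a center coming from $\partial B(1)$ and one from $\partial B(a)$ is $\sqrt{a^2-y^2} - \sqrt{1-y^2} \ge a-1$, the intersection is empty when $\rho < (a-1)/2$. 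Pairwise interval bounds then yield $A(\rho) \le 16\rho$ on $(a-1)/2 \le \rho \le a$ and $A(\rho) \le 4(1+\rho)$ on $\rho \ge a$. Integration against $\alpha\rho^{-1-\alpha}\,\mathrm{d}\rho$ (using $\alpha>1$) produces terms proportional to $(a-1)^{1-\alpha}$, $a^{-\alpha}$ and $a^{1-\alpha}$; the monotone comparisons $a^{-\alpha} \le (a-1)^{-\alpha}$ and $a^{1-\alpha} \le (a-1)^{1-\alpha}$ then deliver the required upper bound.

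For the lower bound I would construct two explicit subfamilies of $\tilde\Gamma_{12}$, one for each term. Family~1 consists of sticks with $R \in [a,2a]$ and $z \in B(1)$: the stick passes through $B(1)$ (so it crosses $\partial B(1)$), while the parallelogram identity $|z+Re(V)|^2 + |z-Re(V)|^2 = 2(|z|^2 + R^2)$ forces one endpoint to lie at distance at least $R \ge a$ from the origin, so the stick also crosses $\partial B(a)$; its $\mu_\alpha$-measure equals $\pi(1-2^{-\alpha})a^{-\alpha}$. Family~2 consists of sticks with $R \in [2a,4a]$, $|z| \in [a,2a]$, and direction $V$ nearly radial from the origin, meaning $|\sin(\theta_z - V)| \le 1/(2a)$ with $\theta_z := \arg(z)$: the perpendicular distance from the origin to the stick's line is at most $|z||\sin(\theta_z-V)| \le 1$, the foot of the perpendicular lies inside the stick since $|z||\cos(\theta_z-V)| \le 2a \le R$, and the maximum distance along the stick exceeds $R \ge 2a > a$, so the stick again crosses both circles. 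A polar computation (using $\arcsin(1/(2a)) \ge 1/(2a)$) gives $\mu_\alpha$-measure at least of order $a^{1-\alpha}$. Summing both contributions produces~\eqref{eq:crossing_annulus_l2}.

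The simpler inequality~\eqref{eq:crossing_annulus_simpler} then follows from~\eqref{eq:crossing_annulus_l2} because $(a-1)^{-\alpha} + (a-1)^{1-\alpha} \le 3\cdot 2^{\alpha-1}a^{1-\alpha}$ when $a \ge 2$. I expect the main technical subtlety to lie in the intermediate radius regime for the upper bound, where the geometry of $A_1(y,\rho)\cap A_a(y,\rho)$ depends intricately on $y$; however, the crude pairwise bound $A(\rho)\le 16\rho$ turns out to suffice because $\int \rho\cdot\rho^{-1-\alpha}\,\mathrm{d}\rho$ already carries the correct exponent $1-\alpha$ at the lower endpoint $\rho = (a-1)/2$.
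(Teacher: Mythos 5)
Your argument is correct, but it takes a genuinely different route from the paper. The paper's proof is a two-line sandwich: it observes that
$\{E_0(s) \cap B(l_1) \neq \varnothing,\, R \geq l_2\} \subset \Gamma_{12} \subset \{E_0(s) \cap B(l_1) \neq \varnothing,\, 2R \geq l_2 - l_1\}$
(the first inclusion because an endpoint of a stick of half-length $R$ meeting $B(l_1)$ must lie at distance at least $R$ from the origin — the same observation you extract from the parallelogram identity — and the second because a stick joining the two circles must span the annulus), and then reads off both bounds from the ball computation in Proposition~\ref{prop:intersection_computations}, which is itself a limit of the Parker--Cowan formula~\eqref{eq:parker_cowan_shpss}. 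You instead redo the measure computation by hand: for the upper bound a Fubini/rotation-invariance calculation of the admissible-center area $A(\rho)$ with the correct cutoff $\rho \geq (a-1)/2$, and for the lower bound two explicit disjoint subfamilies whose measures produce the $a^{-\alpha}$ term (centers in $B(1)$, moderate $R$) and the $a^{1-\alpha}$ term (long, nearly radial sticks with distant centers) separately. Both arguments are sound and yield the same exponents; yours is longer but self-contained and in effect re-derives by hand the area term and the perimeter term that Proposition~\ref{prop:intersection_computations} packages for the paper, while the paper's version is shorter precisely because that proposition has already been established. Your deduction of~\eqref{eq:crossing_annulus_simpler} from $a - 1 \geq a/2$ matches what the paper leaves implicit.
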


\begin{proof}
By Proposition \ref{prop:intersection_computations} we have
\begin{align}
\mu_\alpha(\Gamma_{12})
    &\le \mu_\alpha
    \bigl(
    E_0(s) \cap B(l_1) \neq \varnothing,\, 2R \geq (l_2-l_1)
    \bigr)
    \nonumber\\
    &\le c_\alpha
    \bigl[
    l_1^2 (l_2 - l_1)^{-\alpha} + l_1 (l_2 - l_1)^{1 - \alpha}
    \bigr]
    \nonumber\\
    &=   c_\alpha l_1^{2 - \alpha}
    \bigl[
    (a - 1)^{- \alpha} + (a-1)^{1 - \alpha}
    \bigr],
\label{eq:up_bound_touching_two_annuli}
\end{align}
as we claimed for the upper bound. The lower bound follows the same reasoning,
beginning with the bound
\begin{equation*}
\mu_\alpha(\Gamma_{12})
    \geq \mu_\alpha
    \bigl(
    E_0(s) \cap B(l_1) \neq \varnothing,\, R \geq l_2
    \bigr).
\qedhere
\end{equation*}
\end{proof}
Using Lemma~\ref{lema:crossing_annulus} we are able to estimate
the decay of correlations for events depending on far away regions.
\begin{lema}[Decay of Correlations]
\label{lema:decay_of_correlations_HPS}
Take $\alpha > 1$. Let $K_1 = B(l_1)$ and $K_2 = {B(l_2)}^{\comp}$,
with $l_2 > l_1$.
Let $f_1$ and $f_2$ be real functions of $\xi$ such that
$|f_j| \le 1$ and $f_i$ depends only on sticks touching $K_i$.
Then, we have that
\begin{equation}
\label{eq:decay_of_correlations_HPS_0}
\bigl| \EE[f_1 f_2] - \EE[f_1]\EE[f_2] \bigr|
    \le 4u \mu_\alpha
    \bigl(s; E_0(s) \cap \partial B(l_j) \neq \varnothing, j = 1,2\bigr)
\end{equation}
In particular, if we take $l_2 = a l_1$ with $a \geq 2$ then
\begin{equation}
\label{eq:decay_of_correlations_HPS}
\bigl| \EE[f_1 f_2] - \EE[f_1]\EE[f_2] \bigr|
    \le  u c_\alpha l_1^{2-\alpha} a^{1 - \alpha}.
\end{equation}
\end{lema}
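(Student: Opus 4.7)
The plan is to exploit the independence structure of a Poisson point process under coloring. First I would partition the intensity measure into three pieces according to which of $K_1,K_2$ a stick touches: sticks touching only $K_1$, only $K_2$, or both $K_1$ and $K_2$. Writing $\xi = \cS_1 \sqcup \cS_2 \sqcup \cS_{12}$ for the corresponding decomposition of the soup, the coloring theorem makes $\cS_1,\cS_2,\cS_{12}$ independent PPPs. The key geometric observation is that a segment intersecting both $K_1=B(l_1)$ and $K_2=B(l_2)^{\comp}$ must, by connectedness, cross both $\partial B(l_1)$ and $\partial B(l_2)$, so $\cS_{12}$ has intensity at most $u\mu_\alpha(\Gamma_{12})$ with $\Gamma_{12}$ as in Lemma~\ref{lema:crossing_annulus}.

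Next I would write $f_1=f_1(\cS_1,\cS_{12})$ and $f_2=f_2(\cS_2,\cS_{12})$, and introduce a decoupled variable $f_2':=f_2(\cS_2,\cS_{12}')$, where $\cS_{12}'$ is an independent copy of $\cS_{12}$. Since $\cS_1,\cS_2,\cS_{12}'$ are jointly independent, $f_1$ and $f_2'$ are independent, and $f_2'$ has the same law as $f_2$. Therefore
\begin{equation*}
\EE[f_1]\,\EE[f_2]=\EE[f_1]\,\EE[f_2']=\EE[f_1 f_2'],
\end{equation*}
so that $|\EE[f_1 f_2]-\EE[f_1]\EE[f_2]|=|\EE[f_1(f_2-f_2')]|\le \EE|f_2-f_2'|$ using $|f_1|\le 1$.

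On the event $\{\cS_{12}=\varnothing\}\cap\{\cS_{12}'=\varnothing\}$ one has $f_2=f_2'$, whereas in general $|f_2-f_2'|\le 2$. A union bound and the elementary inequality $1-e^{-x}\le x$ give
\begin{equation*}
\EE|f_2-f_2'|\le 2\bigl(\PP(\cS_{12}\neq\varnothing)+\PP(\cS_{12}'\neq\varnothing)\bigr)\le 4\bigl(1-e^{-u\mu_\alpha(\Gamma_{12})}\bigr)\le 4u\mu_\alpha(\Gamma_{12}),
\end{equation*}
which is exactly~\eqref{eq:decay_of_correlations_HPS_0}. For the second bound, it suffices to plug in~\eqref{eq:crossing_annulus_simpler} from Lemma~\ref{lema:crossing_annulus}, which in the regime $a\ge 2$ gives $\mu_\alpha(\Gamma_{12})\le c_\alpha l_1^{2-\alpha}a^{1-\alpha}$, absorbing the factor $4$ into the constant $c_\alpha$.

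The argument is essentially a resampling/coupling trick and has no real obstacle; the only point that requires a bit of care is the geometric fact that a stick touching $B(l_1)$ and $B(l_2)^{\comp}$ necessarily meets both circles $\partial B(l_j)$, which justifies identifying $\cS_{12}$ with sticks of type $\Gamma_{12}$ and lets Lemma~\ref{lema:crossing_annulus} be applied directly.
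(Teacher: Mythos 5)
Your proof is correct. The paper itself does not prove \eqref{eq:decay_of_correlations_HPS_0}: it simply cites Lemma~6.1 of the authors' earlier ellipses paper and then invokes Lemma~\ref{lema:crossing_annulus} for the second bound, exactly as you do. Your resampling argument --- splitting $\xi$ into the independent PPPs of sticks touching only $K_1$, only $K_2$, or both, replacing $\cS_{12}$ by an independent copy in $f_2$, and bounding $\EE|f_2-f_2'|$ by $4\PP(\cS_{12}\neq\varnothing)\le 4u\mu_\alpha(\Gamma_{12})$ --- is the standard decoupling behind such correlation estimates and is almost certainly what the cited lemma does; in any case it is a complete, self-contained derivation where the paper offers only a reference. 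The two points that genuinely need care are both handled: the connectedness argument showing a stick meeting $B(l_1)$ and $B(l_2)^{\comp}$ must cross both circles (so $\cS_{12}$ has intensity bounded by $u\mu_\alpha(\Gamma_{12})$), and the finiteness of $\mu_\alpha(\Gamma_{12})$ for $\alpha>1$ guaranteed by Lemma~\ref{lema:crossing_annulus}, which makes the bound $1-e^{-x}\le x$ meaningful. Nothing further is needed.
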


\begin{proof}
Inequality \eqref{eq:decay_of_correlations_HPS_0} is proved in Lema~6.1
of~\cite{teixeira_ungaretti2017ellipses}. The
bound on~\eqref{eq:decay_of_correlations_HPS} follows
from~\eqref{eq:decay_of_correlations_HPS_0} and
Lemma~\ref{lema:crossing_annulus}.
\end{proof}

\begin{remark}
\label{remark:sipss_ergodic}
Lemma \ref{lema:decay_of_correlations_HPS} gives a
measure of how fast a HPS mixes. Like ellipses model, any HPS
with $\alpha > 1$ is ergodic with respect to translations on $\RR^2$.
\end{remark}

Another important `large scale' event is the one in which we cross a box of
height $l$ and width $kl$ from side to side using only one stick.
Define the event
\begin{equation*}
LR_1(l;k)
    := \bigl\{\exists s \in \xi;
        \textup{$E_0(s)$ intersects both $\{0\} \times [0,l]$ and
        $\{kl\} \times [0,l]$}\bigr\}.
\end{equation*}

Replicating the same argument
from~\cite[Proposition~5.1]{teixeira_ungaretti2017ellipses}
we can prove the following estimates on the probability of $LR_1(l;k)$.
\begin{prop}
\label{prop:crossing_box_with_one_stick}
If $\alpha >1$ and $k, l > 0 $,
then there is $c(\alpha) > 0$ such that:
\begin{equation}
\label{eq:crossing_box_with_one_stick}
1 - e^{ - c^{-1} u (k \wedge k^{-\alpha}) l^{2 - \alpha} }
    \le \PP\bigl(LR_1(l;k)\bigr)
    \le 1 - e^{ - c u (k^{2-\alpha}\vee k^{-\alpha}) l^{2 - \alpha}}.
\end{equation}
\end{prop}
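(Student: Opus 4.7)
The plan is to exploit the Poisson structure of $\xi$ directly. Setting $L_0 := \{0\}\times[0,l]$, $L_1 := \{kl\}\times[0,l]$, and $A := \{s \in S : E_0(s) \cap L_i \neq \varnothing \text{ for } i=0,1\}$, the number of sticks in $\xi$ belonging to $A$ is Poisson of parameter $u\mu_\alpha(A)$, whence $\PP(LR_1(l;k)) = 1 - e^{-u\mu_\alpha(A)}$. Thus the proposition reduces to the two-sided estimate
\[
c^{-1}(k \wedge k^{-\alpha})\, l^{2-\alpha} \;\le\; \mu_\alpha(A) \;\le\; c\,(k^{2-\alpha} \vee k^{-\alpha})\, l^{2-\alpha}.
\]

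For the upper bound, I would observe that any line segment intersecting both $L_0$ and $L_1$ must span the horizontal distance $kl$ and therefore satisfy $R \ge kl/2$. Combined with the event of intersecting $L_0$ (a segment of length $l$), Proposition~\ref{prop:intersection_computations} applied with $a=l$, $r=kl/2$ and $\alpha > 1$ yields
\[
\mu_\alpha(A) \;\le\; \mu_\alpha\bigl(E_0(s)\cap L_0 \neq \varnothing,\, R \ge kl/2\bigr) \;=\; \tfrac{4\alpha}{\pi(\alpha-1)}\, l\,(kl/2)^{1-\alpha} \;=\; c(\alpha)\, k^{1-\alpha}\, l^{2-\alpha},
\]
which is dominated by $c(\alpha)(k^{2-\alpha} \vee k^{-\alpha}) l^{2-\alpha}$ upon splitting into $k\le 1$ and $k \ge 1$.

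For the lower bound I would exhibit an explicit sub-family of sticks, each guaranteed to cross both $L_0$ and $L_1$, treating the regimes $k \le 1$ and $k \ge 1$ separately. In the regime $k\le 1$, take centers in $[0,kl]\times[l/3,2l/3]$, direction within a fixed cone $|V|\le \pi/6$, and half-length $R \ge kl$; a short planar-geometry check (control of the $y$-intercepts $z_2 - z_1\tan V$ and $z_2+(kl-z_1)\tan V$, together with the reach condition $R\cos V \ge kl/2$) shows every such stick meets both segments, and the $\mu_\alpha$-measure of this class is of order $kl^2\cdot(kl)^{-\alpha} = k^{1-\alpha}l^{2-\alpha} \ge k\, l^{2-\alpha}$. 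In the regime $k\ge 1$ one uses the same centers but a narrow directional cone $|V|\le 1/(4k)$ and $R \ge kl$, producing measure of order $kl^2\cdot k^{-1}\cdot(kl)^{-\alpha}=k^{-\alpha}l^{2-\alpha}$. Together this furnishes the claimed lower bound $c^{-1}(k\wedge k^{-\alpha})l^{2-\alpha}$.

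The only slightly finicky point, and the one I would be most careful with, is the geometric verification that the chosen sub-family actually crosses both $L_0$ and $L_1$; this is what forces the opening of the directional cone to scale as a constant when $k \le 1$ and as $1/k$ when $k \ge 1$. No further tools beyond Proposition~\ref{prop:intersection_computations} and the Poisson formula for $\PP(LR_1(l;k))$ are needed.
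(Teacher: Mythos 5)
The paper never writes out a proof of this proposition; it only points to the argument of Proposition~5.1 of the ellipses-model reference, and your argument is essentially that argument transplanted to sticks: the number of sticks meeting both segments is Poisson with mean $u\mu_\alpha(A)$, so $\PP(LR_1(l;k))=1-e^{-u\mu_\alpha(A)}$ and everything reduces to a two-sided bound on $\mu_\alpha(A)$; the upper bound follows from Proposition~\ref{prop:intersection_computations} with $a=l$, $r=kl/2$ together with $k^{1-\alpha}\le k^{2-\alpha}\vee k^{-\alpha}$ (geometric mean below the maximum); the lower bound from an explicit sub-family of crossing sticks. The route is correct and is the intended one. Two of your explicit constants need tightening, though neither affects the order of the estimate: with centers ranging over all of $[0,kl]$, the reach condition for hitting both vertical lines is $R\cos V\ge\max(z_1,\,kl-z_1)$, which can be as large as $kl$ rather than $kl/2$ (fix by restricting $z_1\in[kl/4,3kl/4]$ or by taking $R\ge 2kl$); and in the regime $k\le 1$ the cone $|V|\le\pi/6$ combined with the band $z_2\in[l/3,2l/3]$ does not keep the intercepts $z_2-z_1\tan V$ and $z_2+(kl-z_1)\tan V$ inside $[0,l]$ when $k$ is close to $1$, since $kl\tan(\pi/6)\approx 0.58\,l$; a narrower fixed cone such as $|V|\le\arctan(1/6)$ does the job. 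With those adjustments the construction yields measure of order $k^{1-\alpha}l^{2-\alpha}\ge k\,l^{2-\alpha}$ for $k\le1$ and $k^{-\alpha}l^{2-\alpha}$ for $k\ge1$, exactly as required.
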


\section{Exploration paths in IPS}
\label{sec:exploration_paths_IPS_ellipses}

The HPS obtained when we fix $\alpha = 2$ is the most interesting HPS for
us, due to its scale invariance. As stated earlier, we refer to it as IPS.
In this section we study box-crossing events in IPS by introducing a
family of exploration paths that attests whether there is a vacant crossing
or not. For a IPS $\xi$ of intensity $u$ and any $r > 0$ we define
\begin{equation*}
    \xi_r := \xi \I\{R \geq r\},
\end{equation*}
the restriction of the soup to sticks with radius at least $r$. The idea is to
consider exploration paths $\gamma^0_r$ that do not cross sticks from $\xi_r$
and then make $r \downarrow 0$. Before that, we present some important
concepts about curves and define what we mean by `crossing' a stick of
the soup.

\subsection{Aizenman and Burchard}
\label{sub:the_space_of_curves}

We use the same definition of curves as in Aizenman and
Burchard~\cite{aizenman1999holder}. We refer
to~\cite{aizenman1999holder} for a more complete description.

\begin{defi}[Space of Curves]
We denote by $\cS_\Lambda$ the set of continuous functions from $[0,1]$
to a closed subset $\Lambda \subset \RR^2$ modulo reparametrizations.
We endow $\cS$ with the metric
\begin{equation*}
d(\cC_1, \cC_2)
    = \inf\limits_{\psi_1, \psi_2} \sup\limits_{t \in [0,1]}
        |f_1(\psi_1(t)) - f_2(\psi_2(t))|
\end{equation*}
where the infimum is over all continuous monotone bijections
$\psi_1$ and $\psi_2$ on $[0,1]$. In this case,
$(\cS_\Lambda, d)$ is a complete separable metric space,
see~\cite{aizenman1999holder}.
\end{defi}
There are some useful ways to quantify the regularity of a curve $\cC$.
In~\cite{aizenman1999holder} there are four different quantities used
for this purpose: Hausdorff dimension, upper box dimension, tortuosity and
H\"older exponent, denoted by $\dim_{\cH}(\cC), \dim_B(\cC), \tau(\cC)$ and
$\alpha(\cC)$, respectively. They satisfy the relation
\begin{equation*}
\dim_{\cH}(\cC) \le \dim_B(\cC) \le \tau(\cC)
    = \alpha(\cC)^{-1}
\end{equation*}
and a sufficient condition for $\dim_B(\cC) = \tau(\cC)$ to hold is that the
curve $\cC$ has what Aizenman and Burchard call the \textit{tempered crossing
property}, a property that restricts how wiggly a curve can be at small scales.

The main purpose of paper \cite{aizenman1999holder} is to develop tools to
study regularity and tightness of a family of random curves. For
$z \in \RR^2$ and $0 < l_1 < l_2$ define the annulus
$D(z; l_1, l_2) := \{w \in \RR^2; l_1 < |z - w| \le l_2\}$.
When $z$ is the origin, we simply write $D(l_1, l_2)$.
We refer to $\partial B(z, l_1)$ and $\partial B(z, l_2)$ as the
internal and external boundaries of $D(z; l_1, l_2)$.

\begin{defi}
\label{defi:path_traverses_annulus}
Given an annulus $D(z; l_1, l_2)$ and a random continuous curve $\gamma$,
let $f: [0,1] \to \RR^2$ be a parametrization of $\gamma$.
We say that $D(z; l_1, l_2)$ is
\textit{traversed by $k$ separate segments of $\gamma$} if
there are $k$ disjoint intervals $[t^{i}_1, t^{i}_2] \subset [0,1]$
for $i = 1, 2, \ldots, k$ such that for every $i$ it holds $f(t^{i}_1)$
belongs one of the boundaries $\partial B(z, l_j)$, $f(t^{i}_2)$
belongs to the other boundary of $D(z;l_1, l_2)$ and
$|f(t) - z| \in (l_1, l_2)$ for $t\in (t^{i}_1, t^{i}_2)$.
(see Figure~\ref{fig:traversing_annulus})
\end{defi}

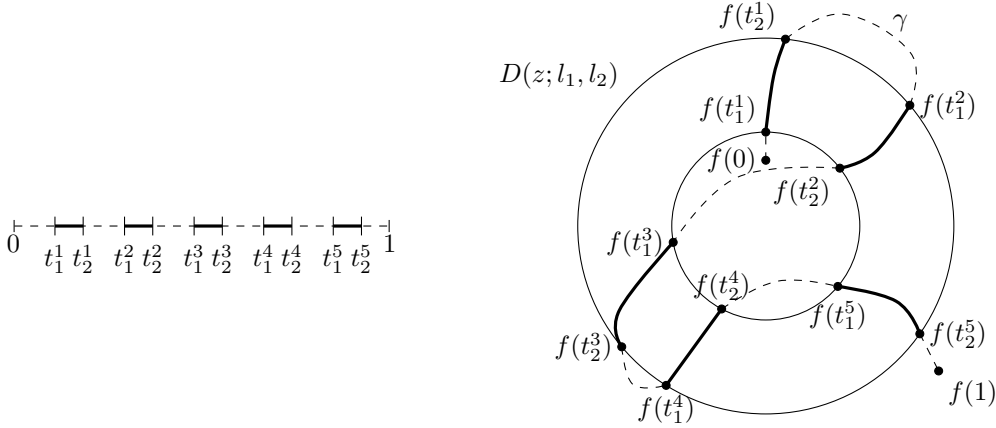
\begin{figure}
\centering
\begin{tikzpicture}[scale=.25]
\draw[|-|, dashed] (-40,0) node[below] {$0$} -- (-20,0) node[below] {$1$};
\draw (0,0) circle (5);
\draw (0,0) circle (10);
\draw[dashed] (90:3.5) -- (90:5);
\draw[very thick, tension=.7]
    plot [smooth] coordinates { (90:5) (87:8) (84:10) };
\draw[dashed, tension=.7]
    plot [smooth] coordinates {(84:10) (70:12) (50:12) (40:10)};
\draw[very thick, tension=.7]
    plot [smooth] coordinates {(40:10) (34:7) (38:5)}; 
\draw[dashed, tension=.7]
    plot [smooth] coordinates {(38:5) (120:3) (190:5)};
\draw[very thick, tension=.7]
    plot [smooth] coordinates {(190:5) (210:9) (220:10)}; 
\draw[dashed, tension=.7]
    plot [smooth] coordinates {(220:10) (230:11) (238:10)};
\draw[very thick, tension=.7]
    plot [smooth] coordinates {(238:10) (242:5)}; 
\draw[dashed, tension=.7]
    plot [smooth] coordinates { (242:5) (282:3) (320:5) };
\draw[very thick, tension=.7]
    plot [smooth] coordinates {(320:5) (330:8) (325:10) };
\draw[dashed] (325:10) -- (320:12);

\node[right] at (60:12.5) {$\gamma$};
\node at (-11,  8) {$D(z; l_1, l_2)$};
\filldraw (90:3.5) circle (6pt) node[left]        {$f(0)$};
\filldraw (90: 5) circle (6pt) node[above left]   {$f(t_1^{1})$};
\filldraw (84:10) circle (6pt) node[above left]   {$f(t_2^{1})$};
\filldraw (40:10) circle (6pt) node[right]        {$f(t_1^{2})$};
\filldraw (38: 5) circle (6pt) node[below left]   {$f(t_2^{2})$};
\filldraw (190:5) circle (6pt) node[left]         {$f(t_1^{3})$};
\filldraw (220:10) circle (6pt) node[      left]  {$f(t_2^{3})$};
\filldraw (238:10) circle (6pt) node[below]       {$f(t_1^{4})$};
\filldraw (242: 5) circle (6pt) node[above]       {$f(t_2^{4})$};
\filldraw (320: 5) circle (6pt) node[below=2]     {$f(t_1^{5})$};
\filldraw (325:10) circle (6pt) node[      right] {$f(t_2^{5})$};
\filldraw (320:12) circle (6pt) node[below right] {$f(1)$};

\def\intervalLength{1.5}
\def\intervalSepAndLength{3.7}
\foreach \x in {1,2,3,4,5}{
    \draw (-40   + \x*\intervalSepAndLength,0.5) ++(-\intervalLength,0) -- ++(0,-1) node[below] {$t_1^{\x}$};
    \draw (-40   + \x*\intervalSepAndLength,0.5) -- ++(0,-1) node[below] {$t_2^{\x}$};
    \draw[very thick] (-40 + \x*\intervalSepAndLength,0) -- ++(-\intervalLength,0);
}

\end{tikzpicture}
\caption{Traversing annulus $D(z; l_1, l_2)$ with $5$
    separate segments of $\gamma$.}
\label{fig:traversing_annulus}
\end{figure}

In the following we consider a family $\{\gamma_r\}_{r \in (0,1]}$
of random curves that represent exploration paths. Parameter $r$ can be
interpreted as the truncation level above which we keep sticks of the soup.
Aizenman and Burchard define two hypotheses on a family of random curves.
For family $\{\gamma_r\}_{r \in (0,1]}$, the \textit{Hypothesis H1}
from~\cite{aizenman1999holder} is

\medskip
\noindent \textbf{H1.} For all $k < \infty$ and for all
annulus $D(z; l_1, l_2)$ with $l_1 \le l_2 \le 1$ we have that
\begin{equation}
\label{eq:hypothesis_H1_original}
    \sup_{r;\; 0 < r \le l_1 \le l_2 \le 1}
\PP\biggl(
    \begin{array}{l}
	\text{$D(z; l_1, l_2)$ is traversed by $k$ separate} \\
	\text{segments of curve $\gamma_r$}
    \end{array}
\biggr)
    \le K_k \left( \frac{l_1}{l_2}\right)^{\eta(k)}
\end{equation}
for some $K_k < \infty$ and some $\eta(k) \to \infty$ as $k \to \infty$.

\medskip
A family of random curves that satisfies \textbf{H1} has many useful
properties, cf.~\cite{aizenman1999holder}. The main technical step of
this section is to prove that a family $\{\gamma^0_r\}$ of exploration
paths satisfy this property, which is done in Theorem~\ref{teo:hypothesis_H1_IPS}.
As a consequence of Theorems~1.1 and 1.2 from~\cite{aizenman1999holder},
this implies that the family $\{\gamma^0_r\}$ is tight and gives an upper bound
on the Hausdorff dimension of limiting curves.

Aizenman and Burchard also define a \textit{Hypothesis H2}, which is sufficient
to ensure that the Hausdorff dimension of limiting curves is strictly larger
than 1. Basdevant, Blanc, Curien and Singh~\cite{basdevant} define a
related concept, \textit{Property ($\varnothing$)},
that is easier to verify for our model.
Let $\cF$ be a random closed subset of $[0,1]^2$. In words, Property
($\varnothing$) states that the probability of $\cF$ intersecting $n$
well-separated balls is exponentially small in $n$. Given $\zeta>1$ we say that
a collection of closed balls $(B(x_i, r_i); 1 \le i \le n)$ with $x_i \in
[0,1]^2$ and $r_i > 0$ is $\zeta$-separated if their dilations by $\zeta$ are
disjoint, that is: $\cap_{i=1}^n B(x_i, \zeta r_i) = \varnothing$.

\medskip
\noindent \textbf{Property ($\varnothing$).} The set $\cF$ satisfies Property
($\varnothing$) if there are constants $\zeta > 1$, $Q>0$ and $0 < q < 1$ such
that: for every $\zeta$-separated collection $(B(x_i, r_i); 1 \le i \le n)$ we
have
\begin{equation}
\label{eq:property_varnothing}
\PP(\text{$\cF$ intersects $B(x_i, r_i)$ for every $1 \le i \le n$})
    \le Q q^n.
\end{equation}

Theorem~3.2 of~\cite{basdevant} states that if $\cF$ satisfies Property
$(\varnothing)$ then there is a constant $c>1$ such that a.s. every connected
closed subset $\cC \subset \cF$ that is not a point or $\varnothing$ has
$\dim_\cH (\cC) > s > 1$. In Proposition~\ref{prop:property_varnothing} we show
exploration paths $\{\gamma^0_r\}$ contain subpaths that satisfy Property
($\varnothing$), implying the lower bound on Hausdorff dimension of limiting curves.

We end this section stating a more precise version of
Theorem~\ref{teo:tight_regularity_exploration_paths}.
Recall that a family of
probability measures on a complete separable metric space is
relatively compact with respect to
convergence in distribution if and only if it is tight, 
cf.~\cite{billingsley2013convergence}. 

\begin{teo}
\label{teo:tight_regularity_exploration_paths_2}
Let $u \in (0, \bar{u})$ and consider a IPS $\xi$ of intensity $u$
and a box $B \subset \RR^2$. The family of exploration curves
$\{\gamma^0_r\}_{0 < r \le 1}$ in box $B$ a.s. has, for any
$\varepsilon > 0$, parametrizations $f_r: [0,1] \to \RR^2$
satisfying for $0 \le t_1 \le t_2 \le 1$ that
\begin{equation*}
|f_r(t_2) - f_r(t_1)|
    \le \kappa_{\varepsilon, r, B}(\xi) \cdot
        |t_1 - t_2|^{1/(2 - c(u) + \varepsilon)}
\end{equation*}
with $\kappa_{\varepsilon, r, B}$ a tight family of random variables
as $r \to 0$ and $c(u) > 0$, implying that the family
$\{\gamma_r^0\}$ is tight. Moreover, any subsequential limit
$\cC$ is supported on curves of $\cS_B$ with
$\alpha(\cC)^{-1} = \dim_B \cC \le 2 - c(u)$ and
$1 < \dim_\cH \cC \le 2 - 2 c(u)$.
\end{teo}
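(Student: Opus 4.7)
The plan is to reduce Theorem~\ref{teo:tight_regularity_exploration_paths_2} to the general machinery of~\cite{aizenman1999holder} and~\cite{basdevant}, using Theorem~\ref{teo:hypothesis_H1_IPS} (Hypothesis~\textbf{H1}) as the single non-trivial model-specific input for everything except the strict lower bound on $\dim_\cH$. Since Theorem~\ref{teo:hypothesis_H1_IPS} provides exactly~\eqref{eq:hypothesis_H1_original} with $\eta(k) = c(u) k \to \infty$, the first step is to invoke Theorem~1.2 of~\cite{aizenman1999holder} to obtain, for every $\varepsilon > 0$, random parametrizations $f_r$ whose Hölder exponent is at least $1/(2 - c(u) + \varepsilon)$ with Hölder constants $\kappa_{\varepsilon, r, B}$ forming a tight family as $r \downarrow 0$. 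Combined with the compactness of $B$ (so the curves take values in a compact subset of the plane), Arzelà–Ascoli then gives tightness of the laws of $\{\gamma^0_r\}$ on $(\cS_B, d)$.

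The second step is to read off the upper dimension bounds from the same Hypothesis~\textbf{H1}. Theorem~1.1 of~\cite{aizenman1999holder}, applied to any subsequential limit $\cC$, yields $\alpha(\cC)^{-1} \le 2 - c(u)$ almost surely, and \textbf{H1} also implies the tempered crossing property, which forces the equality $\dim_B \cC = \tau(\cC) = \alpha(\cC)^{-1}$. For the upper bound $\dim_\cH \cC \le 2 - 2c(u)$, the plan is to use Remark~\ref{remark:Hausdorff_corollary}: the image of $\gamma^0_r$ is contained in the Nacu–Werner carpet of $B$, whose almost sure Hausdorff dimension was computed in~\cite{nacu2011random} from the vacant $1$-arm exponent in~\eqref{eq:nacu_poly_bound}, and this bound passes to subsequential limits by closedness of the carpet.

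The remaining ingredient, and the one that does not follow automatically from~\cite{aizenman1999holder}, is the strict lower bound $\dim_\cH \cC > 1$. The plan is to verify \textbf{Property ($\varnothing$)} uniformly in $r$ and then apply Theorem~3.2 of~\cite{basdevant} to any subsequential limit. Concretely, given a $\zeta$-separated collection of closed balls $(B(x_i, r_i))_{i \le n}$, we choose $\zeta$ large enough that the enlarged annuli $A_i := D(x_i; r_i, (\zeta - 1) r_i / 2)$ are pairwise disjoint, and use the box-crossing property~\eqref{eq:vacant_crossing_bounded_away_IPS} together with an FKG gluing of four long-and-thin vacant crossings to show that each $A_i$ contains a vacant circuit around $B(x_i, r_i)$ with probability at least some $\delta = \delta(u, \zeta) > 0$. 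Whenever $\gamma^0_r$ visits every $B(x_i, r_i)$ no such circuit can occur in any $A_i$, so once the circuits in different $A_i$ are decorrelated the probability in~\eqref{eq:property_varnothing} is dominated by $(1 - \delta)^n$.

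The main obstacle will be precisely this decorrelation: a vacant circuit inside $A_i$ can be destroyed by a stick of $\xi$ centered far from $x_i$ but large enough to reach $A_i$, so the events in different annuli are not literally independent. The plan is to handle this exactly in the spirit of Section~\ref{sub:discussion}, by requiring each local vacant circuit to be threatened only by sticks whose centers lie in a slight enlargement of $A_i$, and then absorbing the residual long-stick contribution via the decay-of-correlations estimate of Lemma~\ref{lema:decay_of_correlations_HPS} together with~\eqref{eq:crossing_annulus_simpler}. The scale-invariance of $\mu_2$ should make the geometric series of these errors summable, so that the correction can be folded into the prefactor $Q$ of~\eqref{eq:property_varnothing} without affecting the exponential rate $q$.
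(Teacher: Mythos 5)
Your reduction of the tightness, H\"older continuity and upper dimension bounds to Theorems~1.1 and~1.2 of~\cite{aizenman1999holder} via Theorem~\ref{teo:hypothesis_H1_IPS} is exactly the paper's route, and invoking Theorem~3.2 of~\cite{basdevant} for the strict lower bound on $\dim_\cH$ is also the intended strategy. (Your detour through the Nacu--Werner carpet for the bound $\dim_\cH \cC \le 2-2c(u)$ would only give an exponent tied to the \emph{vacant} arm exponent $\bar\eta$ rather than the constant $c(u)$ appearing in \textbf{H1}; the paper reads this bound off from~\cite{aizenman1999holder} directly, and Remark~\ref{remark:Hausdorff_corollary} only claims the weaker $2-c(u)$ via the carpet. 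This is a cosmetic issue since the constants are not explicit.) The genuine problem is in your verification of Property~($\varnothing$).

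You propose to surround each ball $B(x_i,r_i)$ by a \emph{vacant} circuit and then decorrelate. But a vacant circuit in $D(x_i;r_i,(\zeta-1)r_i/2)$ is a decreasing event depending on arbitrarily large sticks centered arbitrarily far away, and the error you must control per ball --- the $\mu_2$-measure of sticks reaching the annulus $A_i$ from outside a $K$-fold enlargement --- is, by~\eqref{eq:crossing_annulus_simpler} with $\alpha=2$, of order $u/K$: a \emph{constant} for fixed $\zeta$, not a term in a convergent geometric series. The balls are only $\zeta$-separated, not arranged on geometric scales, so there is no summability to exploit, and pairwise decay of correlations (Lemma~\ref{lema:decay_of_correlations_HPS}) does not upgrade to an exponential bound on the intersection of $n$ such events; the paper itself remarks, in the proof of Proposition~\ref{prop:up_bound_arm_IPS}, that this decay is too slow even for the techniques of~\cite{ahlberg2018sharpness}, and there it has to introduce the random exploration of annuli to restore independence. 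The paper's Proposition~\ref{prop:property_varnothing} sidesteps all of this by using \emph{covered} circuits instead: four congruent rectangles inside $D(x_i;r_i,2r_i)$, each crossed lengthwise by a single stick \emph{centered inside that rectangle}. Because these events $\tilde A_i$ are measurable with respect to the Poisson process restricted to pairwise disjoint regions of the center space, they are exactly independent across $i$, each has probability bounded below by a constant $c(u)>0$ (the centered version of Proposition~\ref{prop:crossing_box_with_one_stick}), and a circuit of sticks blocks $\tilde\gamma^0_r$ outright since the exploration path never crosses a stick. This yields $(1-c)^{n-1}$ with no decorrelation step. You would need to either adopt this localization by covered circuits or supply a genuinely new argument for the joint bound on $n$ non-local vacant-circuit events; as written, the decorrelation step fails.
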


\subsection{Defining box-crossing events}
\label{sub:defining_box_crossing}

Let us discuss how one can define the event `there is a vacant crossing of a
box' in an IPS. For $z,w \in \RR^{2}$ let $[z, w]$ denote the line segment
with endpoints $z$ and $w$, and let $(z,w) := [z,w] \setminus \{z, w\}$ be its
\textit{interior}.

It is convenient to allow curves that may intersect $\cE_0$. Indeed, 
take any decreasing sequence $\varepsilon_n \downarrow 0$. Notice that if
$\cC_n$ is a sequence of curves contained in $\cV_{\varepsilon_n}$ that
converges to some curve $\cC$, then it is possible that
$\cC \cap \cE_0 \neq \varnothing$. For that reason, our defintion of a
curve crossing a segment must be more refined than simply intersecting the
segment.

Given $[z, w] \subset \RR^2$, denote by $\ell$ the infinite line supporting
$[z,w]$. Line $\ell$ divides the plane into two open half-planes, let us call
them $H_1$ and $H_2$.  Intuitively, if a curve $\cC$ passes through a point $o
\in (z,w)$ we can say that it crosses $[z,w]$ if it arrives by one of the
half-planes and leaves at the other one, see Figure
\ref{fig:non_crossing_limit_curve}.

Let $f$ be some parametrization of a curve $\cC$.
Consider $t_0 \in [0,1]$ with $f(t_0) \in \ell$.
The function $d : t \mapsto \dist(f(t), \ell)$ is
continuous and $t_0 \in d^{-1}(0)$. Let us define
\begin{equation*}
    t_0^- := \,\inf\{t \le t_0;\; [t,t_0] \in d^{-1}(0)\}
    \quad \text{and} \quad
    t_0^+ := \sup\{t \ge t_0;\; [t_0,t] \in d^{-1}(0)\},
\end{equation*}
so that $[t_0^-,t_0^+]$ is the largest interval such that
$f([t_0^-,t_0^+]) \subset \ell$.
We say that $t_0$ is a \textit{left-access point} if $t_0 = t_0^- \neq 0$ and
is a \textit{right-access point} if $t_0 = t_0^+ \neq 1$. Notice that if $t_0$
is a left-access point then for all $\delta>0$ we have
    \begin{equation}
    \label{eq:left-access}
    \text{$f\bigl((t_0-\delta, t_0)\bigr) \cap H_i \neq \varnothing$ for some
        $H_i$}.
    \end{equation}
        
We would like to identify from which half-space the function $f$ reached a
left-access point. However, since continuous curves can be rather erratic,
there might be no unique answer. We say that $t_0$ is a
\textit{left-oscillation point} if for all $\delta>0$, we have
    \begin{equation}
    \label{eq:left-oscillation}
    \text{$f\bigl((t_0-\delta, t_0)\bigr) \cap H_1 \neq \varnothing$ and 
    $f\bigl((t_0-\delta, t_0)\bigr) \cap H_2 \neq \varnothing$.}
    \end{equation}
Any left-access point that is not a left-oscillation point can be associated to
a unique half-plane, since for $\delta>0$ small there is $i_0$ such
that $f\bigl((t_0^--\delta, t_0^-)\bigr) \cap H_{i_0} \neq \varnothing$ but 
$f\bigl((t_0^--\delta, t_0^-)\bigr) \cap H_{3- i_0} = \varnothing$; let us define
$s^{-}(t_0^-) := H_{i_0}$. The discussion of right-access and right-oscillation
points is entirely analogous, replacing intervals $(t_0^--\delta, t_0^-)$ by intervals
$(t_0^+, t_0^++\delta)$ and denoting the half-plane associated to a right-access point
$t_0^+$ (that is not a right-oscilation point) by $s^+(t_0^+)$.

\begin{defi}[Crossing of a segment by a curve]
\label{defi:cross_segment_by_curve}
We say a curve $\cC$ \textit{crosses} $[z,w]$ if for some parametrization
$f$ either of the two scenarios happen:
\begin{itemize}
\item[\textbf{(S1)}]
    There is either a left-oscillation or a right-oscillation point $t_0 \in [0,1]$ with $f(t_0) \in (z,w)$.
\item[\textbf{(S2)}]
    There is $t_0 \in [0,1]$ with $f\bigl([t_0^-,t_0^+]\bigr) \subset (z,w)$ and 
        $s^-(t_0^-) \neq s^+(t_0^+)$.
\end{itemize}
Notice that $\cC$ crosses $[z,w]$ is independent of the parametrization.
\end{defi}

\begin{defi}[Vacant crossing for IPS]
Fix a rectangular box $B \subset \RR^2$ parallel to the coordinate axes.
For $r\geq 0$, define $\overline{LR}_r(B)$ as the
event that there is a curve $\cC$ contained in $B$ with one endpoint
on the left side and the other on the right side of $B$ that does not
cross any stick from $\xi_r$.
\end{defi}

\begin{remark}
\label{rem:measurability_box_crossing}
For any $r > 0$ the event $\overline{LR}_r$ is measurable,
since a.s. any box intersects a finite number of sticks from $\xi_r$.
Measurability of $\overline{LR}_0$ can be seen as a consequence of our results,
see Proposition~\ref{prop:LR0_measurable}.
\end{remark}

A useful consequence of Definition~\ref{defi:cross_segment_by_curve} is

\begin{prop}
\label{prop:curve_crosses_stick}
Let $\cC_n$ be a sequence of curves that do not cross $[z,w]$ and suppose that
$\cC_n$ converges to $\cC$ in the metric $d$. Then, $\cC$ does not cross $[z,w]$.
\end{prop}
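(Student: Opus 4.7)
The plan is to argue by contradiction: assume $\cC$ crosses $[z,w]$ and show that $\cC_n$ must cross $[z,w]$ for all sufficiently large $n$. Fix a parametrization $f$ of $\cC$ realizing scenario \textbf{(S1)} or \textbf{(S2)} from Definition~\ref{defi:cross_segment_by_curve}. The first step is to repackage both scenarios uniformly: extract times $a < b$ in $[0,1]$, a labeling $H_1, H_2$ of the two open half-planes determined by $\ell$ with $f(a) \in H_1$ and $f(b) \in H_2$, and an open set $U \subset \RR^2$ containing $f([a,b])$ and satisfying $U \cap \ell \subset (z,w)$. In \textbf{(S1)}, I pick $a, b$ both slightly before (or both after) the oscillation time $t_0$ on opposite sides of $\ell$ and let $U$ be a small ball around $f(t_0) \in (z,w)$. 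In \textbf{(S2)}, I pick $a$ slightly below $t_0^-$ on side $H_{s^-(t_0^-)}$ and $b$ slightly above $t_0^+$ on side $H_{s^+(t_0^+)}$, letting $U$ be a small neighborhood of the compact set $f([t_0^-,t_0^+]) \subset (z,w)$; continuity of $f$ and positivity of $\dist(f([t_0^-,t_0^+]),\{z,w\})$ make this possible.

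For each large $n$, the convergence $d(\cC_n, \cC) \to 0$ lets me choose parametrizations $f_n$ of $\cC_n$ and $\tilde{f}_n := f \circ \psi_n$ of $\cC$ with $\sup_t |f_n(t) - \tilde{f}_n(t)| < \varepsilon_n \to 0$. Setting $a_n := \psi_n^{-1}(a)$ and $b_n := \psi_n^{-1}(b)$, I get $\tilde{f}_n(a_n) = f(a) \in H_1$ and $\tilde{f}_n(b_n) = f(b) \in H_2$, hence for large $n$ also $f_n(a_n) \in H_1$, $f_n(b_n) \in H_2$, and $f_n([a_n,b_n])$ contained in a preassigned enlargement $U'$ of $U$ still satisfying $U' \cap \ell \subset (z,w)$. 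The simplifying idea here is that since ``crossing $[z,w]$'' is parametrization-invariant, I may freely work with $f$ and its reparametrizations rather than trying to reconcile them with the specific parametrizations that minimize the metric $d$.

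The main step is to locate a crossing time of $f_n$ inside $[a_n, b_n]$. Let $d_n(t) := \dist(f_n(t), \ell)$ and set $t_m := \sup\{t \in [a_n, b_n] : f_n(t) \in H_1\}$. Continuity and $f_n(b_n) \in H_2$ force $f_n(t_m) \in \ell$, and the definition of $t_m$ as a supremum rules out $[s, t_m] \subset d_n^{-1}(0)$ for any $s < t_m$, so $t_m^- = t_m$; moreover $f_n(t_m) \in U' \cap \ell \subset (z,w)$. I then split into two cases. If $t_m$ is a left-oscillation point, scenario \textbf{(S1)} is realized for $\cC_n$, contradicting the hypothesis. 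Otherwise $s^-(t_m) = H_1$, and an analogous analysis of the right endpoint $t_m^+$ of the maximal zero interval of $d_n$ through $t_m$ shows that just after $t_m^+$ the curve cannot re-enter $H_1$ by the supremum definition of $t_m$, so $s^+(t_m^+) = H_2$; together with $f_n([t_m,t_m^+]) \subset U' \cap \ell \subset (z,w)$ this realizes scenario \textbf{(S2)} for $\cC_n$, again a contradiction.

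I expect the main obstacle to be the interplay between the $n$-dependent parametrizations that nearly minimize the metric $d$ and the fixed parametrization $f$ chosen to witness the crossing of $\cC$. This is resolved by transporting $a, b$ through $\psi_n$ and exploiting the parametrization-invariance of the crossing property. A minor but essential care point is verifying that the time $t_m$ produced above yields an honest crossing in the sense of \textbf{(S1)} or \textbf{(S2)} rather than merely an intersection of $f_n$ with $\ell$; this is precisely what forces the dichotomy based on whether $t_m$ is a left-oscillation point.
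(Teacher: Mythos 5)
Your proof is correct and follows essentially the same route as the paper: argue by contradiction, use uniform convergence of (re)parametrizations to trap $f_n$ on a parameter interval whose image lies in a neighborhood meeting $\ell$ only inside $(z,w)$ and whose endpoints lie in opposite half-planes, and conclude that $\cC_n$ must cross $[z,w]$. Your unified treatment of \textbf{(S1)}/\textbf{(S2)} and the explicit supremum construction of the crossing time $t_m$ simply make rigorous the step the paper dispatches with ``otherwise every time $f_n$ intersected $(z,w)$ it would enter and leave by the same half-plane.''
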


\begin{proof}
Since $\cC_n$ converges to $\cC$, we have a sequence of parametrizations
$f_n$ of the curves $\cC_n$ such that $f_n \to f$ uniformly,
where $f$ is a parametrization of $\cC$. Suppose by contradiction that
$\cC$ crosses $[z, w]$. We check either scenario separately.

\medskip
\noindent
\textbf{Scenario (S1).} There is some oscillation point, say $t_0$
is a right-oscillation point with $f(t_0) \in (z,w)$.
Define $\varepsilon := \tfrac{1}{4} \dist(f(t_0), \{z,w\})$ and take
$\delta > 0$ sufficiently small so that
$f\bigl([t_0, t_0+\delta]\bigr) \subset B(f(t_0), \varepsilon)$.
By uniform convergence, we have for $t \in [t_0, t_0+\delta]$ and $n$
large enough that
\begin{equation*}
\dist(f_n(t_0), f_n(t))
    \le \dist(f_n(t_0), f(t_0)) + \dist(f(t_0), f(t)) + \dist(f(t), f_n(t))
    \le 3 \varepsilon,
\end{equation*}
which implies $f_n([t_0, t_0+\delta]) \subset B(f(t_0), 3\varepsilon)$. Since
$t_0$ is a right-oscillation point, we can find points
$t_i \in [t_0,t_0+\delta]$ with $f(t_i) \in H_i$ for $i=1,2$ with $t_1 < t_2$
and taking $n$ larger if needed, we have $f_n(t_i) \in H_i$. Since
$f_n([t_1,t_2])$ is a connected subset of $B(f(t_0), 3\varepsilon)$ that
intersects both $H_1$ and $H_2$, it must intersect $(z,w)$. Finally, notice
that $f_n$ must contain either an oscillation point or a point 
$\tilde{t} \in [t_1, t_2]$ with $f\bigl([\tilde{t}^-,\tilde{t}^+]\bigr)$ and
$s_n^-(\tilde{t}^-) \neq s_n^+(\tilde{t}^+)$, otherwise every time $f_n$
intersected $(z,w)$ it would enter and leave by the same half-plane. We have
reached a contradiction.

\medskip
\noindent
\textbf{Scenario (S2).} There is $t_0 \in [0,1]$ with
$f\bigl([t_0^-,t_0^+]\bigr) \subset (z,w)$ and $s^-(t_0^-) = H_1$ and
$s^+(t_0^+) = H_2$, without loss of generality.
Define
$\varepsilon = \tfrac{1}{4} \, \dist(f([t_0^-, t_0^+]), \{z, w\})$.
Taking $\delta>0$ small, we have that
\begin{equation*}
    f([t_0^- - \delta, t_0^+ + \delta])
    \subset f([t_0^-, t_0^+])^{+\varepsilon}
    := \{y \in \RR^2;\ \dist(y, f([t_0^-, t_0^+])) \le \varepsilon\}.
\end{equation*}
As before, using uniform convergence we can ensure
$f_n\bigl([t_0^- -\delta, t_0^+ +\delta]\bigr) \subset f([t_0^-, t_0^+])^{+2\varepsilon}$
for large $n$.
By definition of $t_0$, we can find $t_1 \in (t_0^- - \delta)$ and $t_2 \in
(t_0^+ + \delta)$ with $f(t_i) \in H_i$ and taking $n$ sufficiently large, we
can ensure that $f_n(t_i) \in H_i$ for $i=1,2$. Finally, notice that $[z, w]$
divides $f([t_1, t_2])^{+2\varepsilon}$ into two connected components, like in
scenario \textbf{(S1)}. Analogously, since $\cC_n$ does not cross $[z,w]$ by
hypothesis, we have a contradiction.
\end{proof}

\begin{figure}
\centering
\begin{tikzpicture}[scale=.85]
\draw[|-|] (-13,0) node[below] {$w$} -- (-5, 0) node[below] {$z$};

\filldraw (-10,0) circle (2pt);
\filldraw (-9.6,-.2) circle (2pt);
\filldraw (-9.4,.4) circle (2pt);
\node at (-10.6,-.4) {$f(t_0)$};
\draw[dashed] (-10,0) circle (1.3);

\draw[line width=2] 
    (-10,0) -- ++(.2,.2) --  ++(.2,-.4) node[below] {$f(t_1)$}
    --  ++(.2,.6) node[above left] {$f(t_2)$};

\draw[|-|] (-4,0) node[below] {$w$} -- (4, 0) node[below] {$z$};
\draw[dashed] (2,1) -- (-2,1) 
    node[above] {$f([t_0^-, t_0^+])^{+2\varepsilon}$}
    arc (90:270:1) -- (2,-1) arc (-90:90:1);
\draw[line width=2] (-2,0) -- (2,0);

\draw[line width=2, tension=.7] 
    plot [smooth] coordinates
    {(1.2,0) (1,.4) (1,.7)} node[right] {$f(t_2)$};
    \filldraw (1,.7) circle (2pt);
\draw[line width=2, tension=.7] 
    plot [smooth] coordinates
    {(-1.3,0) (-1.5, -.2)(-1,-.4) (-1,-.7)} node[right] {$f(t_1)$};
    \filldraw (-1,-.7) circle (2pt);
\end{tikzpicture}
\caption{Constructions on Proposition \ref{prop:curve_crosses_stick}}
\label{fig:non_crossing_limit_curve}
\end{figure}
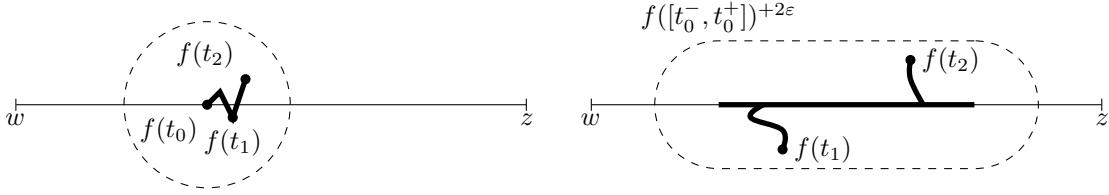

\begin{coro}
\label{coro:curve_crosses_stick_soup}
Let $\cC_n$ be a sequence of curves that do not cross any stick of a collection
$\cE^{(n)} = \{[z_i, w_i]\}$. Also, suppose that $\cC_n$ converge to $\cC$
in the metric $d$ and that $\cE^{(n)}$ is increasing. Then, $\cC$ does not
cross sticks from $\cE = \cup \cE^{(n)}$.
\end{coro}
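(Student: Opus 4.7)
The plan is to reduce the corollary to Proposition~\ref{prop:curve_crosses_stick} by exploiting the monotonicity of the collections $\cE^{(n)}$. The key observation is that while the individual curves $\cC_n$ are only required to avoid crossing sticks in the corresponding $\cE^{(n)}$, monotonicity guarantees that any fixed stick of $\cE$ is eventually avoided by all but finitely many curves of the sequence, after which the single-segment statement applies verbatim.

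First, I would fix an arbitrary stick $[z,w] \in \cE = \cup_n \cE^{(n)}$. By definition of the union, there exists $n_0$ such that $[z,w] \in \cE^{(n_0)}$, and by the hypothesis that the collections are increasing, $[z,w] \in \cE^{(n)}$ for every $n \geq n_0$. Therefore, by assumption, none of the curves in the tail $(\cC_n)_{n \geq n_0}$ cross $[z,w]$.

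Next, I would note that the tail sequence $(\cC_n)_{n \geq n_0}$ still converges to $\cC$ in the metric $d$, since convergence is preserved under removing finitely many terms. Applying Proposition~\ref{prop:curve_crosses_stick} to this tail sequence and the single segment $[z,w]$ yields immediately that $\cC$ does not cross $[z,w]$. Since $[z,w]$ was an arbitrary stick of $\cE$, the conclusion follows.

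This argument is essentially bookkeeping: all the analytical content (the construction with $\varepsilon$-neighborhoods of $f([t_0^-,t_0^+])$, the use of uniform convergence to locate $f_n(t_i)$ in the correct half-plane $H_i$, and the case analysis on scenarios \textbf{(S1)} and \textbf{(S2)}) is already packaged inside Proposition~\ref{prop:curve_crosses_stick}, so no new obstacle arises. I do not foresee a hard step, but the one subtle point worth emphasizing is that monotonicity is truly needed here: without it, a stick appearing in $\cE^{(n)}$ for only finitely many $n$ would not yield a tail sequence of curves that avoid it, and the reduction to the single-segment statement would fail.
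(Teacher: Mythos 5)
Your proof is correct and matches the argument the paper has in mind: the corollary is stated without an explicit proof precisely because it follows from Proposition~\ref{prop:curve_crosses_stick} by the reduction you describe (fix a stick, use monotonicity to pass to a tail of the sequence that avoids it, apply the proposition). Your remark that monotonicity is what makes the tail argument work is the right subtlety to flag.
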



\begin{remark}
\label{rem:topological_issue}
By definition, on event $\overline{LR}_r(B)$ there is certainly some curve
$\cC_r$ that makes the crossing of $B$ without crossing sticks in $\xi_r$.
However, it is possible that we are on event $\cap_{r > 0} \overline{LR}_r(B)$
but not on event $\overline{LR}_0(B)$. There are configurations with
curves $\cC_r$ that do not converge to a curve when $r \to 0$. This phenomenon
is related to the fact that the intersection of non-empty compact
path-connected sets in the plane is a continuum, i.e., compact and connected,
but is not necessarily path-connected. For a concrete example, consider the
topologist's sine curve
\begin{equation*}
K := \{(x, \sin (1/x)); \ 0 < x \le 1\} \cup (\{0\}\times[-1,1])
\end{equation*}
and notice $K = \cap_{r>0} K^{+r}$, where
$K^{+r} := \{y \in \RR^2;\ \dist(y, K) \le r\}$.
For box $B = [0,1]\times[-1,1]$ we can easily build a path
connecting its left and right sides entirely contained in $B\cap K^{+r}$
for a fixed $r>0$. However, there is no path connecting
$\{0\}\times [-1,1]$ to $(1, \sin 1)$ contained in $K$.
This represents no real issue, since almost surely events
$\cap_{r > 0} \overline{LR}_r(B)$ and $\overline{LR}_0(B)$ are equal, see
Proposition~\ref{prop:LR0_measurable}.
\end{remark}

\subsection{Exploration paths and their properties}
\label{sub:exploration_paths_and_their_properties}

We discuss exploration processes for both the ellipses model and the IPS.
Such processes are easier to define on ellipses model and explorations on
IPS can be seen as limits of explorations on ellipses models with minor axis
tending to 0.

Recall that $\xi_{r}$ is the restriction of $\xi$ to sticks with
radius at least $r$.
In this way, we have a coupling for all $\xi_r$ with $r \geq 0$. Also, recall
that for fixed $r>0$ we can obtain either a collection of ellipses or
sticks by considering $\cE_r(\xi_r)$ and $\cE_0(\xi_r)$. We do
not define an exploration path on the full soup; instead, it will be seen
as a limit from the excursions on $\cE_0(\xi_r)$.

\medskip
\noindent
\textbf{Ellipses model.}
On this framework, let us define exploration paths for $\cE_r^l = \cE_l(\xi_r)$
where $l \in (0,r]$. Fix a box $B$ (with sides parallel to $xy$ axes,
without loss of generality) and impose that the bottom side of $B$ is
covered, while its left side is vacant. The exploration path
$\gamma_r^l$ is a random path defined for each configuration of the
ellipses model. It begins at the lower left corner of
$B$ and follows the interface between the covered set $\cE_r^l$
and its complement, keeping the covered region always at its right.
The path is allowed to walk over the left and bottom sides of the
box and also over the arcs of ellipses intersecting the box until
it meets either the top side or the right side of $B$
(see Figure~\ref{fig:exploration_ellipses}). Since we know that for
any $l \in (0, r]$ almost surely there is only a finite number of
ellipses in $\cE_r^l$ intersecting $B$, the exploration paths
$\gamma_r^l$ are well-defined. Also, we can parametrize curve
$\gamma_r^l$ to ensure it has no self-intersections.

\begin{figure}[ht]
\centering
\begin{tikzpicture}[baseline=(current bounding box.north),scale = 1.5,
   occupied/.style={pattern=north east lines,pattern color=gray!80}
]

\draw           (-2,-1  ) rectangle (2, 1); 
\fill[occupied] (-2,-1.3) rectangle (2,-1);
\draw[dashed]   (-2.3,-1) rectangle (-2,1);
\path[name path = bottom] (-2,-1) -- (2,-1); 
\path[name path = left]   (-2,-1) -- (-2,1); 
\path[name path = right]  ( 2,-1) -- ( 2,1); 

 \node at (0,-1.15) {Covered};
 \node[rotate = 90] at (-2.15,0) {Vacant};

 \path[name path = s1, rotate around={ 50:(-1.1, -1.1)}] (-1.1,-1.1) circle (1 and .1);
 \path[name path = s2, rotate around={-10:(-1.1, -0.5)}] (-1.1,-0.5) circle (1 and .1);
 \path[name path = s3, rotate around={ 15:(-1.7, -0.5)}] (-1.7,-0.5) circle (.7 and .1);
 \path[name path = s4, rotate around={ 90:(-1.1, -0.1)}] (-1.1,-0.1) circle (.3 and .1);
 \path[name path = s5, rotate around={  5:(   0,  0.2)}] (   0, 0.2) circle (1.2 and .1);
 \path[name path = s6, rotate around={100:( 1.2,  0.2)}] ( 1.2, 0.2) circle (.6 and .1);
 \path[name path = s7, rotate around={  3:( 1.6, -0.2)}] ( 1.6,-0.2) circle (.6 and .1);

\begin{scope}
\clip (-2,-1  ) rectangle (2, 1);

 \draw[rotate around={ 50:(-1.1, -1.1)}] (-1.1,-1.1) circle (1 and 0);
 \draw[dashed, rotate around={ 50:(-1.1, -1.1)}] (-1.1,-1.1) circle (1 and .1);
 \draw[rotate around={-10:(-1.1, -0.5)}] (-1.1,-0.5) circle (1 and 0);
 \draw[dashed, rotate around={-10:(-1.1, -0.5)}] (-1.1,-0.5) circle (1 and .1);
 \draw[rotate around={ 15:(-1.7, -0.5)}] (-1.7,-0.5) circle (.7 and 0);
 \draw[dashed, rotate around={ 15:(-1.7, -0.5)}] (-1.7,-0.5) circle (.7 and .1);
 \draw[rotate around={ 90:(-1.1, -0.1)}] (-1.1,-0.1) circle (.3 and 0);
 \draw[dashed, rotate around={ 90:(-1.1, -0.1)}] (-1.1,-0.1) circle (.3 and .1);
 \draw[rotate around={  5:(   0,  0.2)}] (   0, 0.2) circle (1.2 and 0);
 \draw[dashed, rotate around={  5:(   0,  0.2)}] (   0, 0.2) circle (1.2 and .1);
 \draw[rotate around={100:( 1.2,  0.2)}] ( 1.2, 0.2) circle (.6 and 0);
 \draw[dashed, rotate around={100:( 1.2,  0.2)}] ( 1.2, 0.2) circle (.6 and .1);
 \draw[rotate around={  3:( 1.6, -0.2)}] ( 1.6,-0.2) circle (.6 and 0);
 \draw[dashed, rotate around={  3:( 1.6, -0.2)}] ( 1.6,-0.2) circle (.6 and .1);

\draw[rotate around={110:(1,-1)}] ( 1,-1) circle (1 and 0);
\draw[dashed, rotate around={110:(1,-1)}] ( 1,-1) circle (1 and .1);
\draw[rotate around={60:(.7,-1)}] (.7,-1) circle (.6 and 0);
\draw[dashed, rotate around={60:(.7,-1)}] (.7,-1) circle (.6 and .1);
\draw[rotate around={-6:(-1.5,.6)}] (-1.5,.6) circle (.4 and 0);
\draw[dashed, rotate around={-6:(-1.5,.6)}] (-1.5,.6) circle (.4 and .1);
\end{scope}

\path [name intersections={of = s1 and bottom}];
	\coordinate  (A)  at (intersection-1);
\path [name intersections={of = s1 and s2}];
	\coordinate  (B)  at (intersection-2);
\path [name intersections={of = s2 and s3}];
	\coordinate  (F)  at (intersection-1);
	\coordinate  (DB) at (intersection-3);
    \coordinate  (C)  at (intersection-4);
\path [name intersections={of = s3 and left}];
	\coordinate  (DA) at (intersection-1);
	\coordinate  (D)  at (intersection-2);
\path [name intersections={of = s2 and left}];
	\coordinate  (E)  at (intersection-1);
	\coordinate  (DC) at (intersection-2);
\path [name intersections={of = s3 and s4}];
	\coordinate  (G)  at (intersection-1);
\path [name intersections={of = s4 and s5}];
	\coordinate  (H)  at (intersection-3);
\path [name intersections={of = s5 and s6}];
	\coordinate  (I)  at (intersection-1);
\path [name intersections={of = s6 and s7}];
	\coordinate  (J)  at (intersection-3);
\path [name intersections={of = s7 and right}];
	\coordinate  (K)  at (intersection-1);

\draw[very thick] (-2,-1) --
    (A) -- (B) -- (C) -- (D) --
    (DA) -- (DB) -- (DC) -- (E)
    plot [tension=.7, smooth] coordinates {(E) (-1.8, -.3) (F)}
    -- (G) arc (220:60:.1 and .3)
    plot [tension=.7, smooth] coordinates
    {(H) (0, .3) ($(I)+(-.15, .01)$) (I)};
\begin{scope} 
\newcommand*{\initialangle}{75}
\newcommand*{\finalangle}{-125}

%

\begin{scope}[shift={($(I) - (0,0)!5.20348pt!100:(\initialangle:.6 and .1)$)}]
\draw[very thick, rotate around={100:(0,0)}]
    (\initialangle:.6 and .1) arc (\initialangle:\finalangle:.6 and .1);
\end{scope}
\end{scope}
\draw[very thick] plot [tension=.7, smooth] coordinates {(J) (1.6, -.1) (K)};
\end{tikzpicture}
\hfill%
\begin{tikzpicture}[baseline=(current bounding box.north),scale = 1.5,
   occupied/.style={pattern=north east lines,pattern color=gray!80}
]

\draw           (-2,-1  ) rectangle (2, 1);
\fill[occupied] (-2,-1.3) rectangle (2,-1);
\draw[dashed]   (-2.3,-1) rectangle (-2,1);

 \node at (0,-1.15) {Covered};
 \node[rotate = 90] at (-2.15,0) {Vacant};

\clip (-2,-1  ) rectangle (2, 1);

 \draw[name path = s1, rotate around={ 50:(-1.1, -1.1)}] (-1.1,-1.1) circle (1 and 0);
 \draw[name path = s2, rotate around={-10:(-1.1, -0.5)}] (-1.1,-0.5) circle (1 and 0);
 \draw[name path = s3, rotate around={ 15:(-1.7, -0.5)}] (-1.7,-0.5) circle (.7 and 0);
 \draw[name path = s4, rotate around={ 90:(-1.1, -0.1)}] (-1.1,-0.1) circle (.3 and 0);
 \draw[name path = s5, rotate around={  5:(   0,  0.2)}] (   0, 0.2) circle (1.2 and 0);
 \draw[name path = s6, rotate around={100:( 1.2,  0.2)}] ( 1.2, 0.2) circle (.6 and 0);
 \draw[name path = s7, rotate around={  3:( 1.6, -0.2)}] ( 1.6,-0.2) circle (.6 and 0);

\draw[very thick] (-2,-1) -- (-1,-1) -- ++(50:.53) --
    ++(170:.83) -- (-2,-.6); 
\draw[very thick] (-2,-.33) -- ++(-10:.54) -- ++(15:.4) --
    ++(90:.42) -- ++(185:.12) -- ++(5:.12) -- ++(90:.1) --
    ++(-90:.1) -- ++(5:2.28) -- ++(100:.51) --
    ++(-80:1.03) -- (2,-.18); 

\draw[rotate around={110:(1,-1)}] ( 1,-1) circle (1 and 0);
\draw[rotate around={60:(.7,-1)}] (.7,-1) circle (.6 and 0);
\draw[rotate around={-6:(-1.5,.6)}] (-1.5,.6) circle (.4 and 0);
\end{tikzpicture}
\caption{Exploration paths for an ellipses model and its stick soup.
The latter exploration path can be seen as an exploration path for
ellipses of minor axis size equal to zero and can have self
intersections.}
\label{fig:exploration_ellipses}
\end{figure}

Following the exploration path $\gamma_r^l$, we have only two possible
outcomes. If $\gamma_r^l$ ends at the right side of $B$, then there is a
left-right vacant crossing of $B$. On the other hand, if $\gamma_r^l$ ends at
the top side there is a top-bottom covered crossing of the box. Intuitively, we
are looking for the lowest crossing of the box (or, if it does not exist, the
leftmost dual crossing). Notice also that curves
$\gamma_r^l$ do not touch the interior of any ellipse of the soup, with
the only exceptions being when it walks over the left side of $B$. Denote by
$\tilde{\gamma}_r^l$ the subpath of $\gamma_r^l$ that starts at the
last time $\gamma_r^l$ touches the left side of $B$. If path $\gamma_r^l$
does cross $B$ then subpath $\tilde{\gamma}_r^l$ is the lowest crossing of $B$.

\medskip
\noindent
\textbf{IPS.}
We can see the exploration path $\gamma_r^0$ as a degenerate case of the
ellipses model exploration path $\gamma_r^l$, in which $l=0$. Notice that
exploration paths $\gamma_r^l$ converge to a curve when $l\downarrow 0$.
This claim is quite easy to
believe and we do not provide a formal proof, just discuss
why the result is true. If you fix a finite configuration of
sticks $(s_i)$ in $B$ and take
\begin{equation*}
l < \min\{\dist(E_0(s_i), E_0(s_j)); \ E_0(s_i)\cap E_0(s_j) =\varnothing \ \forall i \neq j\},
\end{equation*}
then the covered connected components of $\cup_i E_0(s_i)$ and $\cup_i
E_l(s_i)$ are close to one another
(check Figure~\ref{fig:exploration_ellipses}).

Notice that curve $\gamma_r^0$ obtained in the limit can touch the left sides
of $B$ and can walk over its bottom side and also over the sticks intersecting
the box. Such curves can have self-intersections, but any point is visited at
most four times. Similarly to paths $\gamma_r^l$ with $l>0$, curve $\gamma_r^0$ cannot
cross any stick of the soup in general (except sticks touching the left side of $B$)
and defining $\tilde{\gamma}_{r}^{0}$ as the subcurve of $\gamma_r^0$
starting from the last visit to the left side of $B$, we obtain that
\begin{equation}
\label{eq:path_from_last_left}
\text{$\tilde{\gamma}_{r}^{0}$ is a path that does not cross any stick of
    the soup}.
\end{equation}

If $\tilde{\gamma}_r^0$ ends on the top side, we can conclude that there was no
left-right vacant crossing of the box, even in the non-truncated IPS. On the
other hand, if $\tilde{\gamma}_r^0$ ends on the right side, we can only
conclude there was a crossing when ignoring sticks with
radius smaller than $r$, i.e., that event $\overline{LR}_r(B)$ happened.
Our goal now is to ensure that the family of curves $\{\gamma_r^0\}_{0 < r
\le 1}$ satisfies \textbf{H1}, which is done in
Section~\ref{sub:proof_of_hypothesis_h1}. As a consequence, 
Theorem~\ref{teo:tight_regularity_exploration_paths} holds, implying the
family is tight and its subsequential limits are regular.
Moreover, it follows that
\begin{prop}
\label{prop:LR0_measurable}
We have that $\overline{LR}_0(B) = \cap_{r>0} \overline{LR}_r(B)$ almost
    surely.
\end{prop}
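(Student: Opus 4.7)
The plan is as follows. The inclusion $\overline{LR}_0(B) \subset \bigcap_{r>0} \overline{LR}_r(B)$ is immediate, since $\xi_r \subset \xi$ and therefore any curve in $B$ that does not cross sticks of $\xi$ does not cross sticks of $\xi_r$ either. The content of the statement lies in the reverse inclusion, which I would establish almost surely on the event $E := \bigcap_{r>0}\overline{LR}_r(B)$. Fix a decreasing sequence $r_n \downarrow 0$. On $E$, for each $n$ the subpath $\tilde{\gamma}_{r_n}^0$ is, by~\eqref{eq:path_from_last_left}, a continuous curve contained in $B$ that begins on the left side of $B$, terminates on the right side of $B$ (since $\tilde{\gamma}_{r_n}^0$ cannot end on the top on $\overline{LR}_{r_n}(B)$), and does not cross any stick of $\xi_{r_n}$.

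The next step is to extract a limit curve as $n\to\infty$. By Theorem~\ref{teo:tight_regularity_exploration_paths_2}, the family $\{\gamma_r^0\}_{0<r\le 1}$ admits parametrizations that are uniformly Holder continuous on $B$ with a tight multiplicative constant. Restricting these parametrizations to the subpaths $\tilde{\gamma}_r^0$ yields a uniformly equicontinuous bounded family, which is relatively compact in $(\cS_B, d)$ by Arzela-Ascoli. Passing to a subsequence (still indexed by $n$), I may assume that $\tilde{\gamma}_{r_n}^0$ converges in $(\cS_B, d)$ to some random curve $\cC \subset B$; extracting a further subsequence if needed, I may also assume that the start and end points of the $\tilde{\gamma}_{r_n}^0$ converge, necessarily to points on the closed left and right sides of $B$, and these then serve as the endpoints of $\cC$.

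Finally, to show that $\cC$ witnesses the event $\overline{LR}_0(B)$, it remains to verify that $\cC$ does not cross any stick of the full soup $\xi$. Since $r_n$ decreases, the collection $\cE^{(n)} := \xi_{r_n}$ is increasing in $n$, and almost surely $\bigcup_n \cE^{(n)} = \xi$ because every stick has strictly positive radius. Applying Corollary~\ref{coro:curve_crosses_stick_soup} to the curves $\tilde{\gamma}_{r_n}^0$ and the increasing collections $\cE^{(n)}$ then gives that $\cC$ does not cross any stick of $\xi$, completing the proof. The conceptual obstacle is precisely the phenomenon illustrated by the topologist's sine curve in Remark~\ref{rem:topological_issue}: the naive limit of a nested family of path-connected sets need not remain path-connected. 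The argument succeeds because Theorem~\ref{teo:tight_regularity_exploration_paths_2} provides actual Holder continuous parametrizations (so the limit is a genuine curve, not merely a continuum), while Corollary~\ref{coro:curve_crosses_stick_soup} ensures that the non-crossing property is preserved in the limit for every stick of the full soup simultaneously.
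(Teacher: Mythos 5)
Your argument is correct and follows essentially the same route as the paper's proof: both extract an a.s.\ subsequential limit of the subpaths $\tilde{\gamma}_{r}^0$ using the H\"older/tightness estimates of Theorem~\ref{teo:tight_regularity_exploration_paths_2}, and then invoke Corollary~\ref{coro:curve_crosses_stick_soup} together with~\eqref{eq:path_from_last_left} to see that the limit curve crosses $B$ from left to right without crossing any stick of the full soup. Your write-up merely makes explicit the details (monotonicity of $\xi_{r_n}$, convergence of endpoints, the role of Remark~\ref{rem:topological_issue}) that the paper leaves implicit.
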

\begin{proof}
The family $\{\tilde{\gamma}_r^0\}_{0 < r \le 1}$ of paths starting from the last
visit to the left side of $B$ also satisfies
Theorem~\ref{teo:tight_regularity_exploration_paths}. By
Corollary~\ref{coro:curve_crosses_stick_soup} and
\eqref{eq:path_from_last_left}, any subsequential limit is a random curve
$\gamma$ that does not cross any sticks of the soup and crosses $B$ from
left to right. This justifies our claim that $\overline{LR}_0(B)$ is the
same event as $\cap_{r>0} \overline{LR}_r(B)$ apart from a zero measure set.
\end{proof}

We end this section collecting some useful facts about exploration paths
$\gamma_r^l$. In order to prove some properties of exploration paths, we
introduce notation for paths and concatenation. If $p, q \in \gamma$, we denote
by $p\gamma q$ the curve from $p$ to $q$ following $\gamma$. For points $p, q$
in the boundary of a convex $C \subset \RR^2$ with non-empty interior, we
denote by $pCq$ the clockwise arc of $\partial C$ from $p$ to $q$ and by
$pC^-q$ the anti-clockwise arc from $p$ to $q$. Concatenation of paths is
represented by combining these notations, eg. $a\gamma bEc$.

On the course of proving \textbf{H1}, we will be interested in checking if
exploration path $\gamma_r^l$ traverses a fixed annulus $D = D(z; l_1, l_2)$
many times or not, as in Definition~\ref{defi:path_traverses_annulus}. Define
$\partial_i D := z + \partial B(l_i)$ for $i=1,2$.

\begin{defi}[Entering arms]
Fix an annulus $D$.
Let $f:[0,1] \to \RR^2$ be an injective parametrization of $\gamma =
\gamma_r^l$ with $f(0)$ being the lower left corner of $B$. We say a
segment of the curve $f([t_1, t_2])$ with $0 \le t_1 < t_2 \le 1$ is an
\textit{entering arm} for $D$ if $f(t_1) \in \partial_2 D$, $f((t_1, t_2))$
is in the interior of $D$ and $f(t_2) \in \partial_1 D$. Interchanging the
roles of $\partial_1 D$ and $\partial_2 D$ we have the definition of an
\textit{exiting arm} for $D$. An ellipse $E$ is used by an entering (or
exiting) arm $f([t_1, t_2])$ if their intersection is non-empty.
\end{defi}

The next lemma shows that an exploration path that uses an ellipse must obey
some restrictions of topological character. The most important property in what
follows is described by \textbf{P3} below. It ensures that within a given
annulus each stick can participate in at most two entering arms, with equality
only if the stick intersects the inner boundary twice
(see also Figure~\ref{fig:P3_construction}).

\begin{lema}[Topological properties of exploration paths]
\label{lema:topological_properties}
Fix $0 < l \le r$. Let $\gamma = \gamma_r^l$ be the
exploration path of the box $B$ and $E$ be any ellipse of the process
intersecting $B$. The following properties hold
\begin{itemize}
\item[\textup{\textbf{P1.}}] Ellipse $E$ is touched in clockwise
order by the exploration path $\gamma$.

\item[\textup{\textbf{P2.}}] Fix an annulus $D = D(z; l_1, l_2)$  with
$D \cap E \neq \varnothing$. Let $\cP_1$ and $\cP_2$ be two entering
arms for $D$ that use $E$ and let $v_i$ be the first
visit of $\cP_i$ to $E$. If $\cP_1$ and $\cP_{2}$ exist then
$\partial_1 D$ must intersect $v_1 E v_{2}$ and
$v_{2} E v_{1}$.

\item[\textup{\textbf{P3.}}] If $E$ is used by $\gamma$ to 
enter $D(z; l_1, l_2)$ then $E$ can only be used by an entering
arm of $\gamma$ again if $\partial_1 D\backslash E$ has
two connected components. Moreover, $E$ cannot be used by
three entering exploration arms. Finally, this property also holds
for $l=0$.
\end{itemize}
\end{lema}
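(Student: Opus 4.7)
My plan is to prove the three properties in order, with \textbf{P1} being direct from the definition, \textbf{P2} carrying the topological heart of the argument, and \textbf{P3} following from \textbf{P2} combined with an intersection-counting pigeonhole step.

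For \textbf{P1}, I would unwind the definition of the exploration path: $\gamma$ follows the boundary of the covered set while keeping the covered region on its right. Since the interior of every ellipse $E$ of the soup is contained in the covered set, whenever $\gamma$ traces a piece of $\partial E$ the interior of $E$ lies immediately on the right of $\gamma$, which fixes the local orientation along $\partial E$ to be clockwise. If $E$ is visited at two distinct times $t<t'$, the locations on $\partial E$ visited move strictly clockwise, as required.

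For \textbf{P2}, assume without loss of generality that $\gamma$ reaches $v_1$ before $v_2$. Between visiting $v_1$ and $v_2$ the path completes $\cP_1$, hitting $\partial_1 D$ at some point $y_1$, and then starts $\cP_2$ from $\partial_2 D$. By \textbf{P1}, all visits of $\gamma$ to $\partial E$ in this time window lie on the clockwise arc $v_1 E v_2$. I would then track the sub-path of $\gamma$ starting at $v_1$: it moves clockwise along $\partial E$ until either it reaches $\partial_1 D$ while still on $\partial E$ (in which case the exit point is the desired element of $\partial_1 D \cap v_1 E v_2$) or it leaves $\partial E$ at some switching point $w_1 \in v_1 E v_2$ along another ellipse $E'$ and eventually reaches $\partial_1 D$ at $y_1 \notin \partial E$. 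In the latter case, close up the clockwise arc $v_1 E w_1$ with the sub-path from $w_1$ to $y_1$ and a short arc of $\partial_1 D$ to form a Jordan arc in $\overline{D}$; the fact that $\gamma$ has to re-enter $D$ from $\partial_2 D$ to start $\cP_2$ and then reach $v_2 \in v_1 E v_2$ forces this Jordan arc, together with an auxiliary arc along $\partial E$ continued clockwise past $w_1$, to produce a crossing of $\partial_1 D$ inside $v_1 E v_2$ via a planar separation argument in the annulus $D$. The symmetric statement for the arc $v_2 E v_1$ follows by running the same analysis on the portion of $\gamma$ starting at $v_2$ and returning cyclically to the first re-visit of $\partial E$ after completing $\cP_2$.

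For \textbf{P3}, the first clause is immediate from \textbf{P2}: two entering arms using $E$ force $\partial_1 D$ to meet each of the two arcs into which $\{v_1,v_2\}$ split $\partial E$, so $\partial_1 D \setminus E$ has at least two connected components. To rule out three arms, suppose first visits $v_1,v_2,v_3$ lie on $\partial E$ in clockwise order. Three pairwise applications of \textbf{P2} place them in three pairwise distinct connected components of $\partial E \setminus \partial_1 D$. This is impossible by counting: for a stick ($l=0$), $\partial E$ is a segment meeting the circle $\partial_1 D$ in at most two points, giving at most three components of $\partial E \setminus \partial_1 D$; since the middle component sits inside $B(z,l_1)$ and the $v_i$ are in $D$, only two components are available and pigeonhole gives a contradiction. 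For an ellipse ($l>0$), $\partial E \cap \partial_1 D$ has an even number of points by transversality, the components of $\partial E \setminus \partial_1 D$ alternate between inside and outside $B(z,l_1)$ along $\partial E$, and the $v_i \notin \overline{B(z,l_1)}$ can sit in at most half of them, i.e.\ in at most two components, so pigeonhole again applies. The main obstacle I expect is the topological bookkeeping in \textbf{P2} when the exploration leaves $\partial E$ before reaching $\partial_1 D$, since then the crossing of $\partial_1 D$ inside $v_1 E v_2$ must be produced by a planar argument rather than read off from $\gamma$ directly; this is presumably the content to which Figure~\ref{fig:P3_construction} is devoted.
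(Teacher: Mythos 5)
Your overall strategy --- orientation for \textbf{P1}, Jordan-curve separation for \textbf{P2}, a pigeonhole count for \textbf{P3} --- matches the paper's, and your \textbf{P3} counting (placing the first-visit points $v_i$ in distinct components of $\partial E\setminus \partial_1 D$ and bounding the number of components available outside $B(z,l_1)$) is essentially equivalent to the paper's appeal to the fact that $B(l_1)\setminus E$ has at most two connected components. The problem is that the two steps carrying the actual topological content are left unproved. For \textbf{P1}, the nontrivial claim is not the local orientation while $\gamma$ runs along $\partial E$ (which your argument does give), but that after $\gamma$ leaves $\partial E$ and wanders away it can only return at a point lying clockwise after its departure point, never wrapping past the first contact point; your argument does not address returns at all. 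The paper proves this by forming the Jordan curve $f(t_1)\gamma f(t_2) E^{-} f(t_1)$ out of the excursion and the anticlockwise arc of $E$, and noting that $f((t_2,1])$ cannot re-enter its interior. For \textbf{P2}, your second case ends with ``via a planar separation argument in the annulus $D$,'' which is exactly the step that must be supplied. The paper argues by contradiction: assuming $v_1 E v_2\cap \partial_1 D=\varnothing$, it exhibits an explicit Jordan curve $\cC^{\ast}$ (either $p^{1}_{2}\gamma v_1 E q\,\partial_2 D^{-}p^{1}_{2}$ when the arc $v_1Ev_2$ meets $\partial_2 D$ at a first point $q$, or $p^{1}_{2}\gamma v_1 E v_{2}\gamma^{-}p^{2}_{2}\partial_2 D^{-}p^{1}_{2}$ otherwise) that avoids $\partial_1 D$ yet has $p^{1}_{1}$ in its interior and $p^{2}_{1}$ in its exterior; since $\partial_1 D$ is connected and contains both points, this is a contradiction. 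Without some such explicit construction, your \textbf{P2} is an assertion rather than a proof, and since \textbf{P3} rests on \textbf{P2}, the gap propagates.

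A second, smaller issue is the case $l=0$. You handle it directly inside the \textbf{P3} pigeonhole (a segment meets a circle in at most two points), but this presupposes that \textbf{P1} and \textbf{P2} hold for $\gamma_r^0$, which the Jordan-curve arguments do not deliver: $\gamma_r^0$ can self-intersect, so the injective parametrization underlying both properties is unavailable at $l=0$. The paper avoids this by proving \textbf{P3} for $l>0$ and passing to the limit $l\downarrow 0$ using $\gamma_r^l\to\gamma_r^0$; you should do the same rather than argue the degenerate case directly.
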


\begin{proof}
To prove \textbf{P1}, fix an injective parametrization $f:[0,1] \to \RR^2$ of
$\gamma$. Suppose that for $0 \le t_1 < t_2 \le 1$ we have
$f(t_1), f(t_2) \in \partial E$ and $f((t_1, t_2)) \subset \RR^2\backslash E$.
If we consider the Jordan curve $\cC= f(t_1)\gamma f(t_2) E^- f(t_1)$, then
$f((t_2, 1])$ cannot intersect $\cC$ or its interior. This implies that if
$f$ returns to $\partial E$, it must be on a point in $f(t_2) E f(t_1)$ and the
proof of \textbf{P1} is over.

The proof of \textbf{P2} is also built upon Jordan curves. Notice that
almost surely, we have that $E$ is not tangent to $\partial_1 D$. 
Let $p^{i}_{j}$ be the intersection point of $\cP_i$ with
$\partial_{j} D$, for $j=1,2$.
Suppose $\cQ_1 = v_1 E v_{2}$ does not intersect $\partial_1 D$. There are two
cases to consider and in each one we find a Jordan curve $\cC^{\ast}$ such that
\begin{enumerate}[(i)]
\item We have $\smash{p^{1}_{1}}$ inside $\smash{\cC^{\ast}}$ and
    $\smash{p^{2}_{1}}$ outside $\smash{\cC^{\ast}}$;
\item $\smash{\cC^{\ast}}$ does not intersect $\smash{\partial_1 D}$.
\end{enumerate}
Noticing that $p^{j}_1$ are points of $\partial_1 D$ we reach a contradiction,
    since $\partial_1 D$ is path-connected implies it must cross $\cC^{\ast}$
    (see Figure~\ref{fig:P2_construction}).

\medskip
\noindent
\textbf{Case $\cQ_1$ intersects $\partial_2 D$.}
Let $q$ be the first point of $\partial_2 D$ that intersects $\cQ_1$. Then,
$\cC^{\ast} = p^{1}_{2} \gamma v_1 E q \partial_2D^{-} p^{1}_2$ is a Jordan
curve that does not intersect $\partial_1D$.

\medskip
\noindent
\textbf{Case $\cQ_1$ does not intersect $\partial_2 D$.}
Take
$\cC^\ast
    = p^{1}_2 \gamma v_1 E v_{2} \gamma^{-}
      p^{2}_2 \partial_2 D^{-} p^{1}_2$.

\medskip
A similar reasoning implies that $\partial_1 D$ intersects
$v_{2} E v_1$ and we can conclude that \textbf{P2} holds.

Finally, we prove \textbf{P3}. Notice that $v_i$ cannot be inside
$\partial_1D$ since then $p^{i}_2 \gamma v_i$ would intersect
$\partial_1D$. Then, the lemma follows from \textbf{P2} and the
fact that $B(l_1)\backslash E$ can have at most 2 connected components.
The case $l=0$ can be deduced by making $l \downarrow 0$, since
$\gamma_r^l \to \gamma_r^0$.
\end{proof}

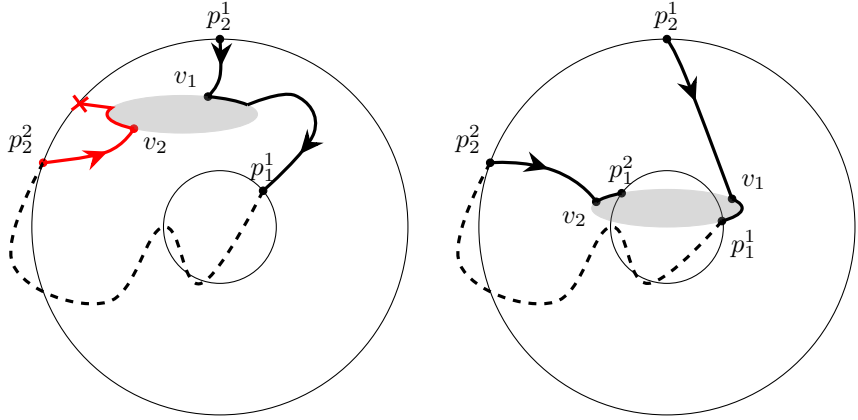
\begin{figure}
\centering
\begin{tikzpicture}[scale=0.25,
    dot/.style={
        draw,circle,minimum size=1mm,inner sep=0pt,outer
        sep=0pt,fill=black},
    filling/.style={gray, opacity=.3}
    ]
    \coordinate          (c) at (-2, 6); 
    \coordinate [dot]  (p12) at (  90:10)
                        node at  (p12) [above] {$p^{1}_2$};
    \coordinate [dot, red]  (p22) at ( 160:10)
                        node at (p22) [above left] {$p^{2}_2$};
    \coordinate [dot]   (v1) at ($(c)+(70:4 and 1)$)
                        node at  (v1) [above left] {$v_1$};
    \coordinate        (v1l) at ($(c)+(30:4 and 1)$); 
    \coordinate [dot, red]   (v2) at ($(c)+(-130:4 and 1)$)
                        node at (v2) [below right] {$v_{2}$};
    \coordinate        (v2l) at ($(c)+(-200:4 and 1)$); 
    \coordinate [dot]  (p11) at (40:3)
                        node at  (p11) [above] {$p^{1}_1$};
    \draw (0,0) circle (10);
    \draw (0,0) circle (3);
    \filldraw[filling] (c) circle (4 and 1);
    \draw[very thick]
        plot [smooth, tension=.7]
        coordinates {(p12) (0,8) (v1)}
        [arrow inside={end={Stealth[width=8pt, length=9pt]}}{.45}];
    \draw[very thick] (v1) arc (70:30:4 and 1);
    \draw[very thick]
        plot [smooth, tension=.7]
        coordinates {(v1l) (4,7) (5, 5)(p11)}
        [arrow inside={end={Stealth[width=8pt, length=9pt]}}{.7}];
    \draw[very thick, dashed]
        plot [smooth, tension=.7]
        coordinates {(p11) (-1,-3) (-3,0) (-6, -4) (-11, -2) (p22)};
    \draw[very thick, red]
        plot [smooth, tension=.7]
        coordinates {(p22) (-6,4) (v2)}
        [arrow inside={end={Stealth[width=8pt, length=9pt]}}{.65}];
    \draw[very thick, red, -{Rays}]
        (v2) arc (-130:-200:4 and 1) -- (140:10.3);
\end{tikzpicture}%
\hspace{5mm}%
\begin{tikzpicture}[scale=0.25,
    dot/.style={
        draw,circle,minimum size=1mm,inner sep=0pt,outer
        sep=0pt,fill=black},
    filling/.style={gray, opacity=.3}
    ]
    \coordinate          (c) at (0,1); 
    \coordinate [dot]  (p12) at (  90:10)
                        node at  (p12) [above] {$p^{1}_2$};
    \coordinate [dot]  (p22) at ( 160:10)
                        node at (p22) [above left] {$p^{2}_2$};
    \coordinate [dot]   (v1) at ($(c)+(30:4 and 1)$)
                        node at  (v1) [above right] {$v_1$};
    \coordinate        (v1l) at ($(c)+(-43:4 and 1)$); 
    \coordinate [dot]   (v2) at ($(c)+(-200:4 and 1)$)
                        node at (v2) [below left] {$v_{2}$};
    \coordinate [dot]  (p11) at (v1l)
                        node at  (p11) [below right] {$p^{1}_1$};
    \coordinate [dot]  (p21) at ($(c)+(-233:4 and 1)$)
                        node at (p21) [above] {$p^{2}_1$};
    \draw (0,0) circle (10);
    \draw (0,0) circle (3);
    \filldraw[filling] (c) circle (4 and 1);
    \draw[very thick]
        plot [smooth, tension=.7]
        coordinates {(p12) (1,8) (v1)}
        [arrow inside={end={Stealth[width=8pt, length=9pt]}}{.4}];
    \draw[very thick] (v1) arc (30:-43:4 and 1);
    \draw[very thick, dashed]
        plot [smooth, tension=.7]
        coordinates {(p11) (-1,-3) (-3,0) (-6, -4) (-11, -2) (p22)};
    \draw[very thick]
        plot [smooth, tension=1]
        coordinates {(p22) (-6,3) (v2)}
        [arrow inside={end={Stealth[width=8pt, length=9pt]}}{.5}];
    \draw[very thick] (v2) arc (-200:-233:4 and 1);
\end{tikzpicture}
\caption{By \textbf{P3}, an ellipse $E$ is
    used at most twice by entering arms.}
\label{fig:P3_construction}
\end{figure}

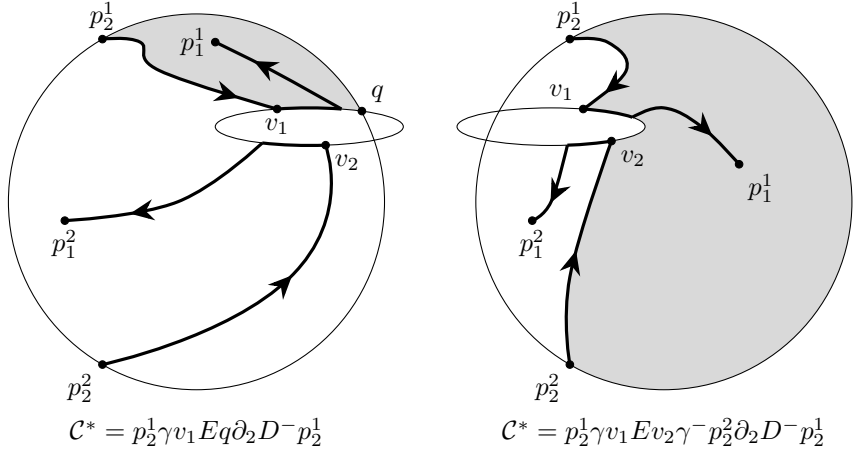
\begin{figure}
\centering
\begin{tikzpicture}[scale=0.25,
    dot/.style={
        draw,circle,minimum size=1mm,inner sep=0pt,outer
        sep=0pt,fill=black}
    ]
    \coordinate          (c) at ( 6, 4); 
    \coordinate [dot]    (q) at ($(c)+(56:5 and 1)$) 
                        node at  (q) [above right] {$q$};
    \coordinate [dot]  (p12) at ( 120:10)
                        node at  (p12) [above] {$p^{1}_2$};
    \coordinate [dot]  (p22) at (-120:10)
                        node at (p22) [below left] {$p^{2}_2$};
    \coordinate [dot]   (v1) at ($(c)+(110:5 and 1)$)
                        node at  (v1) [below] {$v_1$};
    \coordinate        (v1l) at ($(c)+(70:5 and 1)$); 
    \coordinate [dot]   (v2) at ($(c)+(-80:5 and 1)$)
                        node at (v2) [below right] {$v_{2}$};
    \coordinate        (v2l) at ($(c)+(-120:5 and 1)$); 
    \coordinate [dot]  (p11) at (1, 8.5)
                        node at  (p11) [left] {$p^{1}_1$};
    \coordinate [dot]  (p21) at (-7,-1)
                        node at (p21) [below] {$p^{2}_1$};
    \begin{scope}[on background layer]
    \fill[gray, opacity=.3]
        plot [smooth, tension=.7]
        coordinates {(p12) (-3,8.5) (-2,7) (v1)}
        arc (110:56:5 and 1)
        arc (29:120:10) --cycle;
    \node[below] at (0,-11)
        {$\cC^{\ast}=p^{1}_{2} \gamma v_1 E q \partial_2D^{-} p^{1}_2$};
    \end{scope}

    \draw (0,0) circle (10);
    \draw (c) circle (5 and 1);
    \draw[very thick]
        plot [smooth, tension=.7]
        coordinates {(p12) (-3,8.5) (-2,7) (v1)}
        [arrow inside={end={Stealth[width=8pt, length=9pt]}}{.8}];
    \draw[very thick] (v1) arc (110:70:5 and 1);
    \draw[very thick]
        plot [smooth, tension=.7]
        coordinates {(v1l) (4, 7) (p11)}
        [arrow inside={end={Stealth[width=8pt, length=9pt]}}{.7}];
    \draw[very thick]
        plot [smooth, tension=1]
        coordinates {(p22) (5, -4) (v2)}
        [arrow inside={end={Stealth[width=8pt, length=9pt]}}{.6}];
    \draw[very thick] (v2) arc (-80:-120:5 and 1);
    \draw[very thick]
        plot [smooth, tension=.7]
        coordinates {(v2l) (-1,0) (p21)}
        [arrow inside={end={Stealth[width=8pt, length=9pt]}}{.7}];
\end{tikzpicture}%
\hspace{7mm}%
\begin{tikzpicture}[scale=0.25,
    dot/.style={
        draw,circle,minimum size=1mm,inner sep=0pt,outer
        sep=0pt,fill=black}
    ]
    \coordinate          (c) at (-6, 4); 
    \coordinate [dot]  (p12) at ( 120:10)
                        node at  (p12) [above] {$p^{1}_2$};
    \coordinate [dot]  (p22) at (-120:10)
                        node at (p22) [below left] {$p^{2}_2$};
    \coordinate [dot]   (v1) at ($(c)+(70:5 and 1)$)
                        node at  (v1) [above left] {$v_1$};
    \coordinate        (v1l) at ($(c)+(30:5 and 1)$); 
    \coordinate [dot]   (v2) at ($(c)+(-50:5 and 1)$)
                        node at (v2) [below right] {$v_{2}$};
    \coordinate        (v2l) at ($(c)+(-80:5 and 1)$); 
    \coordinate [dot]  (p11) at (4, 2)
                        node at  (p11) [below right] {$p^{1}_1$};
    \coordinate [dot]  (p21) at (-7,-1)
                        node at (p21) [below] {$p^{2}_1$};
    \begin{scope}[on background layer]
    \fill[gray, opacity=.3]
        plot [smooth, tension=.7]
        coordinates {(p12) (-3,8.5) (-2,7) (v1)}
        arc (70:-50:5 and 1)
        plot [smooth, tension=.7]
        coordinates {(v2) (-5, -4) (p22)}
        arc (-120:120:10) --cycle;
    \node[below] at (0,-11)
        {$\cC^{\ast}
        =p^{1}_2 \gamma v_1 E v_{2} \gamma^{-}
         p^{2}_2 \partial_2D^- p^{1}_2$};
    \end{scope}

    \draw (0,0) circle (10);
    \draw (c) circle (5 and 1);
    \draw[very thick]
        plot [smooth, tension=.7]
        coordinates {(p12) (-3,8.5) (-2,7) (v1)}
        [arrow inside={end={Stealth[width=8pt, length=9pt]}}{.8}];
    \draw[very thick] (v1) arc (70:30:5 and 1);
    \draw[very thick]
        plot [smooth, tension=.7]
        coordinates {(v1l) (0, 5) (2, 4) (p11)}
        [arrow inside={end={Stealth[width=8pt, length=9pt]}}{.7}];
    \draw[very thick]
        plot [smooth, tension=.7]
        coordinates {(p22) (-5, -4) (v2)}
        [arrow inside={end={Stealth[width=8pt, length=9pt]}}{.5}];
    \draw[very thick] (v2) arc (-50:-80:5 and 1);
    \draw[very thick]
        plot [smooth, tension=.7]
        coordinates {(v2l) (-6,0) (p21)}
        [arrow inside={end={Stealth[width=8pt, length=9pt]}}{.7}];
\end{tikzpicture}
\caption{Proving \textbf{P2} by contradiction.
    If $v_1 E v_{2} \cap \partial_1 D = \varnothing$,
    then $p^{1}_1$ and $p^{2}_1$ are separated by the Jordan curve
    $\cC^\ast$, whose interior is gray.}
\label{fig:P2_construction}
\end{figure}

\subsection{Percolation on IPS}
\label{sub:percolation_on_IPS}

By Proposition~\ref{prop:LR0_measurable}, we know that $\overline{LR}_0=
\cap_{r>0} \overline{LR}_r$ almost surely. In this section we give proofs of
some results regarding percolation on IPS.
Any IPS $\xi$ of intensity $u$ is a countable collection 
of sticks $(\gamma_j)$. Define an equivalence relation by 
saying that two sticks $\gamma$ and $\gamma'$ are
connected if there is a finite sequence of sticks
$\gamma_0 = \gamma, \gamma_1, \ldots, \gamma_n = \gamma'$
such that $\gamma_j \cap \gamma_{j-1} \neq \varnothing$ for $j \le n$.
We call \textit{clusters} the sets obtained by
the union of every stick in the same equivalence class.
One can verify that a.s. every pair of sticks of the soup that intersect
must do so on their interiors. Indeed, this can be seen as a consequence of the
multivariate Mecke equation for Poisson processes (see e.g.
\cite[Eq. (12.1)]{Last_Penrose_2017}), and the fact that measure
$(\mu_2)^2$ on $S^2 \subset \RR^8$ gives measure zero to the set
\begin{equation*}
\Bigl\{(s_1,s_2) \in S^2;\;
\begin{array}{l}
    \text{either $E_0(s_1)$ contains an endpoint of $E_0(s_2)$,}\\
    \text{or $E_0(s_2)$ contains an endpoint of $E_0(s_1)$}
\end{array}\Bigr\}.
\end{equation*}

\begin{lema}
\label{lema:NW_properties}
Let $\xi$ be a IPS of intensity $u > 0$.
\vspace{-2mm}
\begin{enumerate}[(i)]
\item For each $n \geq 1$ consider box
    $B_1 := [-3^{n+1}, 3^{n+1}] \times [3^{n}, 3^{n+1}]$ and rotate it
    around the origin by angles $\frac{\pi}{2}$, $\pi$ and $\frac{3\pi}{2}$,
    obtaining boxes $B_2, B_3$ and $B_4$.
    Let $A_n$ be the event in which each $B_j$ is crossed by some stick in
    the longest direction, forming a circuit of $4$ sticks around the origin.
    Then, we have $\PP(A_n\ \text{i.o.}) = 1$ and $\cV$ does not percolate,
    a.s.
\item Either all clusters of $\xi$ are bounded a.s. 
or there is a unique unbounded cluster that is also dense.
\end{enumerate}
\end{lema}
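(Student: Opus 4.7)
The plan is: for (i), to combine scale invariance of IPS with an FKG bound on $\PP(A_n)$ and a Borel--Cantelli argument at well-separated scales; for (ii), to apply a Burton--Keane dichotomy together with scale invariance.

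For (i), Proposition~\ref{prop:homogeneity_relation_PSS} guarantees that the IPS law is invariant under the homothety $\cH_{3^{-n}}$, which sends $A_n$ to a copy of $A_0$; hence $\PP(A_n)$ is constant in $n$. To bound $\PP(A_0)$ from below I would apply Proposition~\ref{prop:crossing_box_with_one_stick} with $\alpha=2$ and aspect ratio $k=3$: each of the four boxes is crossed in its long direction by a single stick with probability at least some $p>0$ independent of $l$ (since $l^{2-\alpha}=1$), and FKG yields $\PP(A_0)\geq p^{4}>0$. The four such sticks automatically form a circuit around the origin: each crossing stick is contained in the convex box it crosses, so the horizontal stick across the top box $[-3,3]\times[1,3]$ covers every $x\in[1,3]$ at a $y$-value in $[1,3]$, while the vertical stick across the right box $[1,3]\times[-3,3]$ covers every $y\in[-3,3]$ at an $x$-coordinate in $[1,3]$, forcing the two to meet. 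Adjacent pairs intersect by the same intermediate-value argument, producing the circuit.

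For the i.o. conclusion I would replace $A_n$ by the sub-event $\tilde{A}_n$ in which the four crossings are realized by sticks whose radii lie in a fixed window $[\kappa_1 3^n, \kappa_2 3^n]$; the same FKG plus scale-invariance argument gives $\PP(\tilde{A}_n)\geq \delta>0$ uniformly in $n$. Along a subsequence $n_k$ chosen so that these windows are pairwise disjoint, the events $\{\tilde{A}_{n_k}\}$ depend on disjoint parts of the underlying Poisson process and are therefore independent, so Borel--Cantelli gives $\tilde{A}_{n_k}$ i.o., and a fortiori $A_n$ i.o. Each occurrence of $A_n$ produces a closed curve of covered sticks enclosing the origin, so the vacant connected component of the origin is almost surely bounded; translation invariance together with ergodicity of IPS (Remark~\ref{remark:sipss_ergodic}, available since $\alpha>1$) then extends this to every point, yielding $\PP(\cV\text{ percolates})=0$.

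Part (ii) follows the classical Burton--Keane scheme. Ergodicity makes the number $N$ of unbounded clusters almost surely constant. Poisson insertion tolerance rules out $2\leq N<\infty$ by merging any two unbounded clusters using finitely many added sticks in a bounded region, an event of positive probability. To exclude $N=\infty$ one introduces trifurcation points of the covered graph and bounds their expected number in $B(R)$ both from below by a multiple of the area (via insertion tolerance) and from above by a multiple of the perimeter of $B(R)$ (via an injection of trifurcations into $\partial B(R)$), which is a contradiction for large $R$. Density of the unique unbounded cluster is then immediate from scale invariance: its image under any $\cH_c$ is again an unbounded cluster, hence by uniqueness the same one, so it intersects every ball with probability one. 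The main technical obstacle lies in (i), where the independence between scales must be isolated cleanly inside the Poisson process, in a way compatible with the geometric requirement that four long crossings of adjacent boxes actually interlock into a closed loop; the Burton--Keane step in (ii), though notationally heavier in the continuum, is by now standard for Poisson-driven models.
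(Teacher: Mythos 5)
Your part~(i) follows the paper's strategy (the one-stick crossing bound of Proposition~\ref{prop:crossing_box_with_one_stick} with $l^{2-\alpha}=1$, FKG, Borel--Cantelli), but you enforce independence across scales differently: you restrict to sticks with radii in disjoint windows $[\kappa_1 3^{n_k},\kappa_2 3^{n_k}]$, whereas the paper observes that (up to a null set) no single stick can realize two of the required long-direction crossings, so that \emph{all} the crossing events, across $j$ and across $n$, are already independent. Both work; yours is a bit more robust, the paper's avoids passing to a subsequence. One genuine weak point in your (i): deducing $\PP(\cV\text{ percolates})=0$ from ``the vacant component of the origin is a.s.\ bounded'' plus translation invariance and ergodicity is not sufficient here, because $\cE_0$ is a dense countable union of segments, so $\cV$ is \emph{not} open and an unbounded vacant component of Lebesgue measure zero avoiding every fixed reference point is not excluded by that argument. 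The fix is already in your hands: on $\{A_n \text{ i.o.}\}$ there are covered circuits surrounding $B(3^n)$ for infinitely many $n$, and any vacant component meeting $B(3^n)$ cannot cross such a circuit, so deterministically every vacant component is bounded; argue this directly rather than through ergodicity.

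For part~(ii) you take a genuinely different and much heavier route. The paper gets uniqueness for free from part~(i): on $\{A_n\text{ i.o.}\}$ any two unbounded clusters must each cross a common covered circuit of four sticks at a large enough scale, hence both intersect that circuit's cluster and coincide; no Burton--Keane machinery is needed. Your Burton--Keane scheme is plausible in principle (Poisson insertion tolerance holds), but it carries real technical overhead for this model: every bounded region meets infinitely many sticks, so defining and counting trifurcations, and making the boundary-versus-area comparison rigorous, requires care that your sketch does not supply, and none of it is necessary given~(i). Your density argument (the image of the unbounded cluster under $\cH_c$ is again the unique unbounded cluster) is a correct variant of the paper's scale-invariance argument.
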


\begin{proof}
The proof of \textit{(i)} is a consequence of FKG inequality. Using
Lemma~\ref{prop:crossing_box_with_one_stick} we know that
\begin{equation*}
\PP(A_n)
    \geq \PP(LR_1(2 \cdot 3^n; 3))^{4}
    \geq \bigl(1 - \exp[ -c \cdot u ]\bigr)^{4}
    > 0.
\end{equation*}
Moreover, notice that the collection of all four events that
comprise each $A_n$ is made of independent events, since a single
stick cannot cross two of the boxes at the same time. Thus, by Borel-Cantelli
Lemma we conclude that $\PP[A_n\ \text{i.o.}] = 1$.

To prove \textit{(ii)} we use an argument from \cite{nacu2011random}. By
ergodicity of IPS with respect to translations, we know that the event in
which all clusters are bounded has probability 0 or 1. If it is zero, there
is an unbounded cluster and by \textit{(i)} we conclude it must be unique.
To see that this unbounded cluster is also dense, just notice that the
distance between the unbounded cluster and the origin is a finite random
variable $X$ that is also scale-invariant, implying $X=0$ a.s.
\end{proof}

By~\cite{nacu2011random}, we already know that, for
small densities $u$, the carpet is non empty. For completeness, we sketch a
proof adapting the argument
of~\cite[Theorem~1.3]{teixeira_ungaretti2017ellipses}.
In~\cite{ungaretti2017phdthesis} there is a second proof,
based on a theorem of Popov and Vachkovskaia for
multi-scale Poisson stick model~\cite{popov2002note}.

\begin{proof}[Proof of Proposition~\ref{prop:vacant_crossing_boxes_IPS}]
Fix a box $B \subset \RR^2$.
We prove that for $u>0$ sufficiently small there is $\delta(u)> 0$ such that
\begin{equation}
\label{eq:LR0_as_intersection_bounds}
\delta(u)
    \le \PP\bigl( \cap_{r > 0}\overline{LR}_r (B) \bigr)
    \le 1 - \delta(u).
\end{equation}
Joining the estimates in~\eqref{eq:LR0_as_intersection_bounds} with Proposition~\ref{prop:LR0_measurable}, the conclusion follows.

Since our model is already scale-invariant, we can fix $l = 1$.
A standard FKG argument reduces the problem to studying $k = 2$, like in 
\cite[Lemma~7.1]{teixeira_ungaretti2017ellipses}. Consider
$B := [-1,1] \times [0,1]$.
Proposition~\ref{prop:crossing_box_with_one_stick} provides
\begin{equation*}
\PP\bigl( \cap_{r > 0}\overline{LR}_r (B) \bigr)
    \le 1 - \PP(LR_1(2;1/2))
    \le 1 - \delta(u).
\end{equation*}
For the lower bound, we make a coupling with fractal percolation.
For each $n \in \NN$ we partition our original box $B$ into square
boxes of side $2^{-n}$ and denote this family of boxes by
$(B^n_z)_{z \in \Lambda_n}$. Let also $I_n := (2^{-(n+1)}, 2^{-n}]$.
For each $n$ we define the random field
\begin{equation*}
X^n_z
    := \I\{
        \xi_0(s; R \in I_n, \; E_0(s) \cap B^n_z \neq \varnothing ) = 0
        \}
\end{equation*}
and notice that once again $\PP(X_z^n = 1) \geq e^{-cu}$ for some
universal positive constant. Thus, applying the results of Liggett,
Schonmann and Stacey~\cite{liggett1997domination} we obtain for
each $n$ a product random field
$(Y^n_z)_{z \in \Lambda_n}$ that is dominated by
$(X^n_z)_{z \in \Lambda_n}$ and $\PP(Y^n_z = 1) = \beta(u) \to 1$ as
$u \to 0$. The sets
\begin{equation*}
A_0 := B
\quad \text{and} \quad
A_n := A_{n-1} \cap
    \Big(\bigcup_{\substack{\scriptscriptstyle z \in \Lambda_n;
    \\ \scriptscriptstyle Y^n_z = 1}} B^n_z \Big)
\end{equation*}
represent the $n$-th step of construction of fractal percolation model with
parameter $\beta(u)$. Event
\begin{equation*}
\bigl\{\, \xi_0(s; E_0(s) \cap B \neq \varnothing, R > 1) = 0 \, \bigr\}
    \cap \bigl\{ \text{$A_n$ crosses $B$ from left to right} \bigr\}
\end{equation*}
is contained on event $\overline{LR}_{2^{-n}}(B)$ for every $n$.
Applying Theorem 1 of Chayes, Chayes and
Durrett~\cite{ChayesDurrett1988fractal_percolation}, we know that
if we take $u$ sufficiently small then
$\cap_{n \in \NN} \{\text{$A_n$ crosses $B$ from left to right}\}$
happens with positive probability, implying
$\PP(\cap_{r>0} \overline{LR}_r(B)) \geq \delta(u)$ and finishing
the proof of~\eqref{eq:vacant_crossing_bounded_away_IPS}.

By Lemma~\ref{lema:NW_properties} we know $\cV$ a.s. does not percolate.
Finally, we prove that $\cE$ does not percolate for $u \in (0, \bar{u})$.
Notice that with probability at least $\delta(u) > 0$ we
have a vacant circuit around the origin on $B(3l)\backslash B(l)$, using FKG.
Define event $\tilde{A}_n$ in which there is such a circuit on
$B(3^{n})\backslash B(3^{n-1})$. By Fatou's lemma, we have
$\PP(\limsup_n \tilde{A}_n) \geq \limsup_n \PP(\tilde{A}_n) \geq \delta$ and thus
$\PP(\cE \ \text{percolates}) \le 1 - \delta < 1$. Since
IPS is ergodic with respect to translations (see
Remark~\ref{remark:sipss_ergodic}),
hence $\PP(\cE \ \text{percolates}) = 0$.
\end{proof}

\section{H\"older regularity of exploration paths}
\label{sec:Holder_regularity_exploration_paths}

In this section we develop our proof that exploration paths $\gamma_r^0$
satisfy \textbf{H1}, implying this sequence of curves is tight and
providing useful information on their regularity.
Before that, recall that $\tilde{\gamma}_r^0$ is the subpath of $\gamma_r^0$
starting from its last visit to the left side of box $B$.
\begin{prop}
\label{prop:property_varnothing}
Family $\{\tilde{\gamma}^0_r\}_{0 < r \le 1}$ satisfies Property
($\varnothing$) in~\eqref{eq:property_varnothing}, with
constants $\zeta = 2$ and $q, Q$ depending only on $u$.
\end{prop}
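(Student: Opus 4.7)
My plan is to associate to each ball $B(x_i, r_i)$ a \emph{blocking} event $C_i$ that depends only on sticks of $\xi$ entirely contained in the annulus $A_i := D(x_i; r_i, 2r_i)$. On $C_i$, the covered set $\cE$ contains a closed loop inside $A_i$ surrounding $x_i$, and by the non-crossing property~\eqref{eq:path_from_last_left} of $\tilde{\gamma}^0_r$, this loop prevents the curve from entering $B(x_i, r_i)$ whenever $\tilde{\gamma}^0_r$ starts outside $B(x_i, 2r_i)$.

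Since the collection is $2$-separated, the balls $B(x_i, 2r_i)$ are pairwise disjoint, hence so are the annuli $A_i$; therefore the subsets $\{s \in S; E_0(s) \subset A_i\}$ of the parameter space are pairwise disjoint and the events $C_i$, being functions of the corresponding restrictions of the PPP $\xi$, are mutually independent. By the scale and translation invariance of $\mu_2$ (Proposition~\ref{prop:homogeneity_relation_PSS}), $\PP(C_i)$ equals a constant $\delta = \delta(u)$ depending only on $u$, namely the probability of the canonical version of the event on $D(0;1,2)$. I would verify $\delta > 0$ by exhibiting a bounded number of short sticks tangentially placed along the middle circle of $D(0;1,2)$ whose union forms a closed polygonal loop around the origin inside the annulus; small product neighborhoods of these parameter choices have positive $\mu_2$-measure, so standard Poisson process reasoning gives positive probability that $\xi$ contains a witness stick in each.

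The conclusion is then a counting argument. The starting point of $\tilde{\gamma}^0_r$ lies on the left side of the box $B$, and because the $B(x_j, 2r_j)$ are pairwise disjoint it belongs to at most one of them; call its index $j^\ast$, with $j^\ast = 0$ if the start lies in none. On the event that $\tilde{\gamma}^0_r$ visits every $B(x_i, r_i)$, the blocking property forces $C_i^c$ for all $i \neq j^\ast$, and splitting on the value of $j^\ast$ together with independence of $\{C_i\}$ yields
\begin{equation*}
    \PP\bigl(\tilde{\gamma}_r^0 \cap B(x_i, r_i) \neq \varnothing \ \forall i\bigr)
    \le \sum_{j=0}^{n} \PP\bigl(C_i^c \ \forall i \neq j\bigr)
    \le (n+1)(1-\delta)^{n-1}.
\end{equation*}
Choosing any $q \in (1-\delta, 1)$ absorbs the $(n+1)$ factor into a constant $Q = Q(u)$, giving Property~($\varnothing$) with $\zeta = 2$ and $q, Q$ depending only on $u$.

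The delicate step I expect to be the main obstacle is the topological blocking. Because Definition~\ref{defi:cross_segment_by_curve}'s notion of \emph{not crossing} allows the curve to touch sticks and walk along them, and because consecutive sticks of the circuit meet pairwise in their interiors almost surely, one must carefully rule out that $\tilde{\gamma}^0_r$ slips from outside the loop to its interior at such an intersection point. I plan to handle this with the same perturbation strategy used for property \textbf{P3} of Lemma~\ref{lema:topological_properties}: approximate the sticks by ellipses of minor axis $l > 0$, for which the standard Jordan-curve argument applies verbatim, and then pass to the limit $l \downarrow 0$ using the convergence of $\gamma^l_r$ to $\gamma^0_r$ and Corollary~\ref{coro:curve_crosses_stick_soup}.
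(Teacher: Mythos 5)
Your proposal is essentially the paper's own argument: independent blocking circuits in the $2$-separated annuli $D(x_i;r_i,2r_i)$ (made independent by localizing the witnessing sticks to disjoint regions), the observation that the starting point on the left side of $B$ lies in at most one dilated ball, and a product bound $(1-\delta)^{n-1}$ absorbed into $Qq^n$. The only difference is cosmetic: the paper builds each circuit from four sticks crossing four rectangles inside the annulus (as in Lemma~\ref{lema:NW_properties}(i), with centers required to lie in the rectangles to force independence), whereas you use a chain of sticks wholly contained in the annulus; both localizations work, and your explicit union bound over which ball contains the starting point is, if anything, slightly more careful than the paper's.
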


\begin{proof}
Let $(B(x_i,r_i); 1 \le i \le n)$ be a family of 2-separated balls with
$x_i \in B$ and fix $r \in (0,1]$. Let $Z$ be the only point in
$\tilde{\gamma}_r^0$ that intersects the left side of $B$ and notice that
$Z$ belongs to at most 1 ball from $(B(x_i, 2r_i))$. Consequently, $Z$ is outside 
at least $n-1$ balls of the collection, say $(B(x_i, 2r_i))_{i \le n-1}$.
For each $i \le n-1$, we define events $A_i$ like in
Lemma~\ref{lema:NW_properties}(i): four congruent rectangles $(B^{i}_j)_{j=1}^4$ are
crossed by a single stick in the longest direction, where
\begin{equation*}
    B^{i}_1 = x_i + [r_i, \sqrt{2} r_i] \times [-\sqrt{2}r_i, \sqrt{2}r_i]
\end{equation*}
and $B^{i}_2,B^{i}_3$ and $B^{i}_4$ are rotations around $x_i$ by angles
$\frac{\pi}{2}, \pi$ and $\frac{3\pi}{2}$, so that
$\cup_{j=1}^4 B^{i}_j \subset D(x_i; r_i, 2 r_i)$. By
Lemma~\ref{prop:crossing_box_with_one_stick},
\begin{equation*}
\PP(A_i)
    \geq \PP\bigl(LR_1\bigl( (\sqrt{2}-1) r_i;\; 4 + 2\sqrt{2}\bigr)\bigr)^{4}
    \geq \bigl(1 - \exp[ -c \cdot u ]\bigr)^{4}
    > 0.
\end{equation*}
Moreover, we modify $A_i$ slightly by requiring for each $B^{i}_j$ that its
crossing by a single stick is achieved by a stick centered inside $B^{i}_j$,
obtaining events $\tilde{A}_i$. This modification is useful because events
$\tilde{A}_i$ are independent and also satisfy $\PP(\tilde{A}_i) \ge c > 0$
for some constant $c = c(u)$, see the proof of the lower bound
in~\cite[Proposition 5.1]{teixeira_ungaretti2017ellipses}.
By construction, on event $\tilde{A}_i$ the 4 sticks that
make the crossings form a circuit inside the annulus $D(x_i; r_i, 2 r_i)$ that
prevents $\tilde{\gamma}^0_r$ to enter $B(x_i,r_i)$. Thus,
\begin{equation*}
\PP(\text{$\tilde{\gamma}^0_r$ intersects $B(x_i, r_i)$ for every $1\le i\le n-1$})
    \le \PP( \cap_{i=1}^{n-1} \tilde{A}_i^{\comp})
    \le (1 - c)^{n-1}.
\end{equation*}
We have an upper bound of the form $Q \cdot q^n$ with $Q = (1-c)^1$ and
$q=1-c$.
\end{proof}

Now, we focus on proving \textbf{H1}. The proof is divided into two
steps. First, we prove bounds on covered 1-arm events for IPS. The second step
is to relate the existence of $k$-arms in a path $\gamma_r^0$ with the
existence of a positive proportion (independent of $r$) of disjoint covered
arms using sticks in $\xi_r$; this allows to use BK inequality to conclude the
result.

\subsection{1-Arm events for IPS}
\label{sub:1_arm_events_for_sipss}

In this subsection, we estimate the probability of covered 1-arm events for
IPS. This bound is new, to the best of our knowledge. As we mentioned in
Section~\ref{sub:previous_results}, reference~\cite[Corollary~8]{nacu2011random}
proves a power law for `vacant' 1-arms in a general scale-invariant
soup of random curves. If $A_\epsilon$ is the event in which there
is a path connecting the inner and outer
boundaries of $D(\epsilon,1)$ that do not cross curves in
$\Gamma_{D(\epsilon,1)}$ for IPS, they prove
\begin{equation}
\label{eq:nacu_poly_bound_recall}
\epsilon^{\bar{\eta}} \le \PP(A_\epsilon) \le k'\epsilon^{\bar{\eta}}
\end{equation}
for positive constants $\bar{\eta}(u)$ and $k'(u)$. In order to obtain
\textbf{H1}, we need similar estimates for covered 1-arms.

\begin{defi}
\label{defi:stick_arm_IPS}
For $0 < l_1 < l_2$, define $\Arm_0(l_1, l_2)$ as the event in which there is a
finite sequence of sticks of $\xi_0$ crossing $D(l_1, l_2)$ in IPS and
let $C_\epsilon := \Arm_0(\epsilon, 1)$. Also, define $\Circ_0(l_1, l_2)$
as the event in which there is a finite sequence
$(s_i)_{i=1}^n \in \xi_0$ such that
$E_0(s_i) \cap E_0(s_{i+1}) = p_i \in D(l_1, l_2)$ for all
$i = 1, \ldots, n$ (with $s_{n+1} = s_1$) and the polygon
$p_1, p_2, \ldots, p_n, p_1$ is a simple curve that contains
$B(l_1)$ in its interior.
\end{defi}

\begin{defi}
\label{defi:stick_arm_events}
For $r > 0$ and $0 < l_1 < l_2$ we define the event
$\Arm_r(l_1, l_2) := \{\cE_r \ \text{crosses} \ D(l_1, l_2)\}$.
Its complement, the event in which there is a vacant circuit in
$D(l_1, l_2)$ with $B(l_1)$ in its interior, is denoted by $\vCirc_r(l_1,l_2)$.
\end{defi}

Notice that scale-invariance implies
$\PP\bigl(\Arm_0(l_1, l_2)\bigr) = \PP\bigl(C_{l_1/l_2}\bigr)$.
One can partially adapt the proof of~\eqref{eq:nacu_poly_bound_recall}
for covered 1-arms, obtaining
\begin{lema}
\label{lema:lbound_power_law_C_epsilon}
Fix $u \in (0, \bar{u})$ and $R > 1$. For any IPS of intensity
$u$ there is $k = k(u,R) > 0$ such that for any
$\epsilon, \epsilon' \in (0, 1/R)$
\begin{equation}
\label{eq:lbound_power_law_C_epsilon}
\PP(C_{\epsilon\epsilon'})
    \geq k \PP(C_\epsilon) \PP(C_{\epsilon'/R}).
\end{equation}

As a consequence, the limit
$\beta(u) := \lim_{\epsilon \to 0}
    \frac{\log \PP(C_\epsilon)}{\log \epsilon}$
exists and belongs to $(0, 2]$. Also, it holds that
$\PP(C_\epsilon) \geq k^{-1}R^\beta \epsilon^\beta$.
\end{lema}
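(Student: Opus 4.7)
The inequality \eqref{eq:lbound_power_law_C_epsilon} is a gluing estimate. The idea is to express a covered $1$-arm in $D(\epsilon\epsilon', 1)$ as the concatenation of three pieces: a covered arm across the inner annulus $D(\epsilon\epsilon', \epsilon')$, a covered circuit around $B(\epsilon'/R)$ within the buffer annulus $D(\epsilon'/R, \epsilon')$, and a covered arm across the outer annulus $D(\epsilon'/R, 1)$. Because $\epsilon < 1/R$ forces $\partial B(\epsilon\epsilon')$ to lie strictly inside $B(\epsilon'/R)$, the covered circuit is a Jordan curve that separates the inner boundary of the total annulus from $\partial B(\epsilon')$; the inner arm is therefore compelled to share a stick with the circuit, and the same topological argument applies to the outer arm. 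This yields the set inclusion $\Arm_0(\epsilon\epsilon', \epsilon') \cap \Circ_0(\epsilon'/R, \epsilon') \cap \Arm_0(\epsilon'/R, 1) \subseteq C_{\epsilon\epsilon'}$. All three events are increasing in the Poisson stick configuration, so FKG bounds the probability of the intersection below by the product, and scale-invariance of IPS (Proposition~\ref{prop:homogeneity_relation_PSS}) converts the three factors to $\PP(C_\epsilon)$, $\PP(\Circ_0(1/R, 1))$, and $\PP(C_{\epsilon'/R})$. This is \eqref{eq:lbound_power_law_C_epsilon} with $k(u, R) := \PP(\Circ_0(1/R, 1))$.

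The main geometric input is $k(u, R) > 0$. The plan is to mimic the construction in Lemma~\ref{lema:NW_properties}(i) inside the annulus: place four rectangles in $D(1/R, 1)$ at successive quadrants, arranged so that a single stick crossing each in its long direction automatically interlocks with its neighbors to form a polygonal covered circuit around $B(1/R)$. Each single-stick crossing has probability bounded away from zero by Proposition~\ref{prop:crossing_box_with_one_stick}, and independence of the four events follows from disjointness of the rectangles, since one stick cannot cross two of them simultaneously. Multiplying the four probabilities yields $k(u, R) > 0$.

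For the existence of $\beta(u)$ and the matching polynomial estimate, the plan is to extract a subadditive sequence. Setting $\mu_n := -\log \PP(C_{R^{-n}})$ and taking $\epsilon = R^{-m}$, $\epsilon' = R^{-(n-1)}$ in \eqref{eq:lbound_power_law_C_epsilon} gives $\mu_{m+n-1} \le \mu_m + \mu_n - \log k$. After introducing $\sigma_n := \mu_{n+1} - \log k$ and reindexing, $(\sigma_n)_{n \ge 0}$ becomes genuinely subadditive, and Fekete's lemma delivers $\sigma_n/n \to \inf_{n \ge 1} \sigma_n/n =: \beta^* \in [0, \infty)$, hence $\log \PP(C_{R^{-n}})/\log(R^{-n}) \to \beta(u) := \beta^*/\log R$; monotonicity of $\epsilon \mapsto \PP(C_\epsilon)$ extends the convergence to all $\epsilon \to 0$. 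The stated polynomial bound on $\PP(C_\epsilon)$ is the direct translation of Fekete's inequality $\sigma_n \ge n \beta^*$ back to the probability scale. For the range $\beta \in (0, 2]$, the upper bound $\beta \le 2$ follows from the crude estimate $\PP(C_\epsilon) \ge c \epsilon^2$ obtained by requiring a single stick of radius $\ge 1$ centered in $B(\epsilon)$ (intensity controlled by Proposition~\ref{prop:intersection_computations}), while $\beta > 0$ is supplied by the polynomial upper decay $\PP(C_\epsilon) \le K \epsilon^\eta$ of Proposition~\ref{prop:up_bound_arm_IPS}. The main technical obstacle throughout is ensuring positivity of $k(u,R)$ uniformly in $u \in (0, \bar{u})$; once this is in place, the FKG/scale-invariance/Fekete chain is standard.
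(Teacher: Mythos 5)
Your proposal follows essentially the same route as the paper's: a three-event gluing (two covered arms plus a covered circuit in a buffer annulus of aspect ratio $R$), FKG for increasing events, scale invariance to identify the three factors, and positivity of $k=\PP[\Circ_0(1,R)]$ via the four-rectangle single-stick construction (which the paper merely asserts at this point, but carries out in the same way in Lemma~\ref{lema:NW_properties}(i) and Proposition~\ref{prop:property_varnothing}). Placing the circuit at scales $[\epsilon'/R,\epsilon']$ instead of the paper's $[\epsilon,R\epsilon]$ is a cosmetic difference, and your additive Fekete argument is exactly the paper's supermultiplicativity of $g$ written in logarithmic coordinates. The topological step (both arms are forced to meet the circuit, hence the union of sticks is a connected covered crossing of $D(\epsilon\epsilon',1)$) is correct, and your derivation of $\beta\in(0,2]$ --- in particular using Proposition~\ref{prop:up_bound_arm_IPS} for $\beta>0$ --- is clean and non-circular.

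One substantive caveat, which applies to the paper's own write-up as much as to yours: the last inequality comes out with the opposite orientation. From $\sigma_{m+n}\le\sigma_m+\sigma_n$, Fekete gives $\sigma_n\ge n\beta^*$, i.e.\ $-\log\PP(C_{R^{-(n+1)}})\ge n\beta^*+\log k$, and exponentiating yields the \emph{upper} bound $\PP(C_{R^{-(n+1)}})\le k^{-1}R^{\beta}\bigl(R^{-(n+1)}\bigr)^{\beta}$, not the stated lower bound. This is the usual phenomenon: supermultiplicativity of a probability delivers an upper bound with the limiting exponent (as for connection probabilities in subcritical percolation), while a lower bound with exponent exactly $\beta$ does not follow from Fekete --- the naive iteration $\PP(C_{\epsilon^n})\ge k^{-1}(k\PP(C_{\epsilon}))^n$ only gives a polynomial lower bound with a possibly larger exponent. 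The paper makes the corresponding slip in~\eqref{eq:tilde_beta_defi}, writing the limit as a supremum where the subadditive lemma produces an infimum, and then flips the inequality accordingly. So your ``direct translation back to the probability scale'' should read $\PP(C_\epsilon)\le k^{-1}R^\beta\epsilon^\beta$; if the lower bound as stated is actually needed, it requires a separate argument.
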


\begin{proof}
Notice that for $\epsilon, \epsilon' \in (0, 1/R)$ one has
$\epsilon\epsilon' < \epsilon < R\epsilon < 1$. Moreover,
$k(u) := \PP[\Circ_0(1, R)] > 0$ and by scale-invariance we have
$k(u) = \PP[\Circ_0(\epsilon, R\epsilon)]$ for any $\epsilon > 0$.
Using FKG inequality we can write
\begin{equation}
\label{eq:lbound_power_law_C_epsilon2}
\PP[C_{\epsilon\epsilon'}]
    \geq \PP[ C_\epsilon,
            \Circ_0(\epsilon, R\epsilon),
            \Arm_0(\epsilon\epsilon', R\epsilon)]
    \geq k \PP[C_\epsilon] \PP[C_{\epsilon'/R}],
\end{equation}
since all events above are increasing. Thus, if we define the function
$g(\epsilon) := kP[C_{\epsilon/R}]$, it follows from
equation~\eqref{eq:lbound_power_law_C_epsilon2} that
\begin{equation*}
g(\epsilon\epsilon')
    = kP[C_{\epsilon\epsilon'/R}] \geq k^2 \PP[C_{\epsilon/R}] \PP[C_{\epsilon'/R}]
    = g(\epsilon) g(\epsilon').
\end{equation*}
Supermultiplicativity implies the existence of the limit
\begin{equation}
\label{eq:tilde_beta_defi}
\tilde{\beta}(u)
    := \lim_{\epsilon \to 0} \frac{\log g(\epsilon)}{\log \epsilon}
    = \sup_{\epsilon < 1/R} \frac{\log g(\epsilon)}{\log \epsilon}
    > 0.
\end{equation}

Existence of the limit characterizing $\beta$ follows from noticing
\begin{equation*}
\beta(u)
    = \lim_{\epsilon \to 0} \frac{\log \PP[C_{\epsilon/R}]}{\log (\epsilon/R)}
    = \lim_{\epsilon \to 0}
        \left[
        \frac{\log g(\epsilon)}{\log \epsilon}
        \frac{\log \epsilon}{\log (\epsilon/R)} -
        \frac{\log k}{\log \epsilon}
        \frac{\log \epsilon}{\log (\epsilon/R)}
        \right]
    = \lim_{\epsilon \to 0} \frac{\log g(\epsilon)}{\log \epsilon}
    = \tilde{\beta}(u).
\end{equation*}

To see that $\beta(u) \le 2$, we apply
Proposition~\ref{prop:crossing_box_with_one_stick}.
Notice that for $\epsilon < 1/2$, crossing with one stick a box whose left
side is a diameter of $B(\epsilon)$ and bottom side has length 1, we have
\begin{equation*}
\PP[C_\epsilon]
    \geq \PP[LR_1(2\epsilon; 1/(2\epsilon))]
    \geq 1 - e^{-uc (2\epsilon)^{2}},
\end{equation*}
implying that
\begin{equation*}
\beta(u)
    = \lim_{\epsilon \to 0} \frac{\log \PP[C_\epsilon]}{\log \epsilon}
    \le \lim_{\epsilon \to 0}
        \frac{\log (1 - \exp[-uc \epsilon^2])}{\log \epsilon}
    = 2.
\end{equation*}

Finally, by \eqref{eq:tilde_beta_defi} we get for
$\epsilon \in (0, 1/R)$ that $\log g(\epsilon) \geq \beta \log \epsilon$ and
hence $f(\epsilon) = k^{-1}g(R\epsilon) \geq k^{-1} R^\beta \epsilon^\beta$.
\end{proof}

However, the upper bound in~\eqref{eq:nacu_poly_bound_recall} is not
easily adaptable. In Proposition~\ref{prop:up_bound_arm_IPS} we show
that for $u \in (0,\bar{u})$ the probability of having a covered 1-arm in IPS
decays polynomially. However, the exponents in our upper and lower bounds do
not coincide, as in~\eqref{eq:nacu_poly_bound_recall}.

A consequence of Definition \ref{defi:stick_arm_events} is that
the function $r \mapsto \PP\bigl(\Arm_r(l_1, l_2)\bigr)$ is decreasing, since by our
standard coupling we have $\cE_{r'} \supset \cE_{r}$ whenever $r' \le r$. Also,
notice that $\Arm_0(l_1, l_2) = \cup_{r>0} \Arm_r(l_1,l_2)$. By this, we have
\begin{equation}
\label{eq:up_bound_arm_truncated_limit}
\PP\bigl(\Arm_r   (l_1, l_2)\bigr)
    \le \lim_{r' \downarrow 0} \PP\bigl(\Arm_{r'}(l_1, l_2)\bigr)
    = \PP\bigl(\Arm_0(l_1,l_2)\bigr).
\end{equation}
Recall that it is important for obtaining \textbf{H1} that the bounds
in~\eqref{eq:hypothesis_H1_IPS} are uniform for the family
$\{\gamma_r^0\}_{0 < r \le 1}$.
By~\eqref{eq:up_bound_arm_truncated_limit} a uniform estimate for covered 1-arm
events is a consequence of
\begin{prop}
\label{prop:up_bound_arm_IPS}
Fix $u \in (0,\bar{u})$. There exists $K(u) > 0$ and $\eta(u) \in (0,1)$
such that for any $0 < l_1 < l_2$ we have
\begin{equation*}
\PP\bigl(\Arm_0(l_1, l_2)\bigr)
    \le K \bigl(l_1/l_2 \bigr)^{\eta}.
\end{equation*}
\end{prop}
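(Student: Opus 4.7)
The strategy is to reduce via scale invariance to a dyadic multi-scale problem and then extract from the nested annuli a random set of \emph{independent} scales on which a uniform lower bound for a vacant-circuit event is available.

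By scale invariance (Proposition~\ref{prop:homogeneity_relation_PSS}) it suffices to bound $\PP(\Arm_0(1, l))$, and by monotonicity of $l \mapsto \PP(\Arm_0(1, l))$ I may assume $l = 2^m$ for some positive integer $m$. Decomposing $D(1, 2^m)$ into the dyadic shells $A_j := D(2^{j-1}, 2^j)$, $j = 1, \dots, m$, every occurrence of $\Arm_0(1, 2^m)$ forces a covered arm across each $A_j$, so no $A_j$ may contain a vacant circuit around the origin. Hence
\begin{equation*}
\PP\bigl(\Arm_0(1, 2^m)\bigr) \le \PP\Bigl(\bigcap_{j=1}^m \vCirc_0(2^{j-1},2^j)^c\Bigr).
\end{equation*}
Scale invariance together with Proposition~\ref{prop:vacant_crossing_boxes_IPS} and the standard four-rectangle FKG argument produces a uniform lower bound $\PP(\vCirc_0(2^{j-1},2^j)) \ge \delta = \delta(u) > 0$. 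The obstruction to simply multiplying these into $(1-\delta)^m$ is that long sticks create correlations between different scales.

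To circumvent this I would run an exploration from the outermost shell inwards. Process $j = m, m-1, \dots, 1$; at step $j$ reveal the sticks of $\xi$ intersecting $A_j$ that have not yet been revealed. Call $j$ \emph{useful} when (i) no previously revealed stick obstructs a prospective circuit in $A_j$ and (ii) no stick newly revealed at step $j$ has radius exceeding $2^{j-2}$; otherwise declare $j$ \emph{spoiled}. Thin the useful indices to have pairwise gap at least three and let $J$ be the resulting random set. For $j \in J$, by construction $\vCirc_0(2^{j-1}, 2^j)$ is determined by short local sticks in a region $R_j \subset S$ consisting of centers in an annular shell of width $\approx 2^{j-2}$ around $A_j$ and radii at most $2^{j-2}$. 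The gap condition makes the regions $\{R_j\}_{j \in J}$ pairwise disjoint, so by the independence of restrictions of a Poisson process to disjoint regions the events $\{\vCirc_0(2^{j-1},2^j)\}_{j \in J}$ are conditionally independent given the revealed long sticks, each with probability at least $\delta$.

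Finally, the number of spoiled scales is controlled using Lemma~\ref{lema:crossing_annulus} and Proposition~\ref{prop:sticks_cap_boundary_ball}: the expected number of sticks of radius at least $2^{j-2}$ intersecting $A_j$ is uniformly bounded in $j$, and a careful summation over scales together with Poissonian concentration yields $\PP(|J| < c\, m) \le K_1\, 2^{-c_1 m}$ for constants $c, c_1, K_1 > 0$ depending only on $u$. Combining,
\begin{equation*}
\PP\Bigl(\bigcap_{j=1}^m \vCirc_0(2^{j-1},2^j)^c\Bigr) \le \PP(|J| < c m) + (1-\delta)^{c m} \le K \cdot 2^{-\eta m},
\end{equation*}
which after rescaling becomes $\PP(\Arm_0(l_1, l_2)) \le K (l_1/l_2)^{\eta}$ with $K = K(u) < \infty$ and $\eta = \eta(u) \in (0,1)$. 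The main obstacle is the combinatorial definition of \emph{useful}: it must guarantee both (a) independence of the circuit events through disjointness of their underlying PPP regions and (b) an exponential-in-$m$ tail bound on the number of spoiled scales, despite the fact that a single very long stick can a priori spoil many scales at once. The latter relies crucially on the polynomial decay in $R$ of the $\mu_2$-mass of sticks of radius at least $R$ intersecting a given annulus, as quantified by Lemma~\ref{lema:crossing_annulus}.
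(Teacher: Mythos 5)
Your overall architecture is exactly the one the paper uses: reduce to $\Arm_0(1,2^m)$ by scale invariance, decompose into dyadic shells $A_j$, observe that a covered arm forbids a vacant circuit in every shell, and then manufacture a random, linear-in-$m$ collection of scales on which fresh, effectively independent circuit attempts each succeed with probability $\delta(u)$. However, as written your construction has two genuine gaps. First, the ``useful'' condition and the ensuing independence claim do not hold in the stated form. By Proposition~\ref{prop:sticks_cap_boundary_ball}, for $\alpha=2$ the $\mu_2$-mass of sticks of radius less than any $r>0$ meeting a fixed circle is infinite, so almost surely infinitely many sticks revealed at step $j+1$ straddle $\partial B(2^j)$ and enter $A_j$; read literally, ``no previously revealed stick obstructs a prospective circuit in $A_j$'' either fails a.s.\ or is too vague to give the disjointness of PPP regions you need. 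Moreover, a stick in your region $R_j$ (center near $A_j$, radius up to $2^{j-2}$) can reach every $A_{j'}$ with $j'<j$, so the random set $J$ is \emph{not} measurable with respect to the complement of $\bigcup_{j\in J}R_j$, and ``conditionally independent given the revealed long sticks'' does not follow from Poisson independence on disjoint regions alone. The paper repairs exactly this point by defining $D_j$ as in~\eqref{eq:def_Drj} (jump directly to the largest annulus untouched by \emph{any} stick meeting $A_j$, which automatically skips at least one shell and needs no radius cutoff) and by a sequential conditioning argument (Lemma~\ref{lema:up_bound_cap_random_vacant_circuits}) in which the key step is monotonicity: $\vCirc_0(A_l)^{\comp}$ is increasing, so its probability evaluated on the not-yet-explored sticks is at most its unconditional probability. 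Some such monotonicity or FKG step is indispensable and is absent from your sketch.

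Second, the tail bound $\PP(|J|<cm)\le K_1 2^{-c_1m}$ is asserted but not justified. The spoiling events are positively correlated across scales through long sticks, so ``Poissonian concentration'' does not apply directly; note also that the per-scale probability of being spoiled by your condition (ii) is $1-e^{-uC_0}$ for a constant $C_0$ of order $16\pi+32$, which for fixed $u$ may be close to $1$, so you really need a quantitative argument and not just a first-moment count. The paper's substitute is Lemma~\ref{lema:stochastic_domination_for_L}: the gap sequence $L_j=I_{j-1}-I_j$ is stochastically dominated, coordinate by coordinate in partial sums, by an i.i.d.\ sequence $(Y_j)$ with $\PP(Y\ge n)\le uc\,2^{-n}$ (via Lemma~\ref{lema:crossing_annulus}), after which an exponential Chebyshev bound (Lemma~\ref{lema:exponential_Markov_for_Lj}) gives the linear number of attempts. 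Your proposal would be complete if you replaced the useful/spoiled bookkeeping by this jump-and-dominate mechanism, or supplied an equivalent coupling; as it stands, the two central claims (conditional independence of the circuit attempts, and the exponential tail on the number of attempts) are precisely the nontrivial content of the proposition and are left unproved.
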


\begin{proof}
Using scale-invariance, we can consider only $\Arm_0(1,l)$ for $l > 1$.
Moreover, it is sufficient to prove bounds for the values $l = 2^m$ where
$m \in \NN$, i.e.\ , there are $K(u), \eta(u) > 0$ such that
%
\begin{equation}
\label{eq:up_bound_arm0_l}
\PP\bigl(\Arm_0(1, 2^m)\bigr) \le K 2^{-m\eta}
\quad \text{for every $m\in \NN$.}
\end{equation}
Let us partition $\RR^2\backslash \{0\}$ into disjoint annuli using
$A_j := D(2^{j-1}, 2^j)$, where $j \in \ZZ$. Abusing notation a little,
we denote $\Arm_0(2^{j-1}, 2^j)$ by $\Arm_0(A_j)$ and define
\begin{equation*}
\vCirc_0(A_j)
    := \Arm_0(A_j)^{\comp}
    = \smash{\bigcap_{r>0}} \Arm_r(A_j)^{\comp}
    = \smash{\bigcap_{r>0}} \vCirc_r(A_j).
\end{equation*}
Here, we emphasize that in this whole section we do
not rely on the a.s. equality $\overline{LR}_0(B) = \cap_{r>0} \overline{LR}_r(B)$
from Proposition~\ref{prop:LR0_measurable} (actually,
Proposition~\ref{prop:LR0_measurable} a consequence of the results in this
section). By the estimates in~\eqref{eq:LR0_as_intersection_bounds} and FKG
inequality, if we can choose four boxes contained in $A_j$, similarly to
Lemma~\ref{lema:NW_properties}, so that for $u \in (0, \bar{u})$
there is a constant $\delta(u) > 0$ such that
\begin{equation*}
\PP(\vCirc_0(A_j))
    = \PP\Bigl(\smash{\bigcap_{r>0}} \vCirc_r(A_j)\Bigr)
    \ge \delta(u),
    \quad \text{for every $j \in \ZZ$}.
\end{equation*}

Notice that on the event $\Arm_0(1, 2^m)$ none of the events
$\vCirc_0(A_j)$ for ${1 \le j \le m}$ happened. 
If events $\{\vCirc_0(A_j)\}_{j\in \ZZ}$ were independent,
the proof would be already over. However, we know by
Lemma~\ref{lema:decay_of_correlations_HPS} that these events have a
correlation that decays slowly with distance. This decay is so slow that
even the techniques employed in Lemma 4.7 of~\cite{ahlberg2018sharpness}
are not enough. To illustrate, we pick some fixed subset of indices
$N \subset [m]$ and use the bound $\PP\bigl(\Arm_0(1, 2^m)\bigr)
    \le \PP\bigl(\vCirc_0(A_j)^{\comp}, \ \text{for} \ j \in N\bigr)$
trying to take advantage of the decay of correlations. The best bound we
can obtain is $K 2^{- \eta \sqrt{m}}$ which is weaker than the one in
Proposition~\ref{prop:up_bound_arm_IPS}. To get stronger bounds, we
consider random sets of indices.

For any set $B \subset \RR^2$ we can consider the restriction of
$\xi_0$ in which we only look for sticks intersecting the set $B$.
In this case, we denote the covered set observed in this restriction by
$\cE_0(B)$. Define the random variables
\begin{equation}
\label{eq:def_Drj}
D_j
    := \max \{n \in \ZZ;\;
        \text{$n < j$ and $\cE_0(A_j) \cap A_n = \varnothing$}\}.
\end{equation}

In words, $D_j$ searches the index of the largest annulus smaller than
$A_j$ that was not intersected by sticks touching $A_j$. Since
$\PP\bigl(0 \in \cE_0\bigr) = 0$, we have $D_j$ are always finite.
Scale-invariance implies that $j-D_j$ are identically distributed,
but clearly they are not independent. As a first computation, notice that
\begin{equation*}
\PP\bigl(D_m \le 0\bigr)
    \le \PP\bigl(\exists s \in \xi_0;
        E_0(s) \cap B(1) \neq \varnothing, R \geq (2^m - 1)/2\bigr)
    \le u c 2^{-m}
\end{equation*}
for some constant $c$ independent of $u$.
Notice that this event has the decay we want.
On the other hand, on the event $\{D_m \geq 1\}$ we have
\begin{equation*}
\PP\bigl(\Arm_0(1, 2^m), D_m \geq 1\bigr)
    \le \PP\bigl(\vCirc_0(A_{D_m})^{\comp}, D_m \geq 1\bigr)
    \le 1 - \delta.
\end{equation*}
The idea is then to iterate this reasoning, discovering step by step how many
annuli we should discard in order to try again finding a new vacant circuit.
From now on we fix $m \in \NN$ and consider the sequence of random variables
$(I_{j})_{j \geq 0}$ with $I_{0} = m$ and $I_{j} = D_{I_{j-1}}$.
Sequence $I_{j}$ is clearly decreasing and is illustrated on
Figure~\ref{fig:successive_invasions}. It holds
\begin{lema}
\label{lema:up_bound_cap_random_vacant_circuits}
For every $t \in \NN$ one has
$\PP\bigl(\cap_{j=1}^{t} \vCirc_0(A_{I_{j}})^{\comp}\bigr) \le (1 - \delta)^t$.
\end{lema}

\begin{proof}
Let $\cF_0$ be the smallest $\sigma$-algebra that makes $\xi_0(A_{I_{0}})$
measurable. Inductively, for every $t \in \NN$ we can define $\cF_t$, the
smallest $\sigma$-algebra that makes $\xi_0(A_{I_{j}})$ for
$j = 1, \ldots, t$ all measurable. Notice that $I_{t}$ is
$\cF_{t-1}$ measurable. Also, on $\cF_{t-1}$ when $I_{t} = l$ and
$I_{t-1} = n$ the only sticks intersecting $A_{l}$ that were not explored
yet are the ones in
\begin{equation}
\label{eq:defi_Gamma_ln}
\Gamma(l,n)
    := \{s; E_0(s) \cap A_{l} \neq \varnothing,
        E_0(s) \cap A_{n} = \varnothing\},
\end{equation}
since the sticks that intersect $A_{n}$ have not intersected $A_l$. Denoting by
$\xi_{|\Gamma(l,n)}$ the restriction of $\xi$ to $\Gamma(l,n)$, this means that
on event $\{I_{t} = l, I_{t-1} = n\}$ we have
\begin{align*}
\PP\bigl(\vCirc_0(A_{I_{t}})^{\comp} \, \mid \cF_{t-1}\bigr)
    &= \PP\bigl(\vCirc_0(A_{l})^{\comp} \
        \text{on $\xi_{|\Gamma(l,n)}$} \mid \cF_{t-1}\bigr)
    = \PP\bigl(\vCirc_0(A_{l})^{\comp} \
        \text{on $\xi_{|\Gamma(l,n)}$}\bigr) \\
    &\le \PP\bigl(\vCirc_0(A_{l})^{\comp} \bigr)
    \le 1 - \delta.
\end{align*}
On the second equality we used independence of PPP $\xi$ on disjoint
regions and on the second line we use that event
$\vCirc_0(A_{I_{t}})^{\comp}$ is increasing.
Since the bound above holds for every possible value of $l$ and $n$
we have for any $t$ that
\begin{align*}
\PP\Bigl(\bigcap_{j=1}^{t} \vCirc_0(A_{I_{j}})^{\comp}\Bigr)
    &= \EE\Bigl[
        \I\bigl\{\bigcap_{j=1}^{t-1} \vCirc_0(A_{I_{j}})^{\comp}\bigr\} \cdot
    \PP\bigl(\vCirc_0(A_{I_{t}})^{\comp} \, | \cF_{t-1}\bigr)
    \Bigr]
    \le (1 - \delta) \cdot
        \PP\Bigl(\bigcap_{j=1}^{t-1} \vCirc_0(A_{I_{j}})^{\comp}\Bigr).
\end{align*}
Iterating $t$ times, we are done.
\end{proof}

Now, we consider the possibilities for how many tries we have before
$I_{t} < 1$. Define
\begin{equation*}
L_{j}
    := I_{j-1} - I_{j} \ \text{for every} \ j \geq 1,
\end{equation*}
the random variable that counts the amount of rings discarded moving from step
$j-1$ to step $j$. Notice that knowing the family $(L_{j})$ is equivalent to
knowing $(I_{j})$. The coordinates of these processes are not independent.
Let us denote by $Y$ the distribution of $L_{1}$, i.e.,
\begin{equation*}
Y
    \distr j -
    \max \{n; n < j \ \text{and} \ \cE_0(A_j) \cap A_n = \varnothing\}.
\end{equation*}
Just as we saw in the proof of
Lemma~\ref{lema:up_bound_cap_random_vacant_circuits}, given the values of
$L_{1}, \ldots, L_{j}$ the random variable $L_{j+1}$ should be
stochastically smaller than $Y$, since we have the information that sticks from
$A_{I_{j}}$ did not intersect $A_{I_{j+1}}$.
This is made precise the following result.
\begin{lema}
\label{lema:stochastic_domination_for_L}
Let $(Y_{j})_{j \geq 1}$ be a family of iid. random variables distributed as
$Y$. For any $t \in \NN$ we have that
\begin{equation}
\label{eq:stochastic_domination_for_L}
\smash{\sum_{j=1}^{t} Y_{j}}
\quad \text{dominates stochastically} \quad
\smash{\sum_{j=1}^{t} L_{j}}.
\end{equation}  
\end{lema}
\begin{proof}
\vspace{2mm}
By adding some extra randomness, we define an explicit sequence of random
variables $Y_{j}$ that verifies~\eqref{eq:stochastic_domination_for_L}.
Just like we defined $L_{j} = I_{j-1} - I_{j}$ it is convenient to define
our sequence $Y_{j}$ as the difference of $\tI_{j-1} - \tI_{j}$ for a
decreasing sequence of random variables that we define below. Proving the
stochastic domination is then the same as showing that almost surely
$\tI_{t} \le I_{t}$ for every $t$.

Define $\tI_{j} := I_{j}$ for $j = 0, 1$ (which implies
$Y_{1} = L_{1}$). We denote by $\xi_{|l}^{n}$ the restriction of
PPP $\xi$ to sticks that intersect $A_l$ but not $A_n$ and by $\xi_{|l,n}$
the restriction to sticks that intersect both $A_l$ and $A_n$.
Consider a family $(\tilde{\xi}_{|l,n};\; l, n \in \ZZ)$ of
independent realizations of $\xi_{|l,n}$.

By definition of $I_{j}$, we have that
\begin{align*}
I_{j+1}
    &:= \max\{l;\; \cE_0(A_{I_{j}}) \cap A_l = \varnothing \}
    = \max\{l;\; \cE_0(A_{I_{j}}) \cap A_l = \varnothing \ 
    \text{on $\smash{\xi_{|I_{j}}^{I_{j-1}}}$}\},
\end{align*}
since no stick from $A_{I_{j-1}}$ can intersect $A_{I_{j}}$. We define
$\tI_{j+1}$ by setting
\begin{equation*}
\tI_{j+1}
    := \max\{l;\; \cE_0(A_{\tI_{j}}) \cap A_l = \varnothing \ 
    \text{on $\xi_{|\tI_{j}}^{\tI_{j-1}} \cup
    \tilde{\xi}_{|\tI_{j}, \tI_{j-1}}$}\}.
\end{equation*}
By adding this independent realization, we make
$Y_{j} = \tI_{j+1} - \tI_{j}$ with the same distribution as
$Y$, and $Y_{j}$ is by construction an independent sequence. Moreover, we
claim that $\tI_{j} \le I_{j}$ for every $j$. Indeed, for $j=0,1$ we
actually have an equality and if we assume that $\tI_{j} \le I_{j}$ then it
is not possible for $I_{j+1} < \tI_{j+1}$ since this would imply there is
some stick from $\xi_{|I_{j}}^{I_{j-1}}$ that intersects both $A_{I_{j}}$
and $A_{I_{j+1}+1}$. This stick must also intersect $A_{\tI_{j}}$,
implying that $\tI_{j+1} \le I_{j+1}$, a contradiction
(see Figure~\ref{fig:successive_invasions}).
\end{proof}

\begin{figure}[ht]
\centering
\begin{tikzpicture}[scale = .7,
  used/.style={fill=gray!20},
  tIused/.style={fill=gray!60},
  discarded/.style={fill=white}
  ]

\node[right] at (11.4, 3.5) {$A_{I_{0}}$};
\node[right] at ( 6.1, 3.5) {$A_{I_{1}}$};
\node[right] at ( 3.8, 3.5) {$A_{I_{2}}$};
\node[right] at (11.4,-3.5) {$A_{\tI_{0}}$};
\node[right] at ( 6.1,-3.5) {$A_{\tI_{1}}$};
\node[right] at ( 2.4,-3.5) {$A_{\tI_{2}}$};

\begin{scope}
\clip (2,-3) rectangle (13,3);
\foreach \x/\y in {
    13/used, 12/discarded, 11/discarded, 10/discarded, 9/discarded,
    8/used, 7/discarded, 6/used,5/tIused, 4/discarded, 3/used,
    2/discarded, 1/discarded}{
  \draw[\y] (0,0) circle (\x);
}

\draw[rotate around={ 30:(7.3, .4)}] (7.3, .4) circle (1 and 0);
\filldraw (7.3, .4) circle (.05);
\draw[rotate around={ 45:(3, 2.5)}] (3, 2.5) circle (.5 and 0);
\filldraw (3, 2.5) circle (.05);
\draw[rotate around={ 105:(2.3, -2.3)}] (2.3, -2.3) circle (.3 and 0);
\filldraw (2.3, -2.3) circle (.05);
\draw[rotate around={-15:(13.5,-2)}] (13.5,-2) circle (5.5 and 0);
\draw[rotate around={ 95:(8.3, .7)}] (8.3, .7) circle (2 and 0);
\filldraw (8.3, .7) circle (.05);
\draw[rotate around={-10:(9.3, -2)}, very thick]
    (9.3, -2) circle (4 and 0);
\filldraw (9.3, -2) circle (.05);
\draw[rotate around={15:(11.3, 1)}, very thick]
    (11.3, 1) circle (4 and 0);
\filldraw (11.3, 1) circle (.05);
\end{scope}

\end{tikzpicture}
    \caption{Annuli from sequence $A_{I_j}$ (light gray) represent
    successive new tries for vacant circuits. Sticks from $A_{I_{j-1}}$ do not
    intersect $A_{I_j}$, which prevents $(L_j)$ from being iid. On
    Lemma~\ref{lema:stochastic_domination_for_L} we add independent sticks
    (thick lines) to build iid. sequence $(Y_j)$. The picture emphasizes
    $A_{\tI_2}$ (dark gray) in a realization in which $\tI_2 < I_2$.}
\label{fig:successive_invasions}
\end{figure}
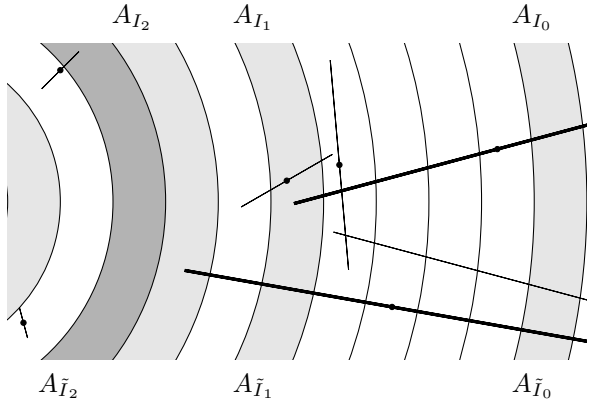
Moving on with the proof of Proposition \ref{prop:up_bound_arm_IPS}, we split
the event $\Arm_0(1, 2^m)$ into two events according to whether we were able to
find a sufficient amount of tries for vacant circuits or not. Let
\begin{equation}
\label{eq:number_of_invasions}
T
    := \max \{ j; I_{j} \geq 1\}
    = \max \{ j; L_{1} + \ldots + L_{j} \le m-1 \}.
\end{equation}

In the next lemma, we prove that with high probability we have a linear number
of attempts:

\begin{lema}
\label{lema:exponential_Markov_for_Lj}
For any $u \in (0,\bar{u})$ there exists positive constants
$\kappa = \kappa(u)$ and $c = c(u)$ such that
\begin{equation*}
\PP\Bigl( \sum_{j=1}^{\lfloor \kappa m \rfloor} L_{j} \geq m\Bigr)
    \le e^{- c m}.
\end{equation*}
\end{lema}

\begin{proof}
Using Lemma~\ref{lema:stochastic_domination_for_L}, we can write for any
$t \in \NN$ that
$
\PP\Bigl( \smash{\sum_{j=1}^{t}} L_{j} \geq m\Bigr)
    \le \PP\Bigl( \smash{\sum_{j=1}^{t}} Y_{j} \geq m\Bigr).
$
Fix some $\theta \in (0, \log 2)$. We have for $n \geq 3$ that
\begin{align*}
\PP(Y \geq n)
  &= \PP(\exists s \in \supp \xi_0;
    E_0(s) \cap A_1 \neq \varnothing, \,
    E_0(s) \cap A_{2-n} \neq \varnothing) \\
  &= \PP(\exists s \in \supp \xi_0;
    E_0(s) \cap \partial B(1) \neq \varnothing, \,
    E_0(s) \cap \partial B(2^{2-n}) \neq \varnothing) \\
  &= 1 -
    \exp[ -u
      \mu_2(s; E_0(s) \cap \partial B(1) \neq \varnothing, \,
          E_0(s) \cap \partial B(2^{2-n} ) \neq \varnothing)
    ] \\
  &\le 1 - \exp[-u c 2^{-n}] \le u c 2^{-n}.
\end{align*}
where the inequality follows from
equation~\eqref{eq:crossing_annulus_simpler} in
Lemma~\ref{lema:crossing_annulus}. This implies
\begin{align}
\EE[e^{\theta Y}]
  &= \sum_{n \geq 2} e^{\theta n} \PP(Y = n)
    \le e^{2\theta} + \sum_{n\geq 3} e^{\theta n} \PP(Y \geq n)
    \le e^{2 \theta} + \sum_{n \geq 3} u c e^{-(\log 2 - \theta)n} \nonumber\\
  &\le e^{2\theta} + u c_\theta < \infty,
\label{eq:exp_moment_for_Y}
\end{align}
for some constant $c_\theta > 0$. The exponential moment
in~\eqref{eq:exp_moment_for_Y} and Markov inequality imply
\begin{equation*}
\PP\Bigl( \smash{\sum_{j=1}^{t}} Y_{j} \geq m\Bigr)
    \le e^{ -\theta m } \cdot \EE\bigl[e^{\theta \sum_{j=1}^t Y_{j}}\bigr]
    = \exp\bigl[\; - \theta m + t \log \EE[e^{\theta Y}] \; \bigr].
\end{equation*}

Take $t = \lfloor\kappa m \rfloor$ where
$0 < \kappa < \frac{\theta}{\log (e^{2\theta} + c_\theta u)}$. We obtain
\begin{equation*}
\PP\Bigl( \smash{\sum_{j=1}^{t}} Y_{j} \geq m\Bigr)
    \le \exp\bigl[- m(\theta - \kappa \log (e^{2\theta} + c_\theta u))\bigr].
    \qedhere
\end{equation*}
\end{proof}

Split event $\Arm_0(1, 2^{m})$ according to whether
$T < \lfloor \kappa m \rfloor$ or not. Notice that
$\{T < \lfloor\kappa m \rfloor\}
    = \big\{ L_{1} + \ldots + L_{\lfloor \kappa m \rfloor} \geq m \big\}$.
Hence, we have by Lemma~\ref{lema:exponential_Markov_for_Lj}
\begin{align*}
\PP\bigl(\Arm_0(1, 2^m)\bigr)
  &\le \PP\bigl(T < \lfloor \kappa m \rfloor\bigr) +
        \PP\bigl(\Arm_0(1, 2^m ), T \geq \lfloor \kappa m \rfloor\bigr) \\
  &\le \exp[- c m] +
  \PP\bigl(
    \cap_{j=1}^{\lfloor \kappa m \rfloor} \vCirc_0(A_{I_{j}})^{\comp} \,
  \bigr) \\
  &\le \exp[-cm] + (1 - \delta)^{\kappa m},
\end{align*}
by Lemma~\ref{lema:up_bound_cap_random_vacant_circuits}. This means that
in~\eqref{eq:up_bound_arm0_l} we can take $K := 2$ and
\begin{equation*}
\eta
    := \frac{c(u) \wedge (\kappa \log(1 - \delta)^{-1})}{\log 2}
    \le \frac{c(u)}{\log 2}
    < \frac{\theta}{\log 2}
    < 1. \qedhere
\end{equation*}
\end{proof}

\subsection{BK inequality}
\label{sub:bk_inequality}

In this section we extend the covered 1-arm bound we obtained for IPS in
Proposition~\ref{prop:up_bound_arm_IPS} to a bound involving more than
one arm. The canonical way to make this passage is to apply the so-called
BK inequality~\cite{van1985inequalities}, which gives an upper bound on
the \textit{disjoint occurrence} of events, as we describe below.

Since we are dealing with a continuum percolation model,
we rely on an extension of BK inequality for marked Poisson point
processes~\cite{van1996note}. So, for this part we see IPS as
a marked PPP on $\RR^{2}$ that gives the centers of sticks, with marks
giving direction and major axis length. A realization $\xi$ of the process can
be seen as a collection of pairs
\begin{equation*}
\bigl\{
    s_i = (z_i, m_i);\;
    z_i \in \RR^{2}, m_i \in \RR_{+} \times (-\pi/2, \pi/2], i \in \NN
    \bigr\}.
\end{equation*}
For any region $U \subset \RR^{2}$ we define 
\begin{equation*}
\xi_{\Vert U} := \{s_i \in \xi; \ z_i \in U\}
\quad \text{and} \quad
\xi_{U} := \{s_i \in \xi; \ E(s_i) \cap U \neq \varnothing\},
\end{equation*}
the restriction of $\xi$ to sticks centered in $U$ and intersecting $U$,
respectively. We also define
\begin{equation*}
[\xi_{\Vert U}] := \{\xi'; \ \xi'_{\Vert U} = \xi_{\Vert U}\}
\quad \text{and} \quad
[\xi_{U}] := \{\xi'; \ \xi'_{U} = \xi_{U}\},
\end{equation*}
the collections of all realizations that coincide with $\xi_{\Vert U}$ and
$\xi_{U}$ in $U$. Reference~\cite{van1996note} contains the following definitions.
\begin{defi}
\label{defi:centered_events}
We say that an event $A$ is \textit{centered} on $U \subset \RR^{2}$ if
when $\xi \in A$ we have also $[\xi_{\Vert U}] \subset A$, i.e.,
we have $\I_{A}(\xi) = \I_{A}(\xi_{\Vert U})$.
We say \textit{$A$ depends only on $U$} if $\xi \in A$ implies
$[\xi_{U}] \subset A$.
\end{defi}
\begin{defi}
\label{defi:disjoint_occurrence}
We say that events $A$ and $B$ \textit{occur disjointly} if
\begin{equation}
A \circ B := \left\{ \omega;\
 \begin{array}{l}
   \text{there are disjoint regions $U$ and $W$ of $\RR^{2}$} \\
   \text{such that $[\omega_{\Vert U}] \subset A$ and $[\omega_{\Vert W}] \subset B$}
 \end{array}
\right\}.
\end{equation}
\end{defi}
\begin{remark}
In \cite{van1996note}, instead of `$A$ being centered on $U$' the authors use
that `$A$ lives on $U$'. However, this name can be misleading since
sticks centered outside region $U \subset \RR^2$ may still intersect $U$.
Moreover, for ensuring measurability, we restrict regions $U$ and $W$
above to the (countable) family of finite unions of rectangles with rational coordinates. 
\end{remark}

Recall that an event $A$ is \textit{increasing} if $\xi \in A$ and
$\xi \subset \xi'$ then $\xi' \in A$. The following is a slight improvement of 
the theorem from \cite{van1996note}.
\begin{lema}
\label{lema:bk_ineq}
Let $V \subset \RR^2$ be a bounded region and consider two increasing events
$A$ and $B$ that depend only on sticks intersecting $V$. Then,
\begin{equation}
\label{eq:BK_inequality_mPPP}
\PP(A \circ B) \le \PP(A) \PP(B).
\end{equation}
\end{lema}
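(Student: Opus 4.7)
The plan is to reduce Lemma~\ref{lema:bk_ineq} to the BK-type inequality from~\cite{van1996note}, which is stated for increasing events centered on a bounded region. The obstruction is that our hypotheses allow sticks of arbitrarily large radius centered far from $V$ to affect the events $A$ and $B$, while the classical statement requires the events to be determined by points of the PPP inside a bounded region.

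First I would set up a truncation. For each $R > 0$, let $\xi^{(R)} := \xi \I\{R_i \le R\}$ denote the soup restricted to sticks of radius at most $R$, and define the truncated events
\begin{equation*}
A^{(R)} := \{\xi;\ \xi^{(R)} \in A\}, \qquad B^{(R)} := \{\xi;\ \xi^{(R)} \in B\}.
\end{equation*}
Since any stick of radius at most $R$ intersecting $V$ is centered in $V^{+R}$, a bounded set, the events $A^{(R)}$ and $B^{(R)}$ depend only on sticks centered in $V^{+R}$ and are therefore centered on $V^{+R}$ in the sense of Definition~\ref{defi:centered_events}. They are also increasing in $\xi$ (because $A, B$ are, and $\xi \mapsto \xi^{(R)}$ preserves inclusions), are increasing in $R$, and satisfy $A^{(R)} \subseteq A$.

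Next I would invoke the BK inequality of~\cite{van1996note} on the bounded region $V^{+R}$ to obtain, for every $R > 0$,
\begin{equation*}
\PP(A^{(R)} \circ B^{(R)}) \le \PP(A^{(R)}) \PP(B^{(R)}) \le \PP(A)\PP(B).
\end{equation*}
To conclude, it then suffices to show $\PP(A \circ B) \le \liminf_{R \to \infty} \PP(A^{(R)} \circ B^{(R)})$. For this I would argue that a.s. on $A \circ B$ there exist bounded witness regions $U, W$ of rational-rectangle type with $[\xi_{\Vert U}] \subset A$ and $[\xi_{\Vert W}] \subset B$, and for $R$ large enough one can upgrade these to witnesses of the truncated events: since $A$ is increasing and depends only on sticks touching the bounded set $V$, configurations in $[\xi_{\Vert U}]$ differ from $\xi$ only in sticks centered outside $U$, so sticks of radius greater than $R$ whose removal could break the witness are centered in $V^{+R} \cap U$, and as $R \to \infty$ their effect disappears because a.s.\ only finitely many sticks of radius $\ge 1$ intersect $V$.

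The main obstacle is precisely this last step, namely verifying that $A \circ B \subseteq \bigcup_R A^{(R)} \circ B^{(R)}$ up to a $\PP$-null set. The difficulty is that one must simultaneously (i) replace the possibly unbounded disjoint witness regions $U, W$ by bounded ones, exploiting that $A, B$ are determined by sticks intersecting the bounded set $V$, and (ii) argue that no essential dependence on arbitrarily large sticks is lost in the truncation, using the increasing nature of $A, B$ together with the a.s.\ local finiteness of sticks of radius at least $r$ intersecting $V$ (cf.\ Proposition~\ref{prop:intersection_computations} with $\alpha = 2$). Once this monotone-convergence type statement is established, monotonicity of probability and the uniform bound above yield $\PP(A \circ B) \le \PP(A)\PP(B)$.
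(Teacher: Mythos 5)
Your proposal is correct, and it follows the paper's overall strategy — truncate so that the events become centered on a bounded region, apply the BK inequality of~\cite{van1996note} there, then remove the truncation — but the truncation and the limiting argument are genuinely different. You cut by stick radius, setting $\xi^{(R)} = \xi\,\I\{R_i \le R\}$ so that $A^{(R)}$ is centered on the bounded set $V^{+R}$, and you remove the cutoff by an almost sure exhaustion: since only finitely many sticks of radius at least $1$ meet $V$ (Proposition~\ref{prop:intersection_computations} with $\alpha=2$), for $R$ beyond a random threshold the truncation deletes no stick relevant to $A$ or $B$, the same witness regions $U,W$ certify $A^{(R)}\circ B^{(R)}$, and monotone convergence gives $\PP(A\circ B)\le \lim_R \PP(A^{(R)}\circ B^{(R)}) \le \PP(A)\PP(B)$. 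The paper instead cuts by the location of the centers, defining $A_n=\{\xi;\ \xi_{V\Vert B(n)}\in A\}$, which yields the deterministic inclusion $A\circ B\subset (A_n\circ B_n)\cup\{\exists s_i;\ E(s_i)\cap V\neq\varnothing,\ z_i\notin B(n)\}$ and kills the error term quantitatively via Lemma~\ref{lema:crossing_annulus}; your route trades that explicit estimate for the a.s.\ local finiteness of large sticks, and both work. The witness-upgrading step you flag as the main obstacle does go through as you indicate: since $A$ is increasing it suffices to check $(\omega_{\Vert U})^{(R)}\in A$, and $(\omega_{\Vert U})^{(R)}$ and $\omega_{\Vert U}$ have the same sticks touching $V$ once $R$ exceeds the largest radius of a stick of $\omega$ meeting $V$. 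The only blemish is the claim that the problematic sticks are centered in $V^{+R}\cap U$ — a stick of radius greater than $R$ touching $V$ need not be centered in $V^{+R}$ — but this does not affect the argument, since all you actually need is that no stick of $\omega$ touching $V$ has radius above $R$.
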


\begin{proof}
The original theorem from~\cite{van1996note} is for events centered on $V$.
Since $V$ is bounded we have $V \subset B(n)$ for any large euclidean ball,
say $n \geq n_0$. Let us define
\begin{equation*}
\xi_{V \Vert B(n)}
    := \{s_i = (z_i, m_i) \in \xi;
        \ E(s_i) \cap V \neq \varnothing,\ z_i \in B(n)\}.
\end{equation*}
and for $n \geq n_0$ let $A_n := \{\xi;\ \xi_{V\Vert B(n)} \in A\}$.
Each $A_n$ is an increasing event that is centered on $B(n)$.
Moreover, events $A_n$ are nested and satisfy $A = \cup_n A_n$.
Define events $B_n$ in an analogous way. Notice that
\begin{equation}
\label{eq:A_circ_B_subset_An_circ_Bn}
A \circ B
    \subset (A_n \circ B_n) \cup
        \{\exists s_i \in \xi;\ E(s_i) \cap V \neq \varnothing,
        \ z_i \notin B(n) \}.
\end{equation}
By union bound and the BK inequality from~\cite{van1996note} we can write
\begin{equation*}
\PP(A \circ B)
    \le \PP(A_n)\PP(B_n) +
        \PP(s_i \in \xi;\ E(s_i) \cap V \neq \varnothing,
            \ z_i \notin B(n))
\end{equation*}
and it suffices to show the second term tends to zero as $n \to \infty$.
By~\eqref{eq:crossing_annulus_simpler} on Lemma~\ref{lema:crossing_annulus}, we have
\begin{equation*}
\PP(s_i \in \xi;\ E(s_i) \cap V \neq \varnothing, \ z_i \notin B(n))
    \le u c \Bigl( \frac{n}{n_0} - 1 \Bigr)^{- 1}
    \xrightarrow{n \to \infty} 0.\qedhere
\end{equation*}
\end{proof}

Notice that $\Arm_r(l_1, l_2)$ is an event that depends only on
sticks touching $B(l_2)$. Moreover, if events $A_i$ are increasing
then $\bigcirc_{i=1}^k A_i$ is also increasing. Iterating
Lemma~\ref{lema:bk_ineq} we obtain
\begin{coro}
\label{coro:bk_ineq_for_arm_events}
Let $r > 0$ and $l_2 > l_1 > 0$. For any $k \in \NN$ we have
\begin{equation*}
\PP(\bigcirc_{i=1}^{k} \Arm_r(l_1, l_2))
    \le \PP(\Arm_r(l_1, l_2))^k.
\end{equation*}
\end{coro}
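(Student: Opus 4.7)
The plan is to proceed by induction on $k$. The base case $k = 1$ is trivial. For the inductive step, the key set-theoretic observation is the inclusion
\begin{equation*}
\bigcirc_{i=1}^{k} \Arm_r(l_1,l_2) \subset \Bigl(\bigcirc_{i=1}^{k-1} \Arm_r(l_1,l_2)\Bigr) \circ \Arm_r(l_1,l_2),
\end{equation*}
which follows by grouping pairwise disjoint witnesses $U_1,\ldots,U_k$ for the left-hand side into $U := \bigcup_{i<k} U_i$ and $W := U_k$, and observing that any $\omega' \in [\omega_{\Vert U}]$ agrees with $\omega$ on each $U_i$ individually. Once this inclusion is in hand, Lemma~\ref{lema:bk_ineq} applied to the pair $(E, \Arm_r(l_1,l_2))$ with $E := \bigcirc_{i=1}^{k-1} \Arm_r(l_1,l_2)$, combined with the induction hypothesis, yields the stated bound.

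The application of Lemma~\ref{lema:bk_ineq} requires verifying that $E$ is an increasing event depending only on sticks intersecting $B(l_2)$. Increasingness is direct: if $\omega \subset \omega'$ and $\omega \in E$ has witnesses $(U_i)_{i<k}$, then $\omega_{\Vert U_i} \in \Arm_r(l_1,l_2)$ (obtained by testing the configuration $\omega_{\Vert U_i}$ itself against $[\omega_{\Vert U_i}] \subset \Arm_r(l_1,l_2)$), and any $\omega^* \in [\omega'_{\Vert U_i}]$ satisfies $\omega^* \supset \omega_{\Vert U_i}$, so monotonicity of $\Arm_r(l_1,l_2)$ gives $\omega^* \in \Arm_r(l_1,l_2)$, i.e., the same witnesses certify $\omega' \in E$.

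The main obstacle is verifying that $E$ depends only on sticks intersecting $B(l_2)$, since the individual witnesses $U_i$ need not lie inside $B(l_2)$. I would address this via the same approximation device as in the proof of Lemma~\ref{lema:bk_ineq}: replace each $\Arm_r(l_1,l_2)$ by its centered approximation $\Arm_{r,m}(l_1,l_2) := \{\xi;\ \xi_{B(l_2)\Vert B(m)} \in \Arm_r(l_1,l_2)\}$, which is increasing and centered on $B(m)$. For events centered on $B(m)$, witnesses may always be restricted to $B(m)$, and one checks inductively that this property is preserved by iterated disjoint occurrence, so BK iterates cleanly to give $\PP(\bigcirc_{i=1}^{k} \Arm_{r,m}(l_1,l_2)) \le \PP(\Arm_{r,m}(l_1,l_2))^k$. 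Letting $m \to \infty$ and controlling the residual by the probability of $\{\exists s \in \xi;\ E_0(s) \cap B(l_2) \neq \varnothing,\ z \notin B(m)\}$, which vanishes as in~\eqref{eq:A_circ_B_subset_An_circ_Bn} by Lemma~\ref{lema:crossing_annulus}, then concludes.
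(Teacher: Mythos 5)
Your proposal is correct and follows essentially the same route as the paper, which simply observes that $\Arm_r(l_1,l_2)$ is increasing and depends only on sticks touching $B(l_2)$, that disjoint occurrence preserves increasingness, and then iterates Lemma~\ref{lema:bk_ineq}. The extra care you take — the explicit inclusion $\bigcirc_{i=1}^{k}A \subset (\bigcirc_{i=1}^{k-1}A)\circ A$ and the localization of witnesses via centered approximations so that the $(k-1)$-fold disjoint occurrence still satisfies the hypotheses of Lemma~\ref{lema:bk_ineq} — fills in details the paper leaves implicit (one can alternatively check directly, using increasingness, that $\bigcirc_{i=1}^{k-1}\Arm_r(l_1,l_2)$ depends only on sticks intersecting $B(l_2)$), but the underlying argument is the same.
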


\subsection{Proof of Hypothesis \textbf{H1}}
\label{sub:proof_of_hypothesis_h1}

We now have all the tools we need to prove hypothesis \textbf{H1} for our
sequence of exploration paths.
\begin{defi}
For any $r>0$ and $l\geq 0$, define the $k$-arm events for
exploration path $\gamma_r^0$ as
\begin{equation*}
k\text{-}\EArm_{r}(z; l_1, l_2)
    = \{D(z; l_1, l_2) \
    \text{is traversed by $k$ separate segments of $\gamma_r^0$} \}.
\end{equation*}
\end{defi}

We want to prove for all $k \in \NN$ and $0 < l_1 < l_2$
we have uniformly in $r > 0$ and $z \in \RR^2$ that
\begin{equation}
\label{eq:hypothesis_H1_IPS_recall}%
\PP(k\text{-}\EArm_{r}(z; l_1, l_2))
    \le K_k \cdot \Bigl(\frac{l_1}{l_2}\Bigr)^{c(u)k}
\end{equation}
for positive constants $K_k(u) < \infty$ and $c(u) > 0$.

Firstly, notice that the existence of $k$ exploration arms in $D(z; l_1, l_2)$
implies the existence of at least $\lfloor k/2 \rfloor$ entering exploration
arms. If they did not share any sticks, the proof would be a trivial
application of BK inequality, but \textbf{P3} from
Lemma~\ref{lema:topological_properties} shows that some sticks can be shared.
Fortunately, by Proposition~\ref{prop:sticks_cap_boundary_ball} we know that
\begin{equation*}
\mu_2(s; \# E_0(s) \cap \partial B(l) = 2) = 2\pi < \infty,
\end{equation*}
that is, the measure of all sticks (in full IPS) that intersect the boundary
of $B(l)$ twice is finite. 

\begin{proof}[Proof of Theorem \ref{teo:hypothesis_H1_IPS}]
To prove~\eqref{eq:hypothesis_H1_IPS_recall} we notice that by
scale-invariance
\begin{equation}
\label{eq:exparm_r_to_exparm_1_relation}
\PP(k\text{-}\EArm_r(l_1,l_2))
    = \PP\bigl(k\text{-} \EArm_1( \tfrac{l_1}{r}, \tfrac{l_2}{r})\bigr)
    = \PP\bigl(k\text{-}\EArm_{\frac{r}{l_1}}(1,\tfrac{l_2}{l_1})\bigr),
\end{equation}
so it suffices to prove that there is a constant $c(u) > 0$ so that
for any $m \in \NN$
\begin{equation}
    \sup_{r \in (0,1]}\PP(k\text{-}\EArm_{r}(1,2^m)) \le K_k e^{- c m k}.
\end{equation}

Consider the circles $\partial B(2^j)$ for $j = 1, \ldots, m$ and let $N_r(j)$
denote the number of sticks with radius greater than $r$ that intersect
$\partial\! B(2^j)$ twice.\! By our standard coupling, we have that
$(N_r(1),\! \ldots, N_r(m))$ is stochastically smaller than
$(N_0(1), \ldots, N_0(m))$.  Moreover, by
Proposition~\ref{prop:sticks_cap_boundary_ball} we
know $N_0(j)$ has distribution $\Poi(2\pi u)$ for every $j$.
Define
\begin{equation*}
J = J_{r,k}
    := \min\bigl\{j \in \NN; N_r(j) \le \tfrac{k}{4}\bigr\},
\end{equation*}
with $\min \varnothing = \infty$. On event $\{k\text{-}\EArm_{r}(1,2^m), J = j\}$
with $j < m$ we have also
$\{k\text{-}\EArm_{r}(2^j,2^m),\break N_r(j) \le \frac{k}{4}\}$.
Notice that on this event we can deduce that there are a lot of disjoint
arms:

\begin{itemize}
\item We have at least $\lfloor \frac{k}{2} \rfloor$ entering exploration arms.
    By \textbf{P3} in Lemma~\ref{lema:topological_properties}, each stick
    intersecting $\partial B(2^j)$ can participate in at most 2 entering
    exploration arms. If for each stick that participates in two entering
    exploration arms we discard one of the entering arms arbitrarily, we
    discard at most $k/4$ entering arms in total, so we have at least
    $\lfloor \frac{k}{2} \rfloor - \frac{k}{4} > \frac{k}{4} - 1$
    entering arms from $\partial B(2^m)$ to $\partial B(2^j)$ that do not
    share any sticks.

\item An entering arm may walk over the bottom side of our box $B$ and such an
    arm does not make a covered 1-arm of sticks connecting the internal and
    external boundaries of $D(z;2^j, 2^m)$. However, the same reasoning in
    \textbf{P1} in Lemma~\ref{lema:topological_properties}
    shows us that at most one entering arm of $D(z;2^j, 2^m)$ can use the
    boundary of $B$. Thus, we have at least $\frac{k}{4} - 2$ disjoint
    stick arms in $D(2^j, 2^m)$.
\end{itemize}

By this, we can conclude using BK inequality in
Corollary~\ref{coro:bk_ineq_for_arm_events} that for $k \geq 6$ one has
\begin{align*}
\PP&\bigl( k\text{-}\EArm_{r}(1,2^m), J \le \lfloor m/2 \rfloor \bigr) \\
    &= \sum_{j=1}^{\lfloor m/2 \rfloor}
        \PP\bigl(k\text{-}\EArm_{r}(1,2^m), J = j\bigr)
    \le \sum_{j=1}^{\lfloor m/2 \rfloor}
        \PP\Bigl( 
        \bigcirc_{i=1}^{\bigl\lceil \tfrac{k}{4} - 2\bigr\rceil} \Arm_{r}(2^j,2^m) 
        \Bigr) \\
    &\le \Bigl\lfloor\frac{m}{2}\Bigr\rfloor \!\cdot 
        \PP\bigl(
        \Arm_r( 2^{\lfloor m/2\rfloor}, 2^m)
        \bigr)^{\bigl\lceil \tfrac{k}{4} - 2\bigr\rceil} 
    \le \Bigl\lfloor\frac{m}{2}\Bigr\rfloor
        \bigl( K e^{- c m} \bigr)^{\bigl\lceil \tfrac{k}{4} - 2\bigr\rceil}
    \le K_k \cdot e^{-c m k},
\end{align*}
where the penultimate inequality comes from
Proposition~\ref{prop:up_bound_arm_IPS}. On the other hand, we have
\begin{equation*}
\PP\bigl(k\text{-}\EArm_{r}(1,2^m), J > \lfloor m/2 \rfloor \bigr)
    \le \PP\Bigl(\bigcap_{j \in [m/2]} N_r(j) \geq k/4 \Bigr)
    \le \PP\Bigl(\bigcap_{j \in [m/2]} N_0(j) \geq k/4\Bigr)
\end{equation*}
by the stochastic domination we just described. To finish the proof it
suffices to show
\begin{lema}
\label{lema:hypothesis_H1_part2}
There is $c = c(u) > 0$ such that for every $k\in \NN$ there is
$K_k = K_k(u) > 0$ with
\begin{equation}
\label{eq:hypothesis_H1_part2}
\PP\Bigl( \smash{\bigcap_{j \in [m]}} N_0(j) \geq k \Bigr)
    \le K_k \cdot e^{-c mk}, \quad \text{for every $m \geq 1$.}
\end{equation}
\end{lema}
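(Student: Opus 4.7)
The plan is to combine a multivariate Chernoff bound on a sparse subset of indices with the geometric decay of the $\mu_2$-measure of sticks crossing many concentric circles. The key preliminary is the explicit formula: for $j_1 < j_2 < \cdots < j_{k'}$ one has
\begin{equation*}
\mu_2\bigl(s : \{j_1, \ldots, j_{k'}\} \subset J(s)\bigr) = 4 \arcsin\bigl(2^{j_1 - j_{k'}}\bigr),
\end{equation*}
where $J(s) := \{j \in [m] : \#(E_0(s) \cap \partial B(2^j)) = 2\}$. The derivation parallels~\eqref{eq:sticks_cap_boundary_ball_twice}: parametrizing the stick by $(c, d, V)$, the conditions that $s$ crosses every $\partial B(2^{j_i})$ twice reduce to chord conditions involving only the innermost and outermost radii $2^{j_1}$ and $2^{j_{k'}}$. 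This yields the geometric decay bound $\leq 2\pi \cdot 2^{-(j_{k'} - j_1)}$.

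Fix $L \in \NN$ large (depending on $u$) and restrict to the sparse subset $S := L\NN \cap [m]$ of size $t := |S| \geq m/L$. On $\bigcap_{j \in [m]} \{N_0(j) \geq k\}$ one has $\sum_{j \in S} N_0(j) \geq k t$, so by Markov's inequality and Campbell's formula applied to the PPP $\xi$ of intensity $u \mu_2$,
\begin{equation*}
\PP\Bigl(\bigcap_{j \in [m]} N_0(j) \geq k\Bigr) \leq \exp\Bigl(-\theta k t + u \int (e^{\theta |J(s) \cap S|} - 1)\, d\mu_2(s)\Bigr), \quad \theta > 0.
\end{equation*}
Using that the number of $k'$-element subsets of $S$ with $j_{k'} - j_1 = l L$ is at most $t \binom{l-1}{k'-2}$, together with the identity $\sum_{l \geq 0} \binom{k'-2+l}{k'-2} x^l = (1-x)^{-(k'-1)}$, the displayed measure formula gives $\mu_2(|J(s) \cap S| \geq k') \leq 2\pi t (2^L - 1)^{-(k'-1)}$, so that the MGF integral is bounded by $2\pi t (e^\theta - 1)(2^L - 1)/(2^L - 1 - e^\theta)$ whenever $e^\theta < 2^L - 1$.

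Choosing $L$ so that $2^L \geq 2 e^\theta + 1$ and taking $\theta = \log 2$ gives $\PP \leq \exp(t[-k \log 2 + 4\pi u])$, which yields the desired bound $e^{-c k m / L}$ for $k \geq k_0(u) := 8\pi u / \log 2$. For the remaining range $k \leq k_0(u)$ the Chernoff bound on the sum is too weak, since $\EE[\sum_{j \in S} N_0(j)] = 2\pi u t$ is of the same order as $k t$; here I would decompose $N_0(j) = Y_j + Z_j$ where $Y_j$ counts sticks of scale $i$ with $|i - j| \leq L$ crossing $\partial B(2^j)$ twice. The $\{Y_j\}_{j \in S}$ depend on disjoint scale strata of $\xi$ and are therefore independent Poisson with rate $\lambda_L \leq 2\pi u$, while the $Z_j$ have mean $O(u \cdot 2^{-L})$ by scale invariance and the decay $\mu_2(R \in (2^{h-1}, 2^h], \text{ crosses } \partial B(1) \text{ twice}) = O(2^{-|h|})$. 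Using $\{N_0(j) \geq k\} \subseteq \{Y_j \geq \lceil k/2 \rceil\} \cup \{Z_j \geq \lceil k/2 \rceil\}$, the independent-$Y_j$ alternative yields a factor $(1 - e^{-\lambda_L})^{t/2}$, and the $Z_j$ alternative is again controlled by a Chernoff estimate on $\sum_{j \in S} Z_j$. The main obstacle is matching both regimes with a single exponential rate $c(u) > 0$, which requires a careful choice of $L = L(u)$ and tracking how the $e^{c k t}$ prefactors arising from Poisson tail bounds are absorbed into $K_k$ at the crossover $k \asymp u$.
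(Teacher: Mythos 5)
Your large-$k$ argument is correct and is essentially the paper's: the paper also reduces to a Chernoff bound on $\sum_j N_0(j)$ via the Laplace functional of the Poisson process, except that it works on the full index set $[m]$ with $\theta<\log 2$ (using $\PP(X_l=n)\lesssim 2^{-n}$ from Lemma~\ref{lema:crossing_annulus}) instead of sparsifying to $S=L\NN\cap[m]$. Your explicit formula $\mu_2\bigl(\{j_1,\dots,j_{k'}\}\subset J(s)\bigr)=4\arcsin(2^{j_1-j_{k'}})$ is correct (the set $J(s)$ is an index interval because $t\mapsto|f(t)|$ is unimodal along a segment, so the condition reduces to the two extreme radii, and the integral $\int_{-\epsilon}^{\epsilon}\int_{\sqrt{1-y^2}}^{\infty}2R^{-3}(2R-2\sqrt{1-y^2})\,\mathrm{d}R\,\mathrm{d}y=4\arcsin\epsilon$ checks out, consistent with the value $2\pi$ at $\epsilon=1$ in Proposition~\ref{prop:sticks_cap_boundary_ball}); it is a nice refinement of the paper's cruder bound. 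Modulo the minor point that the spacing must be $2L+1$ for your later strata to be disjoint, this half is complete.

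The genuine gap is in the regime $k<k_0(u)$, which you correctly identify as the hard case but do not close. Two remarks. First, the ``crossover'' you worry about is a non-issue: it suffices to prove the bound for $k=1$ with some rate $c(u)$, since then for every $1\le k<k_0(u)$ monotonicity gives $\PP(\cap_j\{N_0(j)\ge k\})\le\PP(\cap_j\{N_0(j)\ge1\})\le Ke^{-cm}\le Ke^{-(c/k_0(u))\,mk}$, and one takes the minimum of the two rates; this is exactly what the paper does. Second, and more seriously, the step ``the independent-$Y_j$ alternative yields a factor $(1-e^{-\lambda_L})^{t/2}$'' hides a union bound over the $\binom{t}{\lceil t/2\rceil}$ choices of which half of the indices realize the $Y$-alternative. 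Since all you know is $\PP(Y_j\ge1)=1-e^{-\lambda_L}\le 1-e^{-2\pi u}$, the resulting bound $\binom{t}{t/2}(1-e^{-2\pi u})^{t/2}$ is only exponentially small when $4(1-e^{-2\pi u})<1$, i.e.\ for small $u$, whereas the lemma must hold for all $u\in(0,\bar u)$. This is fixable within your framework: the $Y$-field and the $Z$-field live on disjoint radius strata and are therefore independent, so you can condition on the $Z$'s and write $\PP(\cap_{j\in S}\{Y_j+Z_j\ge1\})\le\EE_Z\bigl[(1-e^{-2\pi u})^{\#\{j:Z_j=0\}}\bigr]\le(1-e^{-2\pi u})^{t/2}+\PP(\sum_{j\in S}Z_j\ge t/2)$, with the last term controlled by your Campbell--Chernoff estimate since $u\int(e^{\theta g(s)}-1)\,\mathrm{d}\mu_2\le C(u)\,t\,2^{-L}$ for $\theta=\log2$ and $L=L(u)$ large. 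For comparison, the paper avoids this entirely by reusing the adaptive decoupling machinery from Proposition~\ref{prop:up_bound_arm_IPS}: it applies the conditioning argument of Lemma~\ref{lema:up_bound_cap_random_vacant_circuits} to the increasing events $\{N_0(I_j)\ge1\}$ along the random sequence of indices $I_j$, which manufactures the needed independence without any radius stratification. Your stratification route is more elementary and self-contained once repaired, but as written the $k<k_0(u)$ case is not proved.
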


\begin{proof}
Let us define
$\Lambda_m
:= \bigl\{s;\; \text{$\# E_0(s) \cap \partial B(2^j)=2$ for some
$j \in [m]$}\bigr\}$ and $\lambda_m := \mu_2(\Lambda_m)$.
By union bound, we know that $\lambda_m \le 2\pi m$. Also, define
\begin{align*}
\Lambda_m(i,j)
  &:= \bigl\{s; \# E_0(s) \cap \partial B(2^l) = 2 \ \text{iff}
    \ l \in [i,j]\bigr\}.
\end{align*}
and notice that the sets $\Lambda_m(i,j)$ with $1 \le i \le j \le m$
form a partition of $\Lambda_m$.

The number of sticks in $\supp \xi_0 \cap \Lambda_m$ is a random variable $M$
with distribution $\Poi(u\lambda_m)$. After checking the number of sticks
in $\Lambda_m$, we want to know for each stick $s_l$  the number circles
$\partial B(2^j)$ with $j \in [m]$ it has crossed twice, which we denote by
$X_l$. Since we are working with a PPP, each $s_l$ has, independently of the
others, probability $\frac{\mu_2(\Lambda_m(i,j))}{\mu_2(\Lambda_m)}$ to be in
$\Lambda_m(i,j)$, and $X_l$ are iid. random variables. Using the bound
of~\eqref{eq:crossing_annulus_simpler} on
Lemma~\ref{lema:crossing_annulus} we get
\begin{align}
\PP(X_l = n)
    &= \sum_{i=1}^{m - n +1} \PP(s_l \in \Lambda_m(i, i + n - 1))
    = \sum_{i=1}^{m - n +1}
    \frac{\mu_2(\Lambda_m(i, i - n + 1))}{\mu_2(\Lambda_m)} \nonumber\\
    &\le \sum_{i=1}^{m - n +1}
    \frac{
        \mu_2(s; E_0(s) \cap \partial B(2^i) \neq \varnothing,
        E_0(s) \cap \partial B(2^{i - n + 1}) \neq \varnothing)
    }{
        \mu_2(\Lambda_m)
    }
    \le \frac{mc}{\lambda_m} \cdot 2^{-n}.
\label{eq:bound_unif_stick_intersect_n_circles}
\end{align}

Fixing any $\theta \in (0, \log 2)$ we have that
\begin{align*}
\PP\Bigl( \bigcap_{j \in [m]} N_0(j) \geq k\Bigr)
    &\le \PP\Bigl( \sum_{j=1}^{m} N_0(j) \geq mk \Bigr)
    = \PP\Bigl( \sum_{l=1}^{M} X_l \geq mk \Bigr)
    \le e^{- \theta mk} \cdot \EE\bigl[ e^{ \theta \sum_{l=1}^{M} X_l } \bigr] \\
    &= \exp\bigl[- \theta mk + u\lambda_m(\EE[e^{\theta X_l}] - 1)\bigr],
\end{align*}
by Markov inequality and the independence of $M$ and random variables $X_l$.
Estimating $\EE[e^{\theta X_l}]$
with~\eqref{eq:bound_unif_stick_intersect_n_circles}, we get
\begin{equation*}
\EE[e^{\theta X_l}]
    = \sum_{n=1}^m e^{\theta n} \PP[X_l = n]
    \le \frac{cm}{\lambda_m} \sum_{n=1}^{m}\exp[(\theta - \log 2)n ]
    \le \frac{c_\theta m}{\lambda_m}
\end{equation*}
and hence
\begin{equation*}
\PP\bigl( N_0(j) \geq k, j \in [m]\bigr)
    \le \exp\bigl[- \theta mk + uc_\theta m - u\lambda_m \bigr]
    \le \exp\Bigl[ - \Bigl(\theta - \frac{uc_\theta}{k}\Bigr) mk \Bigr].
\end{equation*}

Taking $k_0(u) = \lceil 2\frac{uc_\theta}{\theta}\rceil$, we have for
$k \geq k_0(u)$ that
$\theta - \frac{uc_\theta}{k} \geq \theta - \frac{uc_\theta}{k_0(u)}
\geq \frac{\theta}{2} > 0$.

To extend our bounds for $k < k_0(u)$, we prove it first for $k = 1$. Recall
that starting from random variables $D_j$ in \eqref{eq:def_Drj} we defined
$I_{0} := m$, $I_{j} := D_{I_{j-1}}$ and
$T := \max\{j; I_{j} \geq 1\}$. Replicating the argument in
Lemma~\ref{lema:up_bound_cap_random_vacant_circuits} for events
$\{ N_0(I_{j}) \geq 1\}$ instead of $\vCirc_0(A_{I_j})^{\comp}$
(notice that both are increasing), we get
\begin{align*}
\PP\bigl( N_0(j) \geq 1, j \in [m]\bigr)
    &\le \PP\bigl(T < \lfloor \kappa m \rfloor\bigr) +
        \PP\bigl(T \geq \lfloor \kappa m \rfloor,
        \cap_{j=1}^{\lfloor \kappa m \rfloor} N_0(I_{j}) \geq 1
        \bigr) \\
    &\le \exp[- c(u) \cdot m] +
        (1 - \exp[-u2\pi])^{\lfloor \kappa m \rfloor} \\
    &\le 2 e^{- c(u) \cdot m}.
\end{align*}
Thus, for any $k < k_0(u)$ we have
\begin{align*}
\PP\bigl( N_0(j) \geq k, j \in [m]\bigr)
    \le \PP\bigl( N_0(j) \geq 1, j \in [m]\bigr)
    \le 2 e^{-c(u) \cdot m}
    \le 2 e^{ -\frac{c(u)}{k_0(u)} mk }
\end{align*}
and we can pick constants $c(u), K_k(u) > 0$ that
verify~\eqref{eq:hypothesis_H1_part2} for all $k \in \NN$.
\end{proof}

We conclude~\eqref{eq:hypothesis_H1_IPS_recall} holds, finishing
the proof of Theorem~\ref{teo:hypothesis_H1_IPS}.
\end{proof}

As mentioned in the Introduction, 
Theorem~\ref{teo:tight_regularity_exploration_paths_2} is immediate from
the theory developed in~\cite{aizenman1999holder}, once we have
Theorem~\ref{teo:hypothesis_H1_IPS}. The lower bound on the Hausdorff dimension
of limiting paths follows from Proposition~\ref{prop:property_varnothing}.

\section*{Acknowledgments}

The authors would like to thank an anonimous referee for his careful reading.
In particular, for pointing out reference~\cite{basdevant} which led to the
proof of Proposition~\ref{prop:property_varnothing}. Most of this work was
developed in DU Phd Thesis at IMPA.  The research of D.U. had financial
support by FAPERJ, grant 202.231/2015, and FAPESP, grant 2020/05555-4. A.T. was
supported by grants “Projeto Universal” (406250/2016- 2) and “Produtividade em
Pesquisa” (304437/2018-2) from CNPq and “Jovem Cientista do Nosso Estado”,
(202.716/2018) from FAPERJ.


\bibliographystyle{plain} 
\bibliography{ips_interface.bib}      

\end{document}